\numberwithin{equation}{section}
\newtheorem{theorem}{Theorem}[section]
\newtheorem{lemma}[theorem]{Lemma}
\theoremstyle{definition}
\newtheorem{remark}[theorem]{Remark}
\theoremstyle{definition}
\newtheorem{definition}[theorem]{Definition}
\theoremstyle{definition}
\newtheorem{assumption}[theorem]{Assumption}
\def\dashint{\operatorname%
{\,\,\text{\bf--}\kern-.98em\DOTSI\intop\ilimits@\!\!}}
\newcommand{\vertiii}[1]{{\left\vert\kern-0.25ex\left\vert\kern-0.25ex\left\vert #1 
    \right\vert\kern-0.25ex\right\vert\kern-0.25ex\right\vert}}
\def\.5{\frac{1}{2}}
\def\bR{\mathbb{R}}
\def\cB{\mathcal{B}}
\def\cD{\mathcal{D}}
\def\cF{\mathcal{F}}
\def\cP{\mathcal{P}}
\def\cM{\mathcal{M}}
\def\cQ{\mathcal{Q}}
\def\cS{\mathcal{S}}
\def\cU{\mathcal{U}}
\def\cV{\mathcal{V}}
\def\cW{\mathcal{W}}
\def\cU{\mathcal{U}}
\begin{document}
\title[Parabolic equations with a half-time derivative]{$L_p$-estimates for parabolic equations in divergence form with a half-time derivative}

\author[P. Jung]{Pilgyu Jung}
\address[P. Jung]{Department of Mathematics, Sungkyunkwan University, 2066 Seobu-ro, Jangan-gu, Suwon-si, Gyeonggi-do, 16419, Republic of Korea}

\email{\href{mailto:pilgyujung@skku.edu}{\nolinkurl{pilgyujung@skku.edu}}}


\author[D. Kim]{Doyoon Kim}
\address[D. Kim]{Department of Mathematics, Korea University, 145 Anam-ro, Seongbuk-gu, Seoul, 02841, Republic of Korea}

\email{\href{mailto:doyoon_kim@korea.ac.kr}{\nolinkurl{doyoon_kim@korea.ac.kr}}}


\subjclass[2020]{35K10, 35B65, 35R05}

\keywords{parabolic equations, half-time derivative,  $L_p$ theory, measurable coefficients, small mean oscillations}

\begin{abstract}
We establish the unique solvability of solutions in Sobolev spaces to linear parabolic equations in a more general form than those in the literature.
A distinguishing feature of our equations is the inclusion of a half-order time derivative term on their right-hand side.
We anticipate that such equations will prove useful in various problems involving time evolution terms.
Notably, the coefficients of the equations exhibit significant irregularity, being merely measurable with respect to the temporal variable or one spatial variable.
\end{abstract}

\maketitle

\section{Introduction}

This paper is devoted to studying  $L_p$ theory of second-order parabolic equations of the following form:
\begin{equation}
							\label{intro_maineq}
u_t-D_i(a_{ij}D_j u)+\lambda u= D_t^{1/2}h + D_ig_i+f,
\end{equation}
defined in $\bR \times \bR^d$, where $D_t^{1/2}h$ is the half-time derivative of $h$; for the definition, see \eqref{closed form} and Definition \ref{weak form}.
As one may notice, the right-hand side of \eqref{intro_maineq} includes the term $D_t^{1/2}h$, whereas the usual parabolic equations in divergence form considered in the literature (see, for instance, \cite{MR0241822, MR1465184, MR2304157, MR2764911}) are of the form:
    \begin{equation}
        \label{intro_classiceqn}
    u_t-D_i(a_{ij}D_j u)+\lambda u=D_ig_i+f.       
    \end{equation}
Thus, the emphasis of this paper is on whether there exists a unique solvability for parabolic equations with a half-order time derivative on the right-hand side when the solution spaces are Sobolev spaces.
In particular, we establish the following $L_p$-estimate:
\begin{equation}
							\label{eq0701_02}
\|D_t^{1/2}u\|_p + \|Du\|_p + \sqrt{\lambda} \|u\|_p \lesssim \|h\|_p + \|g\|_p + \frac{\|f\|_p}{\sqrt{\lambda}},
\end{equation}
where $\|\cdot\|_p = \|\cdot\|_{L_p(\bR \times \bR^d)}$ with $p \in (1,\infty)$ under appropriate assumptions on $a_{ij}$.
Recall that the usual $L_p$-estimate for \eqref{intro_classiceqn} is
    \begin{equation*}
        \|Du\|_p+\sqrt{\lambda }\|u\|_p \lesssim \|g\|_p+\frac{\|f\|_p}{\sqrt{\lambda}}.
    \end{equation*}
Upon observing the term $\|D_t^{1/2}u\|_p$ on the left-hand side of the estimate \eqref{eq0701_02}, even when $h=0$, it becomes evident that our estimate provides more sophisticated information about solutions to parabolic equations.

To the best of our knowledge, the term $D_t^{1/2}h$ was first considered in the work of Kaplan \cite{MR0200593}, where the author introduced an expression equivalent to $D_t^{1/2}h$ to address a boundary value problem for parabolic equations.
More precisely,
given that the $L_2$-solvability of equations as in \eqref{intro_classiceqn} with {\em homogeneous initial and lateral boundary conditions} is well-established, Kaplan considered parabolic equations with an inhomogeneous lateral boundary condition $v$ on $(0,T) \times \partial\Omega$, where $v$ is defined on $[0,T] \times \overline{\Omega}$.
If $\tilde{u}$ is a desired solution to the equation
    \begin{equation*}
    \tilde{u}_t-D_i(a_{ij}D_j \tilde{u})+\lambda \tilde{u}=D_i\tilde{g_i}+\tilde{f}
    \end{equation*}
in $(0,T) \times \Omega$ with the inhomogeneous lateral boundary condition $\tilde{u} = v$ on $(0,T) \times \partial\Omega$ (and with the zero initial condition), then $u := \tilde{u} - v$ satisfies
\begin{equation}
							\label{eq0701_03}
u_t-D_i(a_{ij}D_j u)+\lambda u=-v_t+D_ig_i+f
\end{equation}
in $(0,T) \times \Omega$ with $u = 0$ on $\left(\{0\}\times \Omega\right) \cup \left((0,T) \times \partial\Omega\right)$ (i.e., homogeneous boundary condition),
where
    \[
    g_i=\tilde{g_i}+a_{ij}D_j v,\quad f=\tilde{f}-\lambda v.
    \]
Then, one can apply the already established $L_2$-solvability for equations with {\em homogeneous boundary conditions} to the equation \eqref{eq0701_03} to find $u$ (and then $\tilde{u}$), provided that $v_t$ belongs to $L_2((0,T)\times \Omega)$ or to the space $L_2((0,T); V')$, where $V'$ is the dual of $V := H^1_0(\Omega)$.
However, unless $v$ is sufficiently smooth, $v_t$ does not belong to $L_2\left((0,T) \times \Omega\right)$.
Instead of investigating if $v_t$ belongs to $L_2((0,T); V')$, in \cite{MR0200593} Kaplan treated the term $v_t$, from our perspective, as $D_t^{1/2}h$, where $h \in L_2\left((0,T) \times \Omega\right)$.
This transformation turned the equation \eqref{eq0701_03} into \eqref{intro_maineq} with homogeneous boundary conditions.
Then, using the Lax-Milgram theorem, not the Galerkin method usually employed for parabolic equations, Kaplan proved the unique solvability of solutions to \eqref{intro_maineq} in the $L_2$-space setting.
In particular, the following $L_2$-estimate is proved.
    \begin{equation*}
         \|D_t^{1/2}u\|_2+\|Du\|_2+\sqrt{\lambda }\|u\|_2
         \lesssim\|h\|_2+ \|g\|_2+\frac{\|f\|_2}{\sqrt{\lambda}},
    \end{equation*}
where $\|\cdot\|_2 = \|\cdot\|_{L_2}$.

Half-time derivatives of solutions to parabolic equations are also considered in \cite{MR0241822}, where, although the authors did not consider equations as in \eqref{intro_maineq}, they proved that solutions to the usual parabolic equations as in \eqref{intro_classiceqn} belong to $W_2^{1/2,1}\left((0,T) \times \Omega \right)$, an element of which, when the domain is $\bR \times \Omega$, has a finite integral
\[
\vertiii{u}= \left(\int_0^\infty \ell^{-2} \|u(\cdot+\ell,\cdot) - u(\cdot,\cdot)\|_{L_2(\bR\times\Omega)}^2 \, d\ell\right)^{1/2}.
\]
In this case, one can verify that $\vertiii{u}$ is equivalent to $\|D_t^{1/2}u\|_{L_2(\bR \times \Omega)}$.

After a long period since the appearance of the works \cite{MR0200593, MR0241822}, there has been a renewed exploration of half-time derivatives of solutions to parabolic equations, along with Kaplan's argument of using the Lax-Milgram theorem for the $L_2$-solvability of parabolic equations instead of the Galerkin method.
Thus, half-time derivatives of solutions are now being examined in various contexts related to parabolic equations.
For instance, Kaplan's argument was used in \cite{MR4127944} for the $L_2$-solvability of boundary value problems for homogeneous parabolic systems, and in \cite{MR3538523, MR3707301} for the maximal regularity of non-autonomous parabolic equations.
See also \cite{MR3906170}, where the authors utilized half-time derivatives to establish regularity properties (including the self-improvement of the integrability of spatial gradients) of weak solutions to parabolic systems, and \cite{dindovs2023regularity}, where the author investigated half-time derivatives in proving regularity results for solutions to boundary value problems of homogeneous parabolic equations (having a drift term) with a boundary condition whose half-time derivative is in $L_p$ on the lateral boundary.

Regarding $L_p$ theory for \eqref{intro_maineq} together with estimates as in \eqref{eq0701_02}, as mentioned earlier, the results are established in \cite{MR0200593} for $p=2$.
However, for general $p\in (1,\infty)$ and $h\neq0$, with appropriate regularity assumptions on the leading coefficients $a_{ij}$, there do not seem to be results available.
In this paper, we provide results concerning \eqref{intro_maineq} that correspond to the $L_p$ theory of the usual parabolic equations \eqref{intro_classiceqn}.

Our motivation for dealing with equations as in \eqref{intro_maineq} within the framework of $L_p$ spaces is twofold: first, as hinted above, to generalize the known $L_2$ theory to $L_p$ theory for all $p \in (1,\infty)$, and, second, to deepen our understanding of solutions to the usual parabolic equations as in \eqref{intro_classiceqn}.
Concerning the latter, insights into half-time derivatives of solutions to parabolic equations can be applied to various contexts.
For instance, when establishing the unique $L_p$ solvability of the usual parabolic equations as in \eqref{intro_classiceqn} in $(0,T) \times \Omega$, where $\Omega$ has a very irregular boundary $\partial\Omega$, one may confront equations as in \eqref{intro_classiceqn} with $u_tI_{\Omega^*}$ on the right-hand side, where $\Omega^* \subset \Omega$.
By the well-known theory for parabolic equations, one can say that, if the lateral boundary condition is zero, $u_t \in L_p\left((0,T); H_p^{-1}(\Omega)\right)$, where $H_p^{-1}(\Omega)$ is the dual of $\mathring{W}_p^1(\Omega)$.
However, it is not clear if $u_tI_{\Omega^*}$ belongs to the same space.
As seen in \eqref{eq0523_01} and Lemma \ref{lem0514_1}, if $H$ denotes the Hilbert transform with respect to time (see Section \ref{sec01}), we may write $u_t = D_t^{1/2}h$, where $h = - H(D_t^{1/2}u)$, provided that $D_t^{1/2}u \in L_p$.
Then, we obtain equations as \eqref{intro_maineq}.
More precisely, we have
\[
u_t-D_i(a_{ij}D_j u)+\lambda u= D_t^{1/2}(h I_{\Omega^*}) + D_ig_i+f,
\]
and, if the volume of $\Omega^*$ is sufficiently small, 
the $L_p$-norm of the term $h I_{\Omega^*}$, which is equivalent to that of $D_t^{1/2}u I_{\Omega^*}$, can be eventually absorbed into the left-hand side of the estimates as in \eqref{eq0701_02} by using the smallness of the volume of $\Omega^*$.
This shows that establishing $L_p$ theory for equations as in \eqref{intro_maineq} can be highly beneficial in dealing with the usual parabolic equations.
For the appearance of the term $u_t I_{\Omega^*}$ on the right-hand side of parabolic equations, see \cite[Proof of Proposition 5.1]{MR4387198} and \cite[Proof of Corollary 4.7]{MR2139880}, where $u_t$ is sufficiently smooth because the coefficients are constant.
However, in general, especially when $a_{ij}$ are merely measurable in time, one cannot expect $u_t$ to belong to any Lebesgue spaces.

We now comment on the conditions for the coefficients $a_{ij}$ in this paper.
It is worth noting that when $p=2$, as in \cite{MR0200593}, our main results (Theorems \ref{main whole space}, \ref{thm0521_1}, and also see Theorem \ref{Interior L_2 solvability}) hold without any regularity assumptions (under the ellipticity condition \eqref{eq0521_02}) on the leading coefficients $a_{ij}$.
However, for $p \neq 2$, as shown in \cite{MR3488249}, there exists a counterexample to the $L_p$ theory for the usual parabolic equations as in \eqref{intro_classiceqn}.
In fact, it is not possible for all of the coefficients $a_{ij}(t,x)$ to be merely measurable in both time and one spatial variable, i.e., to have no regularity conditions as functions of time and one spatial variable.
Thus, in this paper, we allow $a_{ij}(t,x)$ to be merely measurable in time (Theorem \ref{main whole space}) or to be merely measurable in one spatial variable, in particular, $x_1$ (Theorem \ref{thm0521_1}).
As functions of the remaining variables, we impose that $a_{ij}$ have small mean oscillations (see Assumptions \ref{assum coeffi} and \ref{assum0521_1}).
Indeed, for the usual parabolic equations \eqref{intro_classiceqn} it is possible to allow all $a_{ij}$ but one (for instance, $a_{11}$) to be merely measurable both in time and $x_1$ (see, for instance, \cite{MR2764911}), but in this paper, we do not pursue this direction.

As for the domain where \eqref{intro_maineq} is defined, we have $\bR \times \bR^d = \{(t,x): t \in \bR,\, x = (x_1,\ldots,x_d) \in \bR^d\}$.
For domains such as $\bR \times \Omega$, where $\Omega \subset \bR^d$, if $\partial\Omega$ and $a_{ij}$ are sufficiently smooth (for example, if $a_{ij}(t,x)$ have small mean oscillations as functions of $(t,x)$), the results (with the Dirichlet or conormal derivative boundary conditions) corresponding to the main results of this paper can be derived by the even or odd extension technique with Theorem \ref{thm0521_1}, where $a_{ij}$ are allowed to be merely measurable in one spatial direction.
For the even or odd extension technique, see, for instance, \cite{MR2764911}.
For domains having a finite time interval like $(0,T) \times \Omega$, because \eqref{intro_maineq} involves a non-local time derivative, we do not consider such domains in this paper, but they will be dealt with elsewhere.

We now briefly discuss how to prove our main results (Theorems \ref{main whole space} and \ref{thm0521_1}).
    Based on the mean oscillation argument in \cite{MR2304157}, our proof proceeds by first decomposing the solution $u$ of the equation locally into its homogeneous part and inhomogeneous part. It is critical to observe that, due to the non-local operator $D_t^{1/2}$ on the right-hand side, the equation must be analyzed on $\bR\times B_1$ rather than the standard cylinder $Q_1=(-1,0) \times B_1 = \{(t,x) \in \bR \times \bR^d: -1<t<0, |x| < 1\}$.
Thus, obtaining a local estimate that takes into account the non-local term on the right-hand side is crucial (see Lemma \ref{local L_2 estimate lemma}).
    After that, we derive a Lipshitz estimate for the homogeneous solution $v$ and $Dv$ to obtain the mean oscillation estimates of $u$ and $Du$.

    To derive the mean oscillation estimate of $D_t^{1/2} u$, we temporarily assume that $u$ is a solution of the heat equation, which means $D_i(a_{ij}D_j u)=\Delta u$. Then, we exploit the fact that $D_t^{1/2}v $ also serves as a solution to the equation for the homogeneous solution $v$ to get the mean oscillation estimate of $D_t^{1/2} u$. Finally, by appropriately manipulating the equation (see \eqref{manipulated eqn}), we obtain the desired estimates.
    
    The remaining part of the paper is organized as follows. We introduce some notation and state the main results in the next section. Section \ref{Auxiliary results} contains some auxiliary results.
In Section \ref{sec L2 estimates}, we establish an $L_2$-estimate and a local $L_2$-estimate.
Then, in Sections \ref{sec05} and \ref{sec06}, we obtain mean oscillation estimates when the coefficients $a_{ij}$ satisfy Assumption \ref{assum coeffi} and prove Theorem \ref{main whole space}. Finally, we prove Theorem \ref{thm0521_1} in Section \ref{sec x_1}.

\section{notation and main results}							\label{sec01}

Let $d\ge 1$ be a positive integer, and let $\Omega$ be a domain in $\bR^d$, where $\bR^d$ denotes the $d$-dimensional Euclidean space. A point in $\bR^d$ is written as $x = (x_1,x_2,\ldots,x_d) = (x_1,x')$, where $x' \in \bR^{d-1}$.
We write
\[
B_r(x)=\{y\in \bR^d:|x-y|<r\}, \quad B_r=B_r(0).
\]
For $X=(t,x)\in \bR^{d+1}$, set
\[
Q_{r,s}(X)=(t-r^2,t+r^2)\times B_s(x), 
\quad Q_r(X)=Q_{r,r}(X), 
 \quad Q_r=Q_r(0). 
\]
When $d \geq 2$, by $B_r'(x')$, $x' \in \bR^{d-1}$, we mean a ball with radius $r$ in $\bR^{d-1}$, i.e.,
\[
B_r'(x') = \{y' \in \bR^{d-1}: |x'-y'| < r\}.
\]
For parabolic cylinders with respect to $(t,x')$-variable, we use the notation
\[
Q'_r(t,x')=(t-r^2,t+r^2)\times B'_r(x') \subset \bR \times \bR^{d-1}.
\]
Let $\cD \subset \bR^{d+1}$.
We write $(u)_\cD$ to denote
\begin{equation}
							\label{eq0521_01}
(u)_\cD = \dashint_{\cD} u(t,x) \, dx \, dt = \dashint_{\cD} u(t,x) \, dX = \frac{1}{|\cD|}\int_{\cD} u(t,x) \, dX,
\end{equation}
where $|\cD|$ is the $(d+1)$-dimensional Lebesgue measure of $\cD$.
As above, we often use $dX$ to denote $dx\,dt$ or $dt \, dx$.
In a similar manner, we define the same averaging notation for integrals with respect to $x \in\bR^d$ or $(t,x') \in \bR \times \bR^{d-1}$.

Throughout the paper, unless explicitly stated, we set $\cQ = \bR \times \Omega$.
By $C^\infty(\cQ)$ ($C_0^\infty(\cQ)$) we refer to the collection of infinitely differentiable functions in $\cQ$ (with compact support in $\cQ$).

For $u \in L_p(\cQ)$, $1 < p < \infty$, let $H (u)$ be the Hilbert transform of $u$ with respect to time.
If $u \in C_0^\infty(\cQ)$, one can write
\[
H(u)(t,x) = \lim_{\varepsilon \to 0} H^{(\varepsilon)} (u)(t,x) = \lim_{\varepsilon \to 0} \frac{1}{\pi} \int_{|s| \geq \varepsilon} \frac{u(t-s,x)}{s} \, ds.
\]
Moreover, if $u \in C_0^\infty(\cQ)$, for each $x \in \Omega$, we have (see \cite[Theorem 5.1.7]{MR3243734})
\[
\int_\bR |H (u)(t,x)|^p \, dt \leq N \int_\bR |u(t,x)|^p \, dt,
\]
where $N = N(p)$, from which we readily have
\begin{equation}
							\label{eq0510_01}
\|H(u)\|_{L_p(\cQ)} \leq N \|u\|_{L_p(\cQ)}.
\end{equation}
Hence, as an extension, the Hilbert transform $H$ is well-defined and bounded as an operator on $L_p(\cQ)$.

For $u \in L_1(\cQ)$, we define the Fourier transform of $u$ with respect to time as
\[
\tilde{u}(\xi,x) = \frac{1}{\sqrt{2\pi}} \int_\bR e^{-i \xi t} u(t,x) \, dt.
\]
For $u(t,x) \in L_2(\cQ)$, we define the Fourier transform of $u$ with respect to $t \in \bR$ by using an approximating sequence in $C_0^\infty(\cQ)$ converging to $u$ in $L_2(\cQ)$.
We also define inverse Fourier transforms with respect to time in an obvious manner.
For $\xi \in \bR$, we denote
\[
\operatorname{sgn}(\xi)=\begin{cases}
    -1 \quad &\text{if} \quad \xi<0, 
    \\
    0 \quad &\text{if} \quad \xi=0, 
    \\
    1 \quad &\text{if} \quad \xi>0.
\end{cases}
\]
Then, we see that
\begin{equation}
							\label{eq0519_01}
\widetilde{H(\varphi)}(\xi,x) = - i \operatorname{sgn}(\xi) \tilde{\varphi}(\xi,x), \quad \varphi \in C_0^\infty(\cQ).
\end{equation}

For $\varphi \in C_0^\infty(\cQ)$ (or for sufficiently smooth $\varphi$ such that the integral below is finite), we define the half-time derivative of $\varphi$ with respect to time as
\begin{equation}
    \label{closed form}
D_t^{1/2} \varphi{(t,x)} = -(-\Delta_t)^{1/4}\varphi(t,x) = \frac{1}{\sqrt{8\pi}} \int_\bR \frac{\varphi(t+\ell,x)-\varphi(t,x)}{|\ell|^{3/2}} \, d\ell,
\end{equation}
where $\Delta_t = \partial^2/ \partial t^2$.
One can check that the Fourier transform of $D_t^{1/2}\varphi$ with respect to time is
\begin{equation}
							\label{eq0519_02}
\widetilde{D_t^{1/2}\varphi}(\xi,x) = - |\xi|^{1/2}\widetilde{\varphi}(\xi,x).
\end{equation}
Note that $D_t^{1/2}$ can also be called the fractional Laplacian of order $1/4$ with respect to time.

On the other hand, as usual, $Du$ denotes the spatial gradient of $u$, given by
\[
Du=(D_1u,D_2u,\dots, D_d u)=(D_{x_1}u,D_{x_2}u,\dots, D_{x_d}u)
\]
or $Du$ denotes one of its components $Du = D_ju = D_{x_j}u$.
\begin{remark}
							\label{rem0515_1}
For $\varphi \in C_0^\infty(\cQ)$, it is easy to see that $D_t^{1/2}\varphi \in L_p(\cQ)$ for any $p \in [1,\infty]$.
Moreover, $D_t^{1/2} \varphi \in C^\infty(\cQ)$ and $
\partial_t (D_t^{1/2} \varphi) = D_t^{1/2}(\varphi_t)$ and $D D_t^{1/2}\varphi = D_t^{1/2} D \varphi$.
\end{remark}

We now define $D_t^{1/2} u$ for functions with half-time derivatives in $L_p(\cQ)$.

\begin{definition}\label{weak form}
Let $1 < p < \infty$.
For $u \in L_p(\cQ)$, if there exists $v \in L_p(\cQ)$ satisfying
\begin{equation}
							\label{eq0513_03}
\int_\cQ u \, D_t^{1/2} \varphi \, dX = \int_\cQ v \, \varphi \, dX
\end{equation}
for all $\varphi \in C_0^\infty(\cQ)$, then we write
\[
v = D_t^{1/2}u.
\]
\end{definition}

 For $p\in (1,\infty)$, we define
\[
H_p^{1/2,1}(\cQ)= \{u:u,Du,D_t^{1/2}u\in L_p(\cQ)\},
\]
equipped with the norm 
\[
\|u\|_{H_p^{1/2,1}(\cQ)}=\|D_t^{1/2}u\|_{L_p(\cQ)}+\|Du\|_{L_p(\cQ)}+\|u\|_{L_p(\cQ)}.
\]
Let $\mathring H_p^{1/2,1}(\cQ)$ be the closure of $C_0^\infty(\cQ)$ with respect to the norm of $H_p^{1/2,1}(\cQ)$.
Let $-\infty\le S<T\le \infty$ and $m\in \mathbb{N}$. For a multi-index $\alpha=(\alpha_1,\dots,\alpha_d)$, we use the notation $D^\alpha u=D_1^{\alpha_1}\dots D_d^{\alpha_d} u$.
\[
W_p^{1,m}\left((S,T)\times \Omega\right)=\{u: u,\partial_t u, D^\alpha u\in L_p\left((S,T)\times \Omega\right),\,\,  \text{for} \,\, |\alpha|\le m\},
\]
\[W_p^{1,\infty}\left((S,T)\times \Omega\right)= \bigcap_{m=1}^\infty W_p^{1,m}\left((S,T)\times \Omega\right).
\]

Now, we introduce equations along with the definition of solutions.
For $1\le i,j\le d$, let  $a_{ij}$, $b_i$, $c$ be bounded measurable functions on $\cQ$. 
Let $f\in L_p(\cQ)$, $h\in L_p(\cQ)$, and $g =(g_1, \ldots, g_d) \in \left(L_p(\cQ)\right)^d$.
We say that $u\in H_p^{1/2,1}(\cQ)$ is a solution to the equation 
\[
u_t-D_i(a_{ij}D_j u)+b_i D_iu+c u =D_t^{1/2}h+D_ig_i+f \quad \text{in} \,\, \cQ
\]
if  for any $\varphi \in C_0^\infty(\cQ)$, we have 
\begin{multline}
							\label{eq0523_04}
\int_\cQ- H(D_t^{1/2}u) \, D_t^{1/2}\varphi +a_{ij}D_juD_i\varphi+b_i D_iu \, \varphi+ cu\varphi\,dX
\\
=\int_\cQ h \, D_t^{1/2}\varphi
-g_iD_i\varphi+ f\varphi\,dX.
\end{multline}
We remark that (see Lemma \ref{lem0514_1})
\[
\int_\cQ H(D_t^{1/2}u) \, D_t^{1/2}\varphi\,dX= \int_\cQ u \varphi_t\,dX.
\]
Therefore, the above definition of solutions is consistent with the following classical definition of solutions:  
$u\in W_p^{0,1}((S,T)\times \Omega)$ is a solution to the equation 
\begin{equation}
							\label{eq0709_01}
u_t-D_i(a_{ij}D_j u)+b_i D_iu+c u =D_ig_i+f \quad \text{in} \,\, (S,T)\times \Omega
\end{equation}
if, for any $\varphi \in C_0^\infty( [S,T]\times \Omega )$ vanishing at $t=S$ and $T$, we have 
\[
\int_{(S,T)\times \Omega} - u \varphi_t+a_{ij}D_juD_i\varphi+b_i D_iu \varphi+ cu\varphi \,dX
\]
\[
=\int_{(S,T)\times \Omega}-g_iD_i\varphi
+ f\varphi\,dX.
\]

Throughout the paper, we impose the strong ellipticity condition and boundedness on the coefficients $a_{ij}$ with $\delta \in (0,1]$. More precisely, we assume that
\begin{equation}
    \label{eq0521_02}
\delta|\xi|^2 \le a_{ij}(t,x)\xi_i\xi_j, \quad |a_{ij}(t,x)|\le \delta^{-1}, 
\end{equation}
for all $\xi\in \bR^d$, and $(t,x)\in \bR^{d+1}$.

The following are our regularity assumptions for the coefficients $a_{ij}$.

\begin{assumption}[$\gamma$]                                        \label{assum coeffi}
There is a constant $R_0\in (0,1]$ such that, for each $(t,x) \in \bR^{d+1}$ and $r \in (0,R_0]$,
\[
\dashint_{Q_r(t,x)} |a_{ij}(s,y) - \bar{a}_{ij}(s)| \, dy \, ds \leq \gamma,
\]
where
\[
\bar{a}_{ij}(s) = \dashint_{B_r(x)} a_{ij}(s,z) \, dz.
\]
\end{assumption}

\begin{remark}
							\label{rem0522_01}
If $a_{ij}=a_{ij}(t)$, then Assumption \ref{assum coeffi} is satisfied regardless of the regularity of $a_{ij}$.
Assumption \ref{assum coeffi} ($\gamma$) implies that, for any $x \in \bR^d$ and any $a,b, r \in \bR$ such that $b-a \geq 2r^2$ and $r \in (0, R_0]$,
\[
\frac{1}{b-a}\int_a^b \dashint_{B_r(x)} |a_{ij}(s,y) - a_{ij}(s)| \, dy \, ds \leq 2 \gamma.
\]
See \cite[Remark 2.3]{MR3899965} and Remark \ref{rem0522_02} below.
\end{remark}

\begin{assumption}[$\gamma$]                                        \label{assum0521_1}
There is a constant $R_0\in (0,1]$ such that, for each $(t,x) = (t,x_1,x') \in \bR^{d+1}$ and $r \in (0,R_0]$,
\[
\dashint_{Q_r(t,x)} |a_{ij}(s,y_1,y') - \bar{a}_{ij}(y_1)| \, dy \, ds \leq \gamma,
\]
where
\[
\bar{a}_{ij}(y_1) = \dashint_{Q'_r(t,x')} a_{ij}(\tau,y_1,z') \, dz' \, d \tau.
\]
If $d=1$, we read $Q'_r(t,x') = (t-r^2,t+r^2)$.
\end{assumption}

\begin{remark}
							\label{rem0522_02}
If $a_{ij} = a_{ij}(x_1)$, then Assumption \ref{assum0521_1} is satisfied without any regularity conditions on $a_{ij}$.
For any $x \in \bR^d$ and any $a,b,r \in \bR$ such that $b-a \geq 2r^2$, $r \in (0,R_0]$, find a positive integer $k$ satisfying
\[
b - 2(k+1)r^2 < a \leq b - 2k r^2, \quad \text{i.e.}, \quad \frac{1}{k+1} < \frac{2r^2}{b-a} \leq \frac{1}{k}.
\]
Then, we set
\[
b_\ell = b - 2\ell r^2 - r^2, \quad \ell = 0, 1, \ldots, k.
\]
We now consider coefficients $\bar{a}_{ij}$ defined by
\[
\bar{a}_{ij}(s,y_1) = \bar{a}^\ell_{ij}(y_1) \quad \text{for} \,\, s \in \left(b_\ell-r^2, b_\ell+r^2\right],
\]
where
\[
\bar{a}_{ij}^\ell(y_1) = \dashint_{Q_r'(b_\ell,x')}a_{ij}(\tau,y_1,z') \, dz' \, d\tau.
\]
Then, by Assumption \ref{assum0521_1} 
\[
\left( |a_{ij} - \bar{a}_{ij}| \right)_{(a,b) \times B_r(x)}
= \frac{1}{b-a} \int_a^b \dashint_{B_r(x)} |a_{ij}(s,y) - \bar{a}_{ij}(s,y_1) | \, dy \, ds
\]
\[
\leq \frac{1}{b-a} \sum_{\ell=0}^k \int_{b_\ell - r^2}^{b_\ell+r^2} \dashint_{B_r(x)} |a_{ij}(s,y) - \bar{a}_{ij}^\ell(y_1)| \, dy \, ds
\]
\[
= \frac{2r^2}{b-a} \sum_{\ell=0}^k \dashint_{Q_r(b_\ell,x)} \left|a_{ij}(s,y_1,y') - \dashint_{Q_r'(b_\ell,x')} a_{ij}(\tau,y_1,z') \, dz' \, d\tau \right| \, dy \, ds
\]
\[
\leq \frac{2r^2}{b-a} (k+1) \gamma \leq \frac{k+1}{k} \gamma \leq 2 \gamma.
\]
\end{remark}

We are now ready to introduce our main results. 

\begin{theorem}
    \label{main whole space}
Let $p\in(1,\infty)$, $\lambda \geq 0$, $h, g_i, f \in L_p(\bR^{d+1})$, $i=1,\ldots,d$, with $f\equiv 0$ if $\lambda = 0$.
Then, there exist positive constants $\gamma=\gamma(d,\delta,p)$, and $N=N(d,\delta,p)$ such that, under Assumption \ref{assum coeffi} ($\gamma$), for any $u\in H_p^{1/2,1}(\bR^{d+1})$ satisfying 
        \begin{equation}
        \label{main eq}
    u_t-D_i(a_{ij}D_j u)+\lambda u=D_t^{1/2}h+D_ig_i+f \quad \text{in} \,\, \bR^{d+1},
    \end{equation}
we have
\begin{multline}
        \label{main whole space estimate}
\|D_t^{1/2}u\|_{L_p(\bR^{d+1})} + \|Du\|_{L_p(\bR^{d+1})} + \sqrt{\lambda}\|u\|_{L_p(\bR^{d+1})}
\\
\le N\left( \|h\|_{L_p(\bR^{d+1})} + \|g_i\|_{L_p(\bR^{d+1})} + \lambda^{-1/2}\|f\|_{L_p(\bR^{d+1})}\right),
\end{multline}
provided that $\lambda \ge \lambda_0$, where $\lambda_0=\lambda_0(d,\delta,p,R_0)\geq 0$.
Moreover, for $\lambda > \lambda_0$, there exists a unique solution $u\in H_p^{1/2,1}(\bR^{d+1})$ satisfying the equation \eqref{main eq}.
\end{theorem}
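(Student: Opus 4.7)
The plan is to follow the Fefferman--Stein / Krylov mean oscillation approach in the spirit of \cite{MR2304157}, with adaptations to accommodate the non-local operator $D_t^{1/2}$ on the right-hand side. I would first establish the a priori estimate \eqref{main whole space estimate} and then derive existence by the method of continuity, using the already available $L_2$-theory (Theorem \ref{Interior L_2 solvability}) as the starting point.

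A standard freezing-of-coefficients perturbation should reduce the problem to the model case $a_{ij}=a_{ij}(t)$: replace $a_{ij}(t,x)$ by its $x$-average $\bar a_{ij}(t)$ over small balls so that by Assumption \ref{assum coeffi}($\gamma$) the remaining error is bounded by $\gamma \|Du\|_{L_p}$, which can be absorbed on the left once $\gamma$ is taken small. It therefore suffices to derive mean-oscillation bounds for $u$, $Du$, and $D_t^{1/2}u$ under the model assumption.

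For each $X_0=(t_0,x_0)$ and $r\in(0,R_0]$, the crucial structural adjustment is to perform the local decomposition on $\bR\times B_r(x_0)$ rather than on the parabolic cylinder $Q_r(X_0)$, since $D_t^{1/2}$ is non-local in time. Write $u=v+w$ where $v$ solves the homogeneous equation $v_t-D_i(\bar a_{ij}(t)D_j v)+\lambda v=0$ on $\bR\times B_r(x_0)$ and $w$ absorbs the data; the correction $w$ is controlled by the local $L_2$ estimate of Lemma \ref{local L_2 estimate lemma}, where the non-local contribution of $D_t^{1/2}h$ is handled carefully. The mean-oscillation bounds for $u$ and $Du$ then follow from interior H\"older regularity for homogeneous equations with coefficients measurable only in $t$. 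The genuinely new ingredient is the mean-oscillation bound for $D_t^{1/2}u$: following the strategy hinted in the introduction, I would first treat the case $D_i(\bar a_{ij}D_j)=\Delta$, since $D_t^{1/2}$ commutes with both $\partial_t$ and the spatial Laplacian, so $D_t^{1/2}v$ itself solves the homogeneous equation and inherits the H\"older estimate. For general $\bar a_{ij}(t)$ (where $D_t^{1/2}$ no longer commutes with $a_{ij}(t)\cdot$), one rewrites the equation in the manipulated form \eqref{manipulated eqn}, using the identity $u_t=-D_t^{1/2}H(D_t^{1/2}u)$ from Lemma \ref{lem0514_1}, so as to reduce to the heat-equation case with additional inhomogeneous terms absorbable on the right.

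Assembling the three mean-oscillation estimates and invoking the Fefferman--Stein sharp function theorem together with the Hardy--Littlewood maximal function inequality then yields \eqref{main whole space estimate} for $\lambda\ge\lambda_0$. Uniqueness is immediate from the estimate applied to the difference of two solutions; existence for $\lambda>\lambda_0$ follows by the method of continuity, deforming $a_{ij}$ to $\delta_{ij}$ and using $L_2$-solvability together with an approximation argument to start the continuation. The principal obstacle throughout is precisely the non-locality of $D_t^{1/2}$: it prevents the standard localization on bounded parabolic cylinders (forcing the use of the time-unbounded regions $\bR\times B_r$) and it is what makes the $D_t^{1/2}u$ mean-oscillation step delicate, since one must pass from the heat-equation case to general $\bar a_{ij}(t)$ via the equation manipulation rather than by a direct commutation argument.
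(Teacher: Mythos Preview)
Your outline matches the paper's strategy almost exactly: decompose locally on $\bR\times B_r$ rather than $Q_r$ to accommodate the non-local half-derivative, use Lemma \ref{local L_2 estimate lemma} for the inhomogeneous piece, obtain mean-oscillation bounds for $\cU=(Du,\sqrt\lambda u)$ from interior H\"older regularity when $a_{ij}=a_{ij}(t)$, handle $D_t^{1/2}u$ only in the heat-equation case (since $D_t^{1/2}v$ is again a homogeneous solution there), and then combine via the manipulated equation \eqref{manipulated eqn} so that $\|U\|_{L_p}\le N\|F\|_{L_p}+N\|Du\|_{L_p}$ with the last term already controlled.

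There is one real gap. The Fefferman--Stein/Hardy--Littlewood step only yields \eqref{main whole space estimate} for $p>2$. The mean-oscillation inequalities you intend to feed into the sharp function theorem (Lemmas \ref{mean oscillation cal U}, \ref{mean oscillation U simple}, \ref{mean oscillation general coeffi}) carry $L_2$-averages such as $(|\cU|^2)_{Q_{\kappa r}}^{1/2}$ and $(|F|^2)^{1/2}_{Q_{\cdot}}$, and after the freezing step also $(|Du|^\nu)^{1/\nu}$ with $\nu>2$; boundedness of $\cM(|\cdot|^2)^{1/2}$ and $\cS\cM(|\cdot|^\nu)^{1/\nu}$ on $L_p$ requires $p>2$ and $p>\nu$. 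Your proposal treats all $p\in(1,\infty)$ uniformly and never explains how to reach $p\in(1,2)$. The paper closes this range by duality: for $q=p/(p-1)>2$ one solves the adjoint equation $-\tilde u_t-D_j(a_{ij}D_i\tilde u)+\lambda\tilde u=D_t^{1/2}\tilde h-D_i\tilde g_i+\tilde f$ (obtained via $t\mapsto -t$), pairs it against the original equation using the identity $\int H(D_t^{1/2}\tilde u)D_t^{1/2}u=-\int H(D_t^{1/2}u)D_t^{1/2}\tilde u$, and reads off the $L_p$ bound. The case $p=2$ is just Theorem \ref{Interior L_2 solvability}.

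A minor remark on existence: the paper does not literally run the method of continuity from $L_2$-solvability. Having the a priori estimate for all admissible $a_{ij}$, it reduces to solving \eqref{eq0523_03} for the heat operator with $C_0^\infty$ data, invokes classical $W_p^{1,2}$ theory to obtain $u_t,D^2u\in L_p$, and then uses Lemma \ref{lem0515_1} to conclude $D_t^{1/2}u\in L_p$, so $u\in H_p^{1/2,1}$. Your continuity route would also work, but starting from $L_2$ requires an extra argument to place the $L_2$-solution into $H_p^{1/2,1}$.
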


\begin{remark}
In Theorem \ref{main whole space} as well as Theorem \ref{thm0521_1}, one may also consider equations with lower-order terms as in \eqref{eq0709_01} with appropriate conditions on $b_i$ and $c$, such as boundedness.
For simplicity, in this paper we consider the case where $b_i = c = 0$.
\end{remark}

\begin{theorem}
    \label{thm0521_1}
The assertions in Theorem \ref{main whole space} hold under Assumption \ref{assum0521_1} ($\gamma$).
\end{theorem}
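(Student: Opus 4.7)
My plan is to follow the mean-oscillation scheme used in the proof of Theorem \ref{main whole space}, but to reorganize the argument around coefficients frozen in $x_1$ rather than in $t$. The conceptual shift is that when the frozen coefficients $\bar{a}_{ij}(x_1)$ depend only on $x_1$, the equation commutes with differentiation in the tangential directions $(t,x')$ and with $D_t^{1/2}$, but not with $D_1$. Consequently, the \emph{good} quantities admitting interior H\"older estimates for homogeneous solutions are $D_{x'} u$, $D_t^{1/2} u$, $\sqrt{\lambda}\,u$, and the conormal combination $\Theta := \bar{a}_{1j}(x_1) D_j u$. The normal derivative $D_1 u$ is then recovered algebraically via $D_1 u = \bar{a}_{11}^{-1}\bigl(\Theta - \sum_{j \ne 1} \bar{a}_{1j} D_j u\bigr)$, which is the standard mechanism in $x_1$-measurable coefficient theory.

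\textbf{H\"older estimate for the homogeneous equation.} I would consider the homogeneous equation
\[
v_t - D_i\bigl(\bar{a}_{ij}(x_1) D_j v\bigr) + \lambda v = 0
\]
on a set of the form $\bR \times B_R(x_0)$, which is the correct geometry to handle the non-locality of $D_t^{1/2}$, just as in the proof of Theorem \ref{main whole space}. Since $\bar{a}_{ij}$ depend only on $x_1$, both $D_{x'} v$ and $D_t^{1/2} v$ again solve a homogeneous equation of the same form, so iterating the local $L_2$-estimate in the tangential directions yields $L_2$-control of arbitrarily many $D_{x'}$- and $D_t^{1/2}$-derivatives on interior subcylinders. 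Rewriting the equation as
\[
D_1 \Theta = v_t + \lambda v - \sum_{j' \ne 1} D_{j'}\bigl(\bar{a}_{j'k}(x_1) D_k v\bigr),
\]
which is legitimate since $\bar{a}_{11} \geq \delta > 0$, the right-hand side is controlled in $L_2$ by the tangential estimates. This gives local $L_2$-control of $D_1 \Theta$, and similarly of $D_{x'}\Theta$ and $D_t^{1/2}\Theta$. A parabolic Sobolev embedding then yields H\"older continuity of the vector $(D_{x'} v, D_t^{1/2} v, \sqrt{\lambda}\, v, \Theta)$ on interior subcylinders.

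\textbf{Mean oscillation and solvability.} Next I would fix a cylinder $Q_r(X_0)$ with $r \le R_0$ and decompose $u = v + w$, where $v$ solves the homogeneous equation with frozen coefficients $\bar{a}_{ij}(x_1)$ on an enlarged cylinder $\bR \times B_{\kappa r}(x_0)$ and $w$ absorbs the data together with the perturbation $(a_{ij} - \bar{a}_{ij}) D_j u$. Combining the H\"older bound on $v$ with the $L_2$-estimate on $w$ (using H\"older's inequality and the smallness of $\gamma$ from Assumption \ref{assum0521_1} to absorb the perturbation), I expect a mean oscillation bound of the form
\[
\bigl(|V - (V)_{Q_r(X_0)}|\bigr)_{Q_r(X_0)} \le N \kappa^{-\alpha} \bigl(|U|^2 + |F|^2\bigr)^{1/2}_{Q_{\kappa r}(X_0)} + N \kappa^{(d+2)/2} \gamma^{1/2}\bigl(|U|^2\bigr)^{1/2}_{Q_{\kappa r}(X_0)},
\]
where $V := (D_{x'} u, D_t^{1/2} u, \sqrt{\lambda}\, u, \Theta)$ and $U$, $F$ are as in \eqref{definition U}. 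To convert this into an estimate for the full vector $U$, I would use the algebraic recovery of $D_1 u$ together with $|a_{ij} - \bar{a}_{ij}| \le 2\delta^{-1}$ to transfer the oscillation estimate from $V$ to $U$ modulo a $\gamma^{1/2}$-small error. The Fefferman-Stein sharp function theorem and the Hardy-Littlewood maximal function inequality then yield \eqref{main whole space estimate}, and unique solvability for $\lambda > \lambda_0$ follows by the method of continuity connecting our operator to the Laplacian, for which Theorem \ref{main whole space} already applies.

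\textbf{Main obstacle.} The delicate step is the homogeneous H\"older estimate, specifically H\"older continuity of $\Theta$ in the $x_1$ direction when $\bar{a}_{1j}(x_1)$ are merely measurable: unlike $D_{x'} v$ and $D_t^{1/2} v$, whose $x_1$-regularity comes for free from the fact that they solve the same parabolic equation, the $x_1$-regularity of $\Theta$ must be extracted from the equation itself, and obtaining it requires iterating tangential $L_2$-control to a sufficiently high order. A subtlety specific to this paper is that throughout the argument the localization in time must be implicit: the frozen cylinders must be of the form $\bR \times B_r(x_0)$ rather than a parabolic cylinder $Q_r(X_0)$, because the non-locality of $D_t^{1/2}$ precludes restricting the equation to a finite time interval; keeping this geometric asymmetry compatible with the sharp function argument, which is scale-invariant only on standard parabolic cylinders, is the recurring technical nuisance.
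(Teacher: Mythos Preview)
Your approach is correct in spirit but differs from the paper's in how $D_t^{1/2}u$ is handled. The paper does \emph{not} fold $D_t^{1/2}v$ into the H\"older estimate for the $x_1$-frozen equation. Instead it runs the mean-oscillation argument only for the spatial quantity $\cU^Q=(\cU',\Theta_u)=(D'u,\sqrt{\lambda}u,\bar a_{1j}D_ju)$ (Lemma \ref{mean oscillation general coeffi x_1}, with Lemma \ref{holder x_1 mble} quoted from \cite{MR2835999}), obtains from this an analogue of Theorem \ref{Lp mathcal U} bounding $\|\cU\|_p$, and then rewrites the equation as a perturbed heat equation \eqref{manipulated eqn} and invokes Theorem \ref{Lp laplacian} to control $\|D_t^{1/2}u\|_p$, the $\|Du\|_p$-term on the right being absorbed by the previous step. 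The advantage is that the verification that $D_t^{1/2}v$ is itself a weak solution is only ever carried out for the Laplacian (Lemma \ref{interior holder simple}), where $v\in C^\infty$ makes it routine. Your route is more direct---one pass of the sharp-function machinery for the full $U$---but you must justify that $D_t^{1/2}v$ solves the frozen equation when $\bar a_{ij}(x_1)$ are merely measurable; this does hold (the coefficients are $t$-independent, so $D_t^{1/2}$ commutes with the weak formulation), but the justification is more involved than you indicate.

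Two points in your sketch need repair. First, your displayed oscillation bound omits the tail sums $\sum_{j\ge0}2^{-j/4}(\,\cdot\,)_{Q_{2^{j/2}\kappa r,\kappa r}}$ that enter through Lemma \ref{local L_2 estimate lemma} because of the non-locality of $D_t^{1/2}h$; these must be carried through to the maximal-function step. Second, and more seriously, your ``transfer from $V$ to $U$'' does not work as stated: since $\bar a_{1j}(x_1)$ are merely measurable in $x_1$, small oscillation of $\Theta$ and $D'u$ does not yield small oscillation of $D_1u=\bar a_{11}^{-1}(\Theta-\sum_{j\ne1}\bar a_{1j}D_ju)$, because the factor $\bar a_{11}^{-1}(x_1)$ may oscillate wildly. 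The paper sidesteps this entirely: rather than transferring oscillation, it uses the pointwise comparability $N^{-1}|\cU|\le|\cU^Q|\le N|\cU|$ together with a generalized Fefferman--Stein theorem \cite[Corollary 2.8]{MR3812104} that accepts cube-dependent comparison functions. You should do the same for your $V$.
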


\section{Auxiliary results}\label{Auxiliary results}

As seen in \eqref{eq0510_01}, the Hilbert transform $H$ is a bounded operator on $L_p(\cQ)$.
Moreover, $H^{(\varepsilon)}(u)(t,x) \to H(u)(t,x)$ in $L_p(\cQ)$ and a.e. in $\cQ$ as $\varepsilon \to 0$, where we recall that
\[
H^{(\varepsilon)}(u)(t,x) = \frac{1}{\pi} \int_{|s| \geq \varepsilon} \frac{u(t-s,x)}{s} \, ds.
\]
That is, we have

\begin{lemma}\label{Hilbert pp bound}
For $p\in (1,\infty)$, there is $N=N(p)$ such that  
\[
\|H (u)\|_{L_p(\cQ)}\le N \|u\|_{L_p(\cQ)}, 
\]
for all $u \in L_p(\cQ)$.
In particular, for $p=2$, the constant $N=1$.
Moreover,
\[
H^{(\varepsilon)}(u) \to H (u)
\]
in $L_p(\cQ)$ and almost everywhere in $\cQ$ as $\varepsilon \to 0$.
\end{lemma}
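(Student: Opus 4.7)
The plan is to reduce all three assertions to the classical one-dimensional Hilbert transform theory by slicing in the spatial variable and invoking Fubini. For $u \in C_0^\infty(\cQ)$ and each fixed $x \in \Omega$, the map $t \mapsto u(t,x)$ is smooth and compactly supported on $\bR$, so the one-dimensional $L_p$ bound already cited in the text (from \cite[Theorem 5.1.7]{MR3243734}) gives $\|H(u)(\cdot,x)\|_{L_p(\bR)} \le N(p) \|u(\cdot,x)\|_{L_p(\bR)}$. Raising to the $p$th power and integrating in $x$ yields \eqref{eq0510_01} for smooth $u$; density of $C_0^\infty(\cQ)$ in $L_p(\cQ)$ then extends the inequality to all of $L_p(\cQ)$ and defines $H$ unambiguously as a bounded linear operator there.

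The sharp constant $N=1$ when $p=2$ is a direct consequence of Plancherel applied in the time variable together with the Fourier identity \eqref{eq0519_01}. For $\varphi \in C_0^\infty(\cQ)$, this identity gives $|\widetilde{H(\varphi)}(\xi,x)| = |\tilde{\varphi}(\xi,x)|$ for a.e. $\xi\in \bR$ and each $x$, whence
\[
\|H(\varphi)\|_{L_2(\cQ)}^2 = \int_\Omega \int_\bR |\widetilde{H(\varphi)}(\xi,x)|^2 \, d\xi \, dx = \|\varphi\|_{L_2(\cQ)}^2,
\]
and density upgrades this equality to all $u \in L_2(\cQ)$.

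For the convergence statements I would introduce the maximal truncated Hilbert transform $H^\ast(u)(t,x) = \sup_{\varepsilon>0} |H^{(\varepsilon)}(u)(t,x)|$. The classical Cotlar-type bound applied slice by slice in $t$, combined with Fubini as above, gives $\|H^\ast(u)\|_{L_p(\cQ)} \le N(p) \|u\|_{L_p(\cQ)}$. When $u\in C_0^\infty(\cQ)$ the pointwise convergence $H^{(\varepsilon)}u(t,x) \to Hu(t,x)$ at every $(t,x)$ is immediate from the 1D theory, and dominated convergence (with $H^\ast u \in L_p(\cQ)$ as envelope) then yields $L_p$-convergence. For general $u \in L_p(\cQ)$, pick $\varphi \in C_0^\infty(\cQ)$ close to $u$ in $L_p$ and use
\[
\|H^{(\varepsilon)}u - Hu\|_{L_p(\cQ)} \le \|H^\ast(u-\varphi)\|_{L_p(\cQ)} + \|H^{(\varepsilon)}\varphi - H\varphi\|_{L_p(\cQ)} + \|H(\varphi-u)\|_{L_p(\cQ)},
\]
and a standard $\limsup$ argument to obtain $L_p$ convergence. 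Almost everywhere convergence follows from the maximal inequality in the usual way: the oscillation $\Lambda u(t,x) = \limsup_{\varepsilon \to 0} |H^{(\varepsilon)}u - Hu|(t,x)$ is controlled by $2H^\ast(u-\varphi)$ plus a term that vanishes for smooth $\varphi$, and a weak-type $(p,p)$ argument using the maximal bound forces $\Lambda u = 0$ a.e. The only mild obstacle is purely bookkeeping, namely checking that Fubini legitimately lifts the one-dimensional results to $\cQ$; there is no genuine analytic difficulty, as everything reduces to standard Calder\'on--Zygmund theory in the time variable.
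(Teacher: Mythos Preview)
Your proposal is correct and is precisely the elaboration that the paper's one-line proof (a bare citation of \cite[Chapter~5]{MR3243734}) points to; in fact, the slice-and-Fubini reduction you carry out for the $L_p$ bound is already sketched in the paragraph preceding the lemma, and the remaining assertions are the standard one-dimensional facts lifted to $\cQ$ exactly as you describe.
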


\begin{proof}
The lemma follows from the results, for instance, in \cite[Chapter 5]{MR3243734}, regarding the operators $H$ and $H^{(\varepsilon)}$.
\end{proof}

If $u \in L_p(\cQ)$ with $D_t^{1/2}u \in L_p(\cQ)$, then $u$ can be approximated by a sequence in the same class having compact support with respect to time.
In fact, one can obtain an approximating sequence by multiplying an appropriate cut-off function in time.
See Lemma \ref{tail estimate} below.
Before this lemma, we first observe the following, which can be proved by the definition of $D_t^{1/2}u$ and Fubini's theorem.

\begin{lemma}
							\label{lem0513_1}
Let $p \in (1,\infty)$.
If $u \in L_p(\cQ)$ with $D_t^{1/2}u \in L_p(\cQ)$ and $\eta \in C_0^\infty(\bR)$, 
then $D_t^{1/2}(\eta u) \in L_p(\cQ)$ and
\[
D_t^{1/2} (\eta u) = \eta D_t^{1/2}u + \frac{1}{\sqrt{8\pi}} \int_\bR u(t+\ell,x)\frac{\eta(t+\ell)-\eta(t)}{|\ell|^{3/2}} \, d\ell.
\]
\end{lemma}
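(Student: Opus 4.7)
The plan is to verify the candidate formula by pairing against test functions and invoking Definition \ref{weak form}. Write $v(t,x)$ for the right-hand side of the asserted identity and $w(t,x)$ for its second summand, the singular integral involving $u$.

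First I would check that $v \in L_p(\cQ)$. The term $\eta D_t^{1/2}u$ is immediately in $L_p(\cQ)$ since $\eta$ is bounded. For $w$, split the $\ell$-integral into $|\ell|\le 1$ and $|\ell|>1$: on the inner region use $|\eta(t+\ell)-\eta(t)| \le \|\eta'\|_\infty |\ell|$, so the kernel is dominated by $|u(t+\ell,x)||\ell|^{-1/2}$; on the outer region use $|\eta(t+\ell)-\eta(t)| \le 2\|\eta\|_\infty$, so the kernel is dominated by $|u(t+\ell,x)||\ell|^{-3/2}$. Translation invariance of the $L_p$-norm in time combined with Minkowski's integral inequality then yields $\|w\|_{L_p(\cQ)} \le N(\eta)\|u\|_{L_p(\cQ)}$, using $\int_{|\ell|\le 1}|\ell|^{-1/2}\,d\ell < \infty$ and $\int_{|\ell|>1}|\ell|^{-3/2}\,d\ell < \infty$.

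Next I would establish the pointwise product identity on smooth functions. For any $\varphi \in C_0^\infty(\cQ)$ the product $\eta\varphi$ lies in $C_0^\infty(\cQ)$, so \eqref{closed form} applies to it. The algebraic splitting
\[
\eta(t+\ell)\varphi(t+\ell,x) - \eta(t)\varphi(t,x) = \eta(t)\bigl[\varphi(t+\ell,x)-\varphi(t,x)\bigr] + \varphi(t+\ell,x)\bigl[\eta(t+\ell)-\eta(t)\bigr],
\]
together with the integrability already observed for smooth compactly supported factors, yields
\[
D_t^{1/2}(\eta\varphi)(t,x) = \eta(t)\, D_t^{1/2}\varphi(t,x) + \frac{1}{\sqrt{8\pi}}\int_\bR \varphi(t+\ell,x)\frac{\eta(t+\ell)-\eta(t)}{|\ell|^{3/2}}\,d\ell.
\]

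The final step is to verify \eqref{eq0513_03} with $u$ replaced by $\eta u$ and $v$ as above. Starting from $\int_\cQ (\eta u)\,D_t^{1/2}\varphi\,dX = \int_\cQ u \,\bigl(\eta D_t^{1/2}\varphi\bigr)\,dX$, I would use the displayed identity to rewrite $\eta D_t^{1/2}\varphi = D_t^{1/2}(\eta\varphi) - (\text{commutator})$. The first piece gives $\int_\cQ u\, D_t^{1/2}(\eta\varphi)\,dX = \int_\cQ \eta\, D_t^{1/2}u\cdot \varphi\,dX$ by Definition \ref{weak form} applied to the test function $\eta\varphi \in C_0^\infty(\cQ)$. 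For the commutator piece, apply Fubini to interchange $dX$ and $d\ell$, then perform the substitutions $t \mapsto t-\ell$ followed by $\ell \mapsto -\ell$; the resulting kernel is exactly $-\sqrt{8\pi}\,w(t,x)$. Collecting terms gives $\int_\cQ (\eta u)\,D_t^{1/2}\varphi\,dX = \int_\cQ v\,\varphi\,dX$, and Definition \ref{weak form} concludes $D_t^{1/2}(\eta u) = v$.

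The only mildly delicate point I expect is the Fubini justification on the commutator: the kernel $|\ell|^{-3/2}$ is non-integrable at zero, but the bound $|\eta(t+\ell)-\eta(t)| = O(\ell)$ near zero restores integrability there, while for large $|\ell|$ the factor $|\ell|^{-3/2}$ is integrable. Combined with the compact support of $\varphi$ (which confines $(t,x)$ to a bounded set) and $u \in L_p$, Hölder's inequality yields an $L^1(\cQ \times \bR)$ bound on the integrand, legitimizing the interchange.
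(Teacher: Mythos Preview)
Your proposal is correct and follows precisely the route the paper indicates: the paper's proof is merely the one-line remark ``by the definition of $D_t^{1/2}u$ and Fubini's theorem,'' and you have supplied exactly those details---verifying $v\in L_p$, using the pointwise product formula for $D_t^{1/2}(\eta\varphi)$ on test functions, and then checking \eqref{eq0513_03} via Fubini and the change of variables $(t,\ell)\mapsto(t-\ell,-\ell)$. One small remark: in the Fubini justification the compact support of $\varphi$ is not what confines the $(t,x)$ integration---rather, H\"older's inequality together with translation invariance of $\|u(\cdot+\ell,\cdot)\|_{L_p}$ already gives the $L^1(\cQ\times\bR)$ bound, as you in fact use.
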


In the lemma below, we use the following sequence of functions in time.
For $k=0,1,2,\dots,$ let $0\le \eta_k\le 1$ be a $C^\infty(\bR)$ function such that 
\begin{equation}
							\label{eq0513_02}
\eta_k(t)
=\begin{cases}
1 \quad &\text{for} \,\, t\in (-2^k,2^k),
\\
0 \quad &\text{for} \,\, t\in \bR\setminus (-2^{k+1},2^{k+1}),
\end{cases}
\end{equation}
and 
\begin{equation}
    \label{cut off bound}
    |(\eta_k)_t|\le N2^{-k},
\end{equation}
where $N$ is a positive constant independent of $k$.

\begin{lemma}
    \label{tail estimate}
Let $p\in(1,\infty)$.
For a function $u$ defined on $\cQ$ such that the right-hand side of \eqref{eq0513_01} is finite,
set
\[
u_k(t,x) = \frac{1}{\sqrt{8\pi}} \int_\bR u(t+\ell,x)\frac{\eta_k(t+\ell)-\eta_k(t)}{|\ell|^{3/2}} \, d\ell.
\]
Then,
\begin{equation}
							\label{eq0513_01}
\|u_k\|_{L_p(\cQ)} \le N2^{-k/2} \sum_{j=1}^{\infty}2^{-j(1/2+1/p)}\|u\|_{L_p\left((-2^{k+j},2^{k+j}) \times \Omega\right)},
\end{equation}
where $N = N(p)$.
Moreover, if $u \in L_p(\cQ)$ and $D_t^{1/2}u \in L_p(\cQ)$, then
\begin{equation}
							\label{eq0520_02}
u_k = D_t^{1/2}(u\eta_k) - \eta_k D_t^{1/2}u,
\end{equation}
so that the estimate \eqref{eq0513_01} holds with $u_k$ replaced with $D_t^{1/2}(u\eta_k) - \eta_k D_t^{1/2}u$.
\end{lemma}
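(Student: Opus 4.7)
The identity \eqref{eq0520_02} follows directly from Lemma \ref{lem0513_1} applied with $\eta = \eta_k$: that lemma gives $D_t^{1/2}(u\eta_k) = \eta_k D_t^{1/2} u + u_k$, whence \eqref{eq0520_02}. The main work is therefore the estimate \eqref{eq0513_01}.

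After the substitution $s = t+\ell$, I rewrite $u_k(t,x) = (8\pi)^{-1/2} \int_{\bR} u(s,x) K_k(t,s)\, ds$ with $K_k(t,s) = (\eta_k(s) - \eta_k(t))|s-t|^{-3/2}$. Combining $|\eta_k| \leq 1$ with the Lipschitz bound $|\eta_k(s) - \eta_k(t)| \leq N 2^{-k} |s-t|$ derived from \eqref{cut off bound} gives the pointwise kernel estimate $|K_k(t,s)| \leq \psi_k(s-t)$, where $\psi_k(\tau) := \min(1, N 2^{-k}|\tau|)\,|\tau|^{-3/2}$. A direct calculation, splitting at $|\tau| \approx 2^k$, yields $\|\psi_k\|_{L_1(\bR)} \leq N 2^{-k/2}$, with the short- and long-range parts contributing comparably.

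To extract the dyadic sum in \eqref{eq0513_01}, I decompose $u = u_{(1)} + \sum_{j \geq 2} u_{(j)}$, where $u_{(1)} = u\mathbf{1}_{|s| \leq 2^{k+2}}$ and $u_{(j)} = u\mathbf{1}_{2^{k+j} < |s| \leq 2^{k+j+1}}$ for $j \geq 2$, and correspondingly $u_k = v_1 + \sum_{j \geq 2} v_j$. For $v_1$, Young's convolution inequality yields $\|v_1\|_{L_p(\cQ)} \leq \|\psi_k\|_{L_1} \|u_{(1)}\|_{L_p(\cQ)} \leq N 2^{-k/2} \|u\|_{L_p((-2^{k+2},2^{k+2}) \times \Omega)}$, which is absorbed into the $j=2$ term of the target sum. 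For $j \geq 2$, the support of $u_{(j)}$ lies outside $(-2^{k+1}, 2^{k+1})$, so $\eta_k(s) = 0$ there and $K_k(t,s) = -\eta_k(t)|s-t|^{-3/2}$, which is supported in $|t| < 2^{k+1}$; since $|s| > 2^{k+j}$ and $|t| < 2^{k+1} \leq 2^{k+j-1}$ when $j \geq 2$, one has $|s-t| \gtrsim 2^{k+j}$. Estimating the $s$-integral via H\"older's inequality over the shell (of measure $\lesssim 2^{k+j}$) and then taking the $L_p^t$-norm over the support in $t$ (of measure $\lesssim 2^{k+1}$), an exponent computation yields $\|v_j\|_{L_p(\cQ)} \leq N 2^{-k/2}\, 2^{-j(1/2+1/p)}\,\|u\|_{L_p((-2^{k+j+1},2^{k+j+1}) \times \Omega)}$. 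Summing over $j$ and reindexing delivers \eqref{eq0513_01}.

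The main obstacle is the exponent bookkeeping: the common prefactor $2^{-k/2}$ must emerge both from $\|\psi_k\|_{L_1}$ in the short range and from the algebraic combination of the kernel size $2^{-3(k+j)/2}$ with the H\"older measure factors $(2^{k+j})^{1/p'}$ and $(2^{k+1})^{1/p}$ in the long range, producing exactly $2^{-k/2 - j(1/2 + 1/p)}$ as required. Verifying that this balance works uniformly for all $p \in (1,\infty)$ and that the support conditions cleanly separate the two regimes is the only delicate point.
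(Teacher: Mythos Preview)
Your proposal is correct. The identity \eqref{eq0520_02} is handled identically to the paper, and your kernel estimate $|K_k(t,s)|\le \psi_k(s-t)$ with $\|\psi_k\|_{L_1}\le N2^{-k/2}$ is accurate; the exponent bookkeeping you describe indeed produces $2^{-k/2-j(1/2+1/p)}$.

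The route differs from the paper's. The paper decomposes the \emph{integration variable} $\ell$ into the near region $|\ell|\le 2^{k+3}$ (handled via the Lipschitz bound on $\eta_k$ and Minkowski's inequality) and the far regions $|\ell|>2^{k+3}$ (handled by a dyadic split in $\ell$ plus H\"older in $t$), and it additionally splits the $L_p$-norm in $t$ according to whether $|t|\le 2^{k+2}$ or $|t|>2^{k+2}$, treating each piece separately. You instead decompose the \emph{function} $u$ into dyadic shells in $s=t+\ell$ and dispatch the innermost piece in one stroke by Young's convolution inequality; the outer shells automatically localize in $t$ to the support of $\eta_k$, so no separate case analysis in $t$ is needed. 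Your argument is somewhat shorter and more streamlined, while the paper's is more hands-on and makes the origin of each term in the final sum explicit. Both are standard; your use of Young's inequality for the near part is a genuine simplification over the paper's Minkowski-based treatment of $J_1$.
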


\begin{proof}
By Lemma \ref{lem0513_1}, it is clear that \eqref{eq0520_02} holds if $u, D_t^{1/2}u \in L_p(\cQ)$.
To prove \eqref{eq0513_01}, we write
\[
\sqrt{8\pi}|u_k| \leq \int_{\bR} |u(t+\ell,x)|\frac{|\eta_k(t+\ell)-\eta_k(t)|}{|\ell|^{3/2}} \, d\ell
\]
\[
=\int_{-2^{k+3}}^{2^{k+3}}\cdots+\int_{-\infty}^{-2^{k+3}}\cdots +\int_{2^{k+3}}^{\infty}\cdots
=:J_1+J_2+J_3.
\]
By \eqref{cut off bound}, we have 
\begin{equation*}
    |\eta_k(t+\ell)-\eta_k(t)|\le N2^{-k}|\ell|.
\end{equation*}
We then see that 
\[
J_1(t,x) = \int_{-2^{k+3}}^{2^{k+3}}|u(t+\ell,x)||\eta_k(t+\ell)-\eta_k(t)|\frac{d\ell}{|\ell|^{3/2}}
\]
\[
\le N2^{-k}\int_{-2^{k+3}}^{2^{k+3}}|u(t+\ell,x)|\frac{d\ell}{|\ell|^{1/2}},
\]
which, along with Minkowski's inequality, shows
\begin{multline}
    \label{local 14}
\|J_1\|_{L_p\left((-2^{k+2},2^{k+2}) \times \Omega \right)}
\\
\le N2^{-k}\int_{-2^{k+3}}^{2^{k+3}}\|u(\cdot+\ell,\cdot)\|_{L_p\left((-2^{k+2},2^{k+2}) \times \Omega\right)}\, \frac{d\ell}{|\ell|^{1/2}}.
\end{multline}
Note that, for $\ell\in (-2^{k+3},2^{k+3})$, 
\[
\|u(\cdot+\ell,\cdot)\|_{L_p\left((-2^{k+2},2^{k+2}) \times \Omega \right)}^p
=
\int_{-2^{k+2}}^{2^{k+2}} \int_\Omega |u(t+\ell,x)|^p \, dx \,dt
\]
\[
\le
\int_{-2^{k+4}}^{2^{k+4}} \int_\Omega |u(t,x)|^p \, dx \,dt
=
\|u\|_{L_p\left((-2^{k+4},2^{k+4}) \times \Omega \right)}^p.
\]
From this inequality and \eqref{local 14}, we have
\[
\|J_1\|_{L_p\left((-2^{k+2},2^{k+2}) \times \Omega\right)}
\le
N2^{-k}\|u\|_{L_p\left((-2^{k+4},2^{k+4}) \times \Omega\right)}\int_{-2^{k+3}}^{2^{k+3}}\frac{d\ell}{|\ell|^{1/2}}
\]
\begin{equation}
    \label{local 15}
    \le N2^{-k/2}\|u\|_{L_p\left((-2^{k+4},2^{k+4}) \times \Omega \right)}. 
\end{equation}

We now estimate $\|J_2\|_{L_p\left((-2^{k+2},2^{k+2}) \times \Omega \right)}$. 
For $t\in (-2^{k+2},2^{k+2})$,
we obtain that
\[
J_2(t,x)
= \int_{-\infty}^{-2^{k+3}}|u(t+\ell,x)||\eta_k(t+\ell)-\eta_k(t)| \, \frac{d\ell}{|\ell|^{3/2}}
\]
\[
\le 2 \int_{-\infty}^{-2^{k+3}}|u(t+\ell,x)|\, \frac{d\ell}{|\ell|^{3/2}} = 2 \sum_{j=0}^{\infty} \int_{-2^{k+4+j}}^{-2^{k+3+j}}|u(t+\ell,x)|\, \frac{d\ell}{|\ell|^{3/2}}
\]
\[
\le N \sum_{j=0}^{\infty}2^{-3(k+j)/2}  \int_{-2^{k+4+j}}^{-2^{k+3+j}}|u(t+\ell,x)| \, d\ell
\]
\[
\le 
N \sum_{j=0}^{\infty}2^{-3(k+j)/2}\int_{-2^{k+5+j}}^{-2^{k+2}}|u(\ell,x)| \, d\ell
\]
\[
\le 
N2^{-k(1/2+1/p)}
\sum_{j=0}^{\infty}2^{-j(1/2+1/p)}\|u(\cdot,x)\|_{L_p((-2^{k+2},2^{k+5+j}))},
\]
where we apply H\"older's inequality with respect to $t$ to get the last inequality.
From this estimate, it follows that 
\begin{multline}
    \label{local 16}
\|J_2\|_{L_p\left((-2^{k+2},2^{k+2}) \times \Omega\right)}
\\
\le N2^{-k/2}
\sum_{j=0}^{\infty}2^{-j(1/2+1/p)}\|u\|_{L_p\left((-2^{k+5+j},-2^{k+2}) \times \Omega \right)}.
\end{multline}
In the same manner, we have 
\begin{multline}
    \label{local 17}
\|J_3\|_{L_p\left((-2^{k+2},2^{k+2}) \times \Omega \right)}
\\
\le N2^{-k/2}
\sum_{j=0}^{\infty}2^{-j(1/2+1/p)}\|u\|_{L_p\left((2^{k+2},2^{k+5+j})\times \Omega \right)}.
\end{multline}
Combining the estimates \eqref{local 15},\eqref{local 16}, and \eqref{local 17}, we reach that 
\begin{multline}
    \label{local 18}
\|u_k\|_{L_p\left((-2^{k+2},2^{k+2}) \times \Omega \right)}
\\
\le N2^{-k/2}
\sum_{j=0}^{\infty}2^{-j(1/2+1/p)}\|u\|_{L_p\left((-2^{k+j},2^{k+j}) \times \Omega \right)}.
\end{multline}

To estimate $\|u_k\|_{L_2\left((-\infty,-2^{k+2}) \times \Omega \right)}$,
we notice that, for $t\in(-\infty,-2^{k+2})$,
\[
\sqrt{8\pi}|u_k(t,x)| \leq \int_{-\infty}^{\infty}|u(t+\ell,x)||\eta_k(t+\ell)-\eta_k(t)| \, \frac{d\ell}{|\ell|^{3/2}}
\]
\[
=
\int_{-\infty}^{\infty}|u(t+\ell,x)||\eta_k(t+\ell)| \,\frac{d\ell}{|\ell|^{3/2}} = \int_{-\infty}^\infty |u(\ell,x)||\eta_k(\ell)| \, \frac{d\ell}{|\ell-t|^{3/2}}
\]
\[
\leq \int_{-2^{k+1}}^{2^{k+1}}|u(\ell,x)| \, \frac{d\ell}{|\ell-t|^{3/2}}
\]
\[
\le
\left(\int_{-2^{k+1}}^{2^{k+1}}|u(\ell,x)|^p\,d \ell\right)^{1/p}\left(\int_{-2^{k+1}}^{2^{k+1}}|\ell-t|^{-3q/2}\,d\ell\right)^{1/q},
\]
where the last inequality comes from H\"older's inequality with $1/p + 1/q = 1$.
By direct computation, for $t\in (-\infty,-2^{k+2})$,
 we see that 
\[
\int_{-2^{k+1}}^{2^{k+1}}|\ell-t|^{-3q/2}\,d\ell 
\le \int_{-2^{k+1}}^{2^{k+1}}|-2^{k+1}-t|^{-3q/2}\,d\ell
\]
\[
= N2^{k}|t+2^{k+1}|^{-3q/2}.
\]
From the above estimates, we get
\[
\|u_k\|_{L_p((-\infty,-2^{k+2}))}^p
\]
\[
\le N2^{kp/q}
\int_{-\infty}^{-2^{k+2}}
\left(\int_{-2^{k+1}}^{2^{k+1}}|u(\ell,x)|^p\,d\ell \, dx\right)|t+2^{k+1}|^{-3p/2}\,dt
\]
\[
= N2^{kp/q}
 \|u\|_{L_p\left((-2^{k+1},2^{k+1}) \times \Omega \right)}^p
\int_{-\infty}^{-2^{k+2}}|t+2^{k+1}|^{-3p/2}\,dt
\]
\[
= N2^{kp/q}
\|u\|_{L_p\left((-2^{k+1},2^{k+1}) \times \Omega \right)}^p
\int_{2^{k+1}}^{\infty}|t|^{-3p/2}\,dt
\]
\[
\le N2^{-kp/2}\|u\|_{L_p\left((-2^{k+1},2^{k+1}) \times \Omega\right)}^p.
\]
That is,
\begin{equation}
    \label{local 19}
\|u_k\|_{L_p\left((-\infty,-2^{k+2}) \times \Omega \right)}
\le
N2^{-k/2}\|u\|_{L_p\left((-2^{k+1},2^{k+1})\times \Omega\right)}.
\end{equation}
Analogously, we have
\begin{equation}
    \label{local 20}
\|u_k\|_{L_p\left((2^{k+2},\infty) \times \Omega \right)}
\le
N2^{-k/2}\|u\|_{L_p\left((-2^{k+1},2^{k+1}) \times 
\Omega \right)}.
\end{equation}
Upon combining \eqref{local 18},\eqref{local 19}, and \eqref{local 20}, we conclude the inequality \eqref{eq0513_01}.
\end{proof}

We now observe that whenever $D_t^{1/2}u \in L_p(\cQ)$,  there exists an approximating sequence in the class $C_0^\infty(\cQ)$.

\begin{lemma}
							\label{lem0509_2}
Let $1 < p < \infty$.
For $u \in L_p(\cQ)$ such that $D_t^{1/2}u \in L_p(\cQ)$, there exists a sequence $u^k \in C_0^\infty(\cQ)$ such that
\[
u^k \to u, \quad D_t^{1/2}u^k \to D_t^{1/2}u
\]
in $L_p(\cQ)$.
\end{lemma}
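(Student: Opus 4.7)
The plan is a two-step approximation: first produce a compactly supported approximation using the time cutoff $\eta_k$ from \eqref{eq0513_02} (together with a spatial cutoff, if needed), then smooth it by convolution. The only nontrivial point is controlling the half-time derivative under the time cutoff, and Lemma \ref{tail estimate} is tailor-made for this.

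For the first step, pick $\zeta_k \in C_0^\infty(\Omega)$ with $0 \le \zeta_k \le 1$ equal to $1$ on an exhaustion of $\Omega$ by compact sets (take $\zeta_k \equiv 1$ if $\Omega = \bR^d$ and no spatial truncation is needed). Set $v^k(t,x) = \zeta_k(x) \eta_k(t) u(t,x)$. Dominated convergence gives $v^k \to u$ in $L_p(\cQ)$. Since $\zeta_k$ depends only on $x$ while $D_t^{1/2}$ acts only in time, Lemma \ref{lem0513_1} applied to $\eta_k u$ yields
\[
D_t^{1/2}v^k = \zeta_k \eta_k D_t^{1/2}u + \zeta_k w_k,
\]
where $w_k$ is the object of Lemma \ref{tail estimate}. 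Dominated convergence then gives $\zeta_k \eta_k D_t^{1/2}u \to D_t^{1/2}u$ in $L_p(\cQ)$, while the tail bound \eqref{eq0513_01} provides
\[
\|w_k\|_{L_p(\cQ)} \le N \, 2^{-k/2}\, \|u\|_{L_p(\cQ)} \sum_{j=1}^{\infty} 2^{-j(1/2+1/p)} \longrightarrow 0,
\]
since the series converges. Hence $D_t^{1/2} v^k \to D_t^{1/2}u$ in $L_p(\cQ)$, and $v^k$ has compact support in $\cQ$.

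For the second step, fix a nonnegative mollifier $\rho \in C_0^\infty(\bR^{d+1})$ with $\int \rho = 1$, and set $\rho_\epsilon(X) = \epsilon^{-(d+1)}\rho(X/\epsilon)$. For $\epsilon$ sufficiently small (depending on $k$), $\rho_\epsilon * v^k \in C_0^\infty(\cQ)$. Passing the test-function identity of Definition \ref{weak form} through Fubini shows that $D_t^{1/2}$ commutes with convolution by $\rho_\epsilon$, so that
\[
D_t^{1/2}(\rho_\epsilon * v^k) = \rho_\epsilon * D_t^{1/2}v^k.
\]
Standard mollification properties then give $\rho_\epsilon * v^k \to v^k$ and $D_t^{1/2}(\rho_\epsilon * v^k) \to D_t^{1/2} v^k$ in $L_p(\cQ)$ as $\epsilon \to 0$. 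A diagonal extraction, choosing $\epsilon_k$ so that the two errors for $v^k$ are below $1/k$, produces a sequence $u^k \in C_0^\infty(\cQ)$ with the required convergence.

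The substantive content is entirely in the time cutoff: without the quantitative decay supplied by Lemma \ref{tail estimate}, multiplication by $\eta_k$ would introduce a nonlocal-in-time commutator whose smallness is not visible from pointwise considerations. Once that commutator is killed by the $2^{-k/2}$ factor, the spatial truncation and the mollification steps are routine.
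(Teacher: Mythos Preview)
Your argument is correct and matches the paper's own (sketched) proof: reduce to compactly supported $u$ via a spatial cutoff (harmless since no spatial derivatives are involved) together with the time cutoff $\eta_k$, where Lemma~\ref{tail estimate} kills the commutator $D_t^{1/2}(\eta_k u)-\eta_k D_t^{1/2}u$; then mollify and use $D_t^{1/2}(u^{(\varepsilon)})=(D_t^{1/2}u)^{(\varepsilon)}$. You have simply spelled out the details (including the diagonal extraction) that the paper leaves to the reader.
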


\begin{proof}
We give a sketched proof.
Since $u$ is free from spatial derivatives, one can assume that $u$ has compact support in $x \in \Omega$.
In addition, Lemma \ref{tail estimate} implies that $u$ can be approximated by functions with compact support in $t \in \bR$.
Thus, it is enough to prove the lemma for $u$ having compact support in $\cQ$.
In this case, we take the mollification $u^{(\varepsilon)}$ of $u$ to generate $u^k$, for which one can check that
\[
D_t^{1/2}(u^{(\varepsilon)}) = (D_t^{1/2}u)^{(\varepsilon)}.
\]
This, in particular, proves the convergence of $D_t^{1/2}u^k \to D_t^{1/2}u$ in $L_p(\cQ)$.
\end{proof}

\begin{remark}
							\label{rem0506_1}
Thanks to Lemma \ref{lem0509_2}, the equality \eqref{eq0513_03} holds for all $\varphi \in L_q(\cQ)$, provided that $D_t^{1/2}\varphi \in L_q(\cQ)$, $1/p+1/q=1$.
\end{remark}

We now summarize a few properties necessary for the remaining part of this paper regarding $H$ and $D_t^{1/2}$.
In particular,
\begin{equation*}
D_t^{1/2}H (D_t^{1/2} \varphi) = - \varphi_t
\end{equation*}
for $\varphi \in C_0^\infty(\cQ)$.

\begin{lemma}
							\label{lem0514_1}
Let $1< p < \infty$.
\begin{enumerate}
\item For $u \in L_2(\cQ)$,
\[
\widetilde{H(u)}(\xi,x) = - i \operatorname{sgn}(\xi) \tilde{u}(\xi,x).
\]

\item For $u \in L_p(\cQ)$, $H(H(u)) = H^2(u) = - u$.

\item For $u \in L_2(\cQ)$ with $D_t^{1/2} u \in L_2(\cQ)$, we have
\[
\widetilde{D_t^{1/2} u}(\xi,x) = -|\xi|^{1/2} \tilde{u}(\xi,x).
\]

\item For $u \in L_p(\cQ)$ with $D_t^{1/2}u \in L_p(\cQ)$,
\[
\int_\cQ H (D_t^{1/2} u) \, D_t^{1/2} \varphi \, dX = \int_\cQ u \, \varphi_t \, dX
\]
for all $\varphi \in C_0^\infty(\cQ)$, and
\[
H (D_t^{1/2} u) = D_t^{1/2}H(u).
\]

\item For $u \in L_p(\cQ)$ with $Du \in L_p(\cQ)$,
\begin{equation}
							\label{eq0514_01}
D_{x_j} H(u) = H(D_{x_j}u).\end{equation}

\item For $u \in L_p(\cQ)$ such that $D_t^{1/2}u, Du \in L_p(\cQ)$,
\begin{equation}
							\label{eq0515_02}
D_t^{1/2}(D_{x_j}u) = D_{x_j}(D_t^{1/2}u).
\end{equation}
\end{enumerate}
\end{lemma}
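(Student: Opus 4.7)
The plan is to prove (1) and (3) by Fourier analysis on $L_2(\cQ)$ combined with density, to deduce (2) from (1), and then to handle (4), (5), (6) by a single template: integrate against a $C_0^\infty(\cQ)$ test function $\varphi$, shift the operator off $u$ onto $\varphi$ using either the weak definition of a derivative or the skew-adjointness of $H$, rearrange using the mutual commutation of $H$, $D_t^{1/2}$, and $D_{x_j}$ on $C_0^\infty(\cQ)$ (which is transparent from the Fourier symbols together with Remark~\ref{rem0515_1}), and finally move one operator back onto $u$.

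For (1), identity~\eqref{eq0519_01} already holds on $C_0^\infty(\cQ)$; since $H$ is $L_2$-bounded (Lemma~\ref{Hilbert pp bound}) and the time Fourier transform is an isometry on $L_2(\cQ)$, the identity extends to $L_2(\cQ)$ by density.  Part~(3) is analogous: \eqref{eq0519_02} gives the symbol on $C_0^\infty(\cQ)$, and Lemma~\ref{lem0509_2} supplies a sequence $u^k\in C_0^\infty(\cQ)$ with $u^k\to u$ and $D_t^{1/2}u^k\to D_t^{1/2}u$ in $L_2(\cQ)$, allowing passage to the limit (combined with a.e.\ subsequential convergence to identify $|\xi|^{1/2}\tilde u$).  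For (2), on $L_2(\cQ)$ the symbol of $H^2$ is $(-i\operatorname{sgn}\xi)^2=-1$, so $H^2=-\mathrm{Id}$ on $L_2(\cQ)$; density of $L_p(\cQ)\cap L_2(\cQ)$ in $L_p(\cQ)$ together with the $L_p$-boundedness of $H$ extends this to general $p$.  As a byproduct of (1), the operator $H$ is skew-adjoint in the sense that $\int_\cQ Hf\cdot g\,dX=-\int_\cQ f\cdot Hg\,dX$ for $f\in L_p(\cQ)$ and $g\in L_q(\cQ)$ with $1/p+1/q=1$; this is immediate by Plancherel on $L_2(\cQ)$ followed by density.

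For (4), fix $\varphi\in C_0^\infty(\cQ)$ and use skew-adjointness to get
\[
\int_\cQ H(D_t^{1/2}u)\,D_t^{1/2}\varphi\,dX=-\int_\cQ D_t^{1/2}u\cdot H(D_t^{1/2}\varphi)\,dX.
\]
A direct Fourier computation on $L_2$ shows that $\psi:=-H(D_t^{1/2}\varphi)$ lies in every $L_q(\cQ)$ and satisfies $D_t^{1/2}\psi=\varphi_t$ in the sense of Definition~\ref{weak form}, so by the extension of \eqref{eq0513_03} recorded in Remark~\ref{rem0506_1} the right-hand side equals $\int_\cQ u\,\varphi_t\,dX$, proving the first identity.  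The commutation $H(D_t^{1/2}u)=D_t^{1/2}(Hu)$ follows from the same template applied to $Hu$: skew-adjointness and the commutation of $H$ with $D_t^{1/2}$ on $C_0^\infty(\cQ)$ give $\int_\cQ Hu\cdot D_t^{1/2}\varphi\,dX=-\int_\cQ u\cdot D_t^{1/2}(H\varphi)\,dX=-\int_\cQ D_t^{1/2}u\cdot H\varphi\,dX=\int_\cQ H(D_t^{1/2}u)\cdot\varphi\,dX$, and then Definition~\ref{weak form} identifies $H(D_t^{1/2}u)$ with $D_t^{1/2}(Hu)$.  Parts~(5) and~(6) are proved identically, replacing $H$ by $D_{x_j}$ (whose skew-adjointness is standard integration by parts) or by observing that $D_t^{1/2}$ commutes with $D_{x_j}$ on $C_0^\infty(\cQ)$ via Remark~\ref{rem0515_1}.

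The main technical obstacle is that the intermediate test functions appearing in (4)--(6), such as $H(D_t^{1/2}\varphi)$ and $D_t^{1/2}(H\varphi)$, are smooth but not compactly supported in time, so Definition~\ref{weak form} does not literally apply to them.  The resolution is exactly the extension recorded in Remark~\ref{rem0506_1}, which upgrades \eqref{eq0513_03} to hold for all test functions in $L_q(\cQ)$ with $L_q(\cQ)$ half-time derivative; once this is in hand, parts (4), (5), (6) reduce to routine symbol manipulations.
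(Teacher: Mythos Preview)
Your proof is correct and follows the same overall strategy as the paper: establish the Fourier-symbol identities on $C_0^\infty(\cQ)$ and extend by density (Lemmas~\ref{Hilbert pp bound} and~\ref{lem0509_2}), then reduce (4)--(6) to symbol manipulations on test functions. The paper likewise proves the first identity in (4) by a Plancherel computation (written for $u\in C_0^\infty(\cQ)$ and extended by density), whereas you work directly with general $u$ via the skew-adjointness of $H$ and invoke Remark~\ref{rem0506_1}; these are equivalent packagings of the same density argument.

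The one place where the arguments genuinely diverge is (5): the paper proves $D_{x_j}H(u)=H(D_{x_j}u)$ by passing through the truncated operators $H^{(\varepsilon)}$ (so that Fubini is elementary) and using $H^{(\varepsilon)}(u)\to H(u)$ in $L_p$ from Lemma~\ref{Hilbert pp bound}, while you use the duality template, moving $H$ and $D_{x_j}$ onto the test function. Your route is slightly slicker and avoids the truncation altogether; the paper's route has the advantage of not needing to check that integration by parts in $x$ is legitimate against the non--compactly-supported (in time) function $H\varphi$. For (6), the paper expands $D_t^{1/2}\varphi$ via the singular-integral formula and applies Fubini explicitly, whereas you simply cite the commutation $D_{x_j}D_t^{1/2}\varphi=D_t^{1/2}D_{x_j}\varphi$ from Remark~\ref{rem0515_1}; again these are the same computation, just with different levels of detail shown.
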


\begin{proof}
These assertions follow from the definitions of $H$ and $D_t^{1/2}$, along with \eqref{eq0519_01}, \eqref{eq0519_02}, and Lemmas \ref{Hilbert pp bound} and \ref{lem0509_2}.
In particular, if $u \in C_0^\infty(\cQ)$, then
\[
\int_\cQ H(D_t^{1/2}u) D_t^{1/2}\varphi \, dX = -\int_\Omega \int_\bR i \operatorname{sgn}(\xi) |\xi|^{1/2} \widetilde{u}(\xi,x) |\xi|^{1/2} \overline{\widetilde{\varphi}(\xi,x)} \, d\xi \, dx
\]
\[
= \int_\cQ \widetilde{u}(\xi,x) \, \overline{i \xi \widetilde{\varphi}(\xi,x)} \, d\xi \, dx = \int_\cQ u \, \varphi_t \, dX.
\]
Also see \cite[Chapter 5]{MR3243734}.
Below, we add some details for \eqref{eq0514_01} and \eqref{eq0515_02}.

For \eqref{eq0514_01}, denote $D_{x_j} = D_j$.
For $\varphi \in C_0^\infty(\cQ)$, by Lemma \ref{Hilbert pp bound}
\[
\int_\cQ D_j H (u) \, \varphi \, dX = - \int_\cQ H(u) \, D_j \varphi \, dX = - \lim_{\varepsilon \to 0} \int_\cQ H^{(\varepsilon)} (u) \, D_j \varphi \, dX,
\]
where
\[
\int_\cQ H^{(\varepsilon)} (u) \, D_j \varphi \, dX = \int_\cQ \frac{1}{\pi} \int_{|s| \geq \varepsilon} \frac{u(t-s,x)}{s} \, ds \, D_j \varphi \, dX
\]
\[
= - \frac{1}{\pi} \int_{|s| \geq 
\varepsilon} \int_\cQ \frac{D_j u(t-s,x)}{s} \varphi(t,x) \, dX \, ds \to  - \int_\cQ H (D_ju) \, \varphi \, dX,
\]
as $\varepsilon \to 0$.
This proves \eqref{eq0514_01}.

To see \eqref{eq0515_02}, 
for $\varphi \in C_0^\infty(\cQ)$, we note that
\[
\int_\cQ D_j u \, D_t^{1/2}\varphi \, dX = \int_\cQ D_j u(t,x) \frac{1}{\sqrt{8\pi}}\int_\bR \frac{\varphi(t+\ell,x)-\varphi(t,x)}{|\ell|^{3/2}} \, d\ell \, dX
\]
\[
= \frac{1}{\sqrt{8\pi}} \int_\bR |\ell|^{-3/2} \int_Q \left(\varphi(t+\ell,x)-\varphi(t,x)\right) D_j u(t,x) \, dX \, d\ell
\]
\[
= - \frac{1}{\sqrt{8\pi}} \int_\bR |\ell|^{-3/2} \int_Q \left(D_j \varphi(t+\ell,x)- D_j \varphi(t,x)\right) u(t,x) \, dX \, d\ell
\]
\[
= - \int_\cQ u \, D_t^{1/2} D_j \varphi \, dX = - \int_\cQ D_t^{1/2}u \, D_j \varphi \, dX,
\]
which implies \eqref{eq0515_02}.
\end{proof}

\begin{lemma}
							\label{lem0515_1}
Let $1< p<\infty$ and $u \in L_p(\cQ)$ be such that $u_t \in L_p(\cQ)$.
Then, $D_t^{1/2}u$ and $D_t^{1/2}(D_t^{1/2}u)$ are in $L_p(\cQ)$.
\end{lemma}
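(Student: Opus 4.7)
My plan is to handle the two claims in sequence, first producing an explicit integral representation for $D_t^{1/2}u$ and then using that representation together with the Hilbert transform to treat $D_t^{1/2}(D_t^{1/2}u)$.

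For $D_t^{1/2}u\in L_p(\cQ)$, I would introduce the candidate
\[
v(t,x) := \frac{1}{\sqrt{8\pi}}\int_\bR \frac{u(t+\ell,x) - u(t,x)}{|\ell|^{3/2}}\,d\ell
\]
and split the integral at $|\ell|=1$. For $|\ell|\ge 1$ the triangle inequality together with $|\ell|^{-3/2}\chi_{\{|\ell|\ge 1\}}\in L_1(\bR)$ bounds the contribution in $L_p(\cQ)$ by $N\|u\|_{L_p(\cQ)}$ via Young's inequality in $t$. For $|\ell|<1$ the hypothesis $u_t\in L_p(\cQ)$ allows me, for a.e.\ $x$, to write $u(t+\ell,x)-u(t,x)=\int_0^\ell u_t(t+s,x)\,ds$; swapping the order of integration shows the remaining piece is dominated by a convolution in $t$ against the $L_1(\bR)$ kernel $|s|^{-1/2}\chi_{(-1,1)}(s)$ applied to $|u_t|$, giving an $L_p(\cQ)$ bound by $N\|u_t\|_{L_p(\cQ)}$. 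In particular $v$ is finite a.e.\ and $v\in L_p(\cQ)$. To match $v$ with $D_t^{1/2}u$ in the sense of Definition \ref{weak form}, I would test against $\varphi\in C_0^\infty(\cQ)$, invoke Fubini (justified by the $L_p$-bounds just established), and exploit the evenness of $|\ell|^{-3/2}$ to transfer the difference quotient from $u$ onto $\varphi$, obtaining $\int_\cQ v\varphi\,dX = \int_\cQ u\,D_t^{1/2}\varphi\,dX$.

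For $D_t^{1/2}(D_t^{1/2}u)\in L_p(\cQ)$, the natural candidate is $w:=H(u_t)$, which lies in $L_p(\cQ)$ by Lemma \ref{Hilbert pp bound}. Testing against $\varphi\in C_0^\infty(\cQ)$ and setting $\psi:=D_t^{1/2}\varphi$, Remark \ref{rem0515_1} gives $\psi\in L_q(\cQ)$ and $\psi_t=D_t^{1/2}(\varphi_t)\in L_q(\cQ)$, so the first step of the present lemma applied to $\psi$ yields $D_t^{1/2}\psi\in L_q(\cQ)$; a Fourier comparison via \eqref{eq0519_01} and \eqref{eq0519_02} identifies it with $H(\varphi_t)=(H\varphi)_t$. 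Remark \ref{rem0506_1} then produces
\[
\int_\cQ D_t^{1/2}u\cdot D_t^{1/2}\varphi\,dX = \int_\cQ u\cdot D_t^{1/2}\psi\,dX = \int_\cQ u\,(H\varphi)_t\,dX.
\]
A one-dimensional integration by parts in $t$ (for a.e.\ $x$, $u(\cdot,x)(H\varphi)(\cdot,x)$ is absolutely continuous with $L_1(\bR)$ derivative and hence vanishes at $\pm\infty$) combined with the anti-adjointness $\int fHg\,dX = -\int (Hf)g\,dX$ of the Hilbert transform (a direct consequence of \eqref{eq0519_01}) gives
\[
\int_\cQ u\,(H\varphi)_t\,dX = -\int_\cQ u_t\,H\varphi\,dX = \int_\cQ H(u_t)\,\varphi\,dX,
\]
so $D_t^{1/2}(D_t^{1/2}u)=H(u_t)\in L_p(\cQ)$ by Definition \ref{weak form}.

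The main obstacle I anticipate is the $L_p$-control of $v$ in the first step: the singularity $|\ell|^{-3/2}$ near $\ell=0$ sits exactly at the borderline where the identity $u(t+\ell,x)-u(t,x)=\int_0^\ell u_t(t+s,x)\,ds$ reduces the kernel to the integrable $|s|^{-1/2}\chi_{(-1,1)}(s)$, and any weakening of the hypothesis $u_t\in L_p$ would break this. The remaining pieces---verifying that $u(\cdot,x)$ is absolutely continuous in $t$ for a.e.\ $x$, justifying the two applications of Fubini, and executing the one-dimensional integration by parts---are routine once the first step is in hand.
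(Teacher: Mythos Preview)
Your proof is correct and lands on the same formula $D_t^{1/2}(D_t^{1/2}u)=H(u_t)$ as the paper, but the execution differs. For the first claim both you and the paper split the defining integral at $|\ell|=1$ and bound the two pieces by convolutions against $|\ell|^{-3/2}\chi_{\{|\ell|\ge 1\}}$ and $|s|^{-1/2}\chi_{(-1,1)}$; the paper, however, carries out this estimate for $\varphi\in C_0^\infty(\cQ)$ to obtain $\|D_t^{1/2}\varphi\|_p\le N(\|\varphi_t\|_p+\|\varphi\|_p)$ and then passes to $u$ by approximation, whereas you work directly with $u$ and verify Definition~\ref{weak form} via Fubini. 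For the second claim the paper again uses the approximating sequence $u^k\in C_0^\infty(\cQ)$ and a one-line Fourier computation to get $\int D_t^{1/2}u^k\,D_t^{1/2}\varphi=\int H(\partial_t u^k)\,\varphi$, then lets $k\to\infty$. Your route---applying the first part to $\psi=D_t^{1/2}\varphi$, invoking Remark~\ref{rem0506_1}, identifying $D_t^{1/2}\psi=(H\varphi)_t$ in Fourier, integrating by parts in $t$, and using the anti-self-adjointness of $H$---is longer but has the virtue of avoiding the approximation step entirely; the paper's approach is shorter because the Fourier identity is immediate on smooth compactly supported functions.
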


\begin{proof}
If $\varphi \in C_0^\infty(\cQ)$, then
\[
\sqrt{8\pi}|D_t^{1/2}\varphi(t,x)| \leq \int_{|\ell|\leq 1} \frac{\left|\varphi(t+\ell,x)-\varphi(t,x)\right|}{|\ell|^{3/2}} \, d\ell
\]
\[
+ \int_{|\ell| > 1} \frac{\left|\varphi(t+\ell,x)-\varphi(t,x)\right|}{|\ell|^{3/2}} \, d\ell \leq \int_{|\ell| \leq 1}|\ell|^{-3/2} \int_0^\ell |\partial_t \varphi(t+s,x)| \, ds \, d\ell
\]
\[
+ \int_{|\ell| > 1} |\ell|^{-3/2} \left(|\varphi(t+\ell,x)| + |\varphi(t,x)|\right) \, d\ell.
\]
This shows that
\begin{equation}
							\label{eq0523_01}
\|D_t^{1/2}\varphi\|_{L_p(\cQ)} \leq N \|\partial_t \varphi\|_{L_p(\cQ)} + N \|\varphi\|_{L_p(\cQ)}.
\end{equation}
Now, if $u, u_t \in L_p(\cQ)$, one can find a sequence $u^k \in C_0^\infty(\cQ)$ such that
\[
u^k \to u, \quad \partial_t u^k \to \partial_t u \quad \text{in} \,\, L_p(\cQ),
\]
respectively, as $k \to \infty$.
This approximation with the estimate \eqref{eq0523_01} proves that $D_t^{1/2}u \in L_p(\cQ)$.

For any $\varphi \in C_0^\infty(\cQ)$, using Fourier transforms with \eqref{eq0519_01} and \eqref{eq0519_02}, we see that
\[
\int_\cQ D_t^{1/2} u^k \,  D_t^{1/2}\varphi \, dX
= \int_\cQ |\xi| \widetilde{u^k}(\xi,x) \overline{\widetilde \varphi(\xi,x)} \, d\xi \, dx
\]
\[
= \int_\cQ (-i)\operatorname{sgn}(\xi) \, i  \xi \, \widetilde{u^k}(\xi,x) \, \overline{\widetilde{\varphi}(\xi,x)} \, d\xi \, dx
= \int_\cQ H(\partial_t u^k) \, \varphi \, dX.
\]
Since $D_t^{1/2} u^k \to D_t^{1/2}u$ in $L_p(\cQ)$ from the  proof above, by letting $k \to \infty$ and considering the boundedness of the operator $H$, we have
\[
\int_\cQ D_t^{1/2}u \, D_t^{1/2}\varphi \, dX = \int_\cQ H(\partial_t u) \, \varphi \, dX,
\]
which shows that $D_t^{1/2}(D_t^{1/2}u) = H(\partial_t u) \in L_p(\cQ)$.
\end{proof}

\section{$L_2$-estimates}\label{sec L2 estimates}

In this section, we consider the operator
\[
\cP_\lambda:=u_t-D_i(a_{ij}D_j u)+\lambda u,
\]
where $a_{ij}$ satisfy the conditions in \eqref{eq0521_02} without any regularity assumptions on $a_{ij}$.
With the operator $\cP_\lambda$, in this section we present the $L_2$-solvability, which is essentially proved in \cite{MR0200593} and also seen in, for instance, \cite{MR3906170, MR4127944}.
For completeness, we provide a detailed proof.

\begin{theorem} \label{Interior L_2 solvability}
 Let $\Omega$ be a domain and $\cQ=\bR\times \Omega$. There exists $N=N(d,\delta)$ such that, for any $\lambda\ge 0$,
\begin{equation}
        \label{L_2 estiamte}
\|D_t^{1/2}u\|_2 + \|Du\|_2 + \sqrt{\lambda}\|u\|_2 \le N\left( \|h\|_2 + \|g_i\|_2 + \lambda^{-1/2}\|f\|_2\right),
\end{equation}
where $\|\cdot\|_2 = \|\cdot\|_{L_2(\cQ)}$,
provided that $u\in\mathring H^{1/2,1}_2(\cQ)$, $h, g_i, f \in L_2(\cQ)$, $i=1,\ldots,d$, with $f \equiv 0$ if $\lambda = 0$, and 
\begin{equation}\label{L_2 estimate eq}
\cP_\lambda u=D_t^{1/2}h+D_ig_i+f \quad \text{in} \,\, \cQ.
\end{equation}
Furthermore, for any $\lambda>0$, $h, g_i, f \in L_2(\cQ)$, $i=1,\ldots,d$, there exists a unique solution $u\in  \mathring{H}^{1/2,1}_2(\cQ)$ to the equation \eqref{L_2 estimate eq}. 
\end{theorem}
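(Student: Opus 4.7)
The plan is to apply the Lax--Milgram theorem, adapted through Kaplan's Hilbert-transform trick. Equip $V := \mathring H_2^{1/2,1}(\cQ)$ with the natural Hilbert-space inner product; by density (Lemma \ref{lem0509_2}) and $L_2$-continuity of every term, the weak formulation \eqref{eq0523_04} extends to all test functions $\varphi \in V$ and reads $B(u,\varphi) = L(\varphi)$, where
\[
B(u,\varphi) = \int_\cQ -H(D_t^{1/2}u)\,D_t^{1/2}\varphi + a_{ij}D_ju\,D_i\varphi + \lambda u\varphi\,dX.
\]
The key identities come from Plancherel together with parts (1)--(3) of Lemma \ref{lem0514_1}: since $u$ is real and $\xi \mapsto i\,\mathrm{sgn}(\xi)|\xi|$ is odd while $|\tilde u|^2$ is even,
\[
\int_\cQ H(D_t^{1/2}u)\,D_t^{1/2}u\,dX = 0, \qquad \int_\cQ H(D_t^{1/2}u)\,D_t^{1/2}H(u)\,dX = \|D_t^{1/2}u\|_{L_2(\cQ)}^2.
\]
Thus the leading time term drops from $B(u,u)$, but pairing it with $-H(u)$ produces a positive $\|D_t^{1/2}u\|_2^2$.

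For the a priori estimate \eqref{L_2 estiamte}, I would test with $\varphi = u - \theta H(u) \in V$ for a small $\theta > 0$; this is admissible because $H$ is bounded on $L_2$ with constant $1$ (Lemma \ref{Hilbert pp bound}) and commutes with $D_j$ and $D_t^{1/2}$ (Lemma \ref{lem0514_1}(4)--(6)), so $H(V)\subset V$. Using the two identities above, the ellipticity \eqref{eq0521_02}, the parity relation $\int u H(u)\,dX = 0$, and the cross-term bound $|\int a_{ij}D_ju\,H(D_iu)\,dX| \leq \delta^{-1}\|Du\|_2^2$ yields
\[
B(u, u - \theta H(u)) \geq \theta\|D_t^{1/2}u\|_2^2 + (\delta - \theta\delta^{-1})\|Du\|_2^2 + \lambda\|u\|_2^2,
\]
which becomes coercive upon choosing $\theta = \delta^2/2$. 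On the right-hand side, Cauchy--Schwarz together with $\|H\|_{L_2 \to L_2} = 1$ gives
\[
|L(u - \theta H(u))| \leq (1+\theta)\bigl(\|h\|_2\|D_t^{1/2}u\|_2 + \|g\|_2\|Du\|_2 + \|f\|_2\|u\|_2\bigr),
\]
and Young's inequality (writing $\|f\|_2\|u\|_2 = (\|f\|_2/\sqrt{\lambda})(\sqrt{\lambda}\|u\|_2)$ when $\lambda>0$, and using $f \equiv 0$ when $\lambda = 0$) permits absorption of small multiples of $\|D_t^{1/2}u\|_2^2$, $\|Du\|_2^2$, $\lambda\|u\|_2^2$ into the coercive side, producing \eqref{L_2 estiamte} uniformly in $\lambda \geq 0$.

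For existence and uniqueness when $\lambda > 0$, apply Lax--Milgram on $V \times V$ to the modified bilinear form $\tilde B(u,\varphi) := B(u,\varphi - \theta H(\varphi))$ and functional $\tilde L(\varphi) := L(\varphi - \theta H(\varphi))$: boundedness is immediate from Lemmas \ref{Hilbert pp bound} and \ref{lem0514_1}, and coercivity $\tilde B(u,u) \geq c\|u\|_V^2$ is precisely the computation above (with $\lambda\|u\|_2^2$ supplying the missing $L_2$-direction). Lax--Milgram then produces a unique $u \in V$ with $\tilde B(u,\varphi) = \tilde L(\varphi)$ for every $\varphi \in V$. To recover the original form $B(u,\psi) = L(\psi)$, note that $H^2 = -I$ on $L_2$ (Lemma \ref{lem0514_1}(2)) gives $(I - \theta H)(I + \theta H) = (1+\theta^2)I$, so $I - \theta H$ is invertible on $V$ with inverse $(1+\theta^2)^{-1}(I + \theta H)$; given $\psi \in V$, the choice $\varphi = (1+\theta^2)^{-1}(\psi + \theta H(\psi)) \in V$ satisfies $\varphi - \theta H(\varphi) = \psi$, whence $B(u,\psi) = \tilde B(u,\varphi) = \tilde L(\varphi) = L(\psi)$; uniqueness follows from the a priori estimate. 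The principal obstacle is essentially bookkeeping: verifying $H(V) \subset V$ and making the Plancherel identities rigorous for $u \in V$ rather than $C_0^\infty(\cQ)$, both of which reduce to approximating $u$ by $u^k \in C_0^\infty(\cQ)$ in the $V$-norm and using the continuity of $H$, $D_t^{1/2}$, and $D_j$.
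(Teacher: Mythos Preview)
Your proposal is correct and follows essentially the same argument as the paper: both define the modified bilinear form $B(u,(1-\theta H)\varphi)$ (the paper's $\cB_\kappa$), establish its coercivity via the same Fourier identities with the same choice $\theta=\kappa=\delta^2/2$, apply Lax--Milgram, and then invert $I-\theta H$ using $H^2=-I$ via $\varphi=(1+\theta^2)^{-1}(I+\theta H)\psi$ to recover the original weak formulation. The only cosmetic differences are that you invoke the parity relation $\int u\,H(u)\,dX=0$ (the paper simply bounds this term by $\lambda\|u\|_2^2$) and you phrase the a priori estimate through Young's inequality rather than the paper's one-line chain $\|U\|^2\le N\cB_\kappa[u,u]=N\langle\cF_\kappa,u\rangle\le N\|F\|\,\|U\|$.
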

\begin{proof}
First, we consider the solvability.
Let $u,v\in {\mathring{H}_2^{1/2,1}(\cQ)}$. We define 
\[
\langle \cP_\lambda u,v\rangle
:= \int_\cQ {-H(D_t^{1/2}u)} \,D_t^{1/2}v+a_{ij}D_i uD_j v +\lambda uv \,dX
\]
and 
\[
\cB_\kappa[u,v]:=\langle \cP_\lambda u, (1 -\kappa H)v\rangle,
\]
where $\kappa$ is a positive constant and, in particular, $\langle\cP_\lambda u,\kappa H(v)\rangle$ is well-defined because $H(v) \in L_2(\cQ)$ and $D_t^{1/2}H(v) = H(D_t^{1/2}v) \in L_2(\cQ)$ by Lemmas \ref{Hilbert pp bound} and \ref{lem0514_1}.
Using denseness of $C_0^\infty(\cQ)$ on $\mathring H_2^{1/2,1}(\cQ)$, we have 
\[
\langle \cP_\lambda u,v\rangle =\int_\cQ hD_t^{1/2}v-g_iD_i v+f v\,dX=:\langle \cF,v\rangle
\]
if $u$ is a solution to the equation \eqref{L_2 estimate eq}.
Set
\[
U = (D_t^{1/2}u, Du, \sqrt{\lambda}u ) \quad V = ( D_t^{1/2}v, Dv, \sqrt{\lambda}v ).
\]
To apply the Lax-Milgram theorem, we claim that for sufficiently small $\kappa$, we have 
\begin{equation}
    \label{claim L2}
N\cB_\kappa[u,u]\ge
    \|U\|_{L_2(\cQ)}^2
\end{equation}
and 
\begin{equation}
    \label{claim L2 2}
\cB_\kappa[u,v]
\le
N\|U\|_{L_2(\cQ)}\|V\|_{L_2(\cQ)},
\end{equation}
where $N = N(d,\delta)$.
From the denseness of $C_0^\infty(\cQ)$ with respect to $\mathring H_2^{1/2,1}(\cQ)$, we may assume that $u\in C_0^\infty(\cQ)$ to prove the claim.
Upon noticing that
\[
\int_\cQ H(D_t^{1/2}u) \, D_t^{1/2}u \, dX = \int_\cQ u \, u_t \, dX = 0,
\]
where the first equality is due to Lemma \ref{lem0514_1},
we see that 
\[
\langle\cP_\lambda u,u\rangle \geq \delta \int_\cQ |Du|^2\,dX+\lambda\int_\cQ|u|^2\,dX.
\]
From Lemmas \ref{Hilbert pp bound} and \ref{lem0514_1} along with H\"{o}lder's inequality, we also have
\[
\langle\cP_\lambda u, -\kappa Hu\rangle
\ge 
\kappa\int_\cQ|D_t^{1/2}u|^2\,dX
-\delta^{-1}\kappa\int_\cQ|Du|^2\,dX
-\kappa \lambda \int_\cQ |u|^2.
\]
Then, by taking $\kappa = \delta^2/2$, we have \eqref{claim L2}.
For the estimate \eqref{claim L2 2}, we use H\"older's inequality and Lemmas \ref{Hilbert pp bound} and \ref{lem0514_1}.

Therefore, when $\lambda > 0$, by using the Lax-Milgram theorem, we have that there is a unique $u\in \mathring H_2^{1/2,1}(\cQ)$ such that for all $w\in \mathring H_2^{1/2,1}(\cQ)$,
\[
\cB_\kappa[u,w]=\langle \cF_\kappa,w\rangle,
\]
where 
\[
\langle \cF_\kappa,w\rangle:=\langle \cF, (1-\kappa H)w\rangle.
\]
For $v\in \mathring H_2^{1/2,1}(\cQ)$, let $w=(1+\kappa H)v/(1+\kappa^2)$, so that $(1-\kappa H)w=v$ because $H^2= -I$ (see Lemma \ref{lem0514_1}). Then, we have
\[
\langle \cP_\lambda u,v\rangle=\cB_\kappa [u,w]=\langle \cF_\kappa,w\rangle=\langle \cF,v\rangle.
\]
This implies that $u$ is a solution to the equation \eqref{L_2 estimate eq}.

To derive the estimate \eqref{L_2 estiamte} including the case $\lambda =0$, we use \eqref{claim L2} and H\"older's inequality to arrive at
\[
\|U\|_{L_2(\cQ)}^2\le N\cB_\kappa [u,u]=N\langle \cF_\kappa, u\rangle \le N\||h|+|g_i|+\lambda^{-1/2}|f|\|_{L_2(\cQ)}\|U\|_{L_2(\cQ)}.
\]
This completes the proof.
 \end{proof}

We now establish a local estimate.
Recall the notation $(u)_\cD$ in \eqref{eq0521_01}.

\begin{lemma}
\label{local L_2 estimate lemma}
     Let $R\in (0,\infty)$, and  $\cQ=(-\infty,\infty)\times B_R$. There exists $N=N(d,\delta)$ such that, for any $\lambda\ge 0$,
\[
\left(|D_t^{1/2}u|^2 + |Du|^2 + \lambda |u|^2\right)^{1/2}_{Q_R} \leq N \sum_{j=0}^\infty \left(|h|^2 + |g_i|^2 + \lambda^{-1}|f|^2\right)^{1/2}_{Q_{2^{j/2}R,R}},
\]
provided that $u\in\mathring H^{1/2,1}_2(\cQ)$, $h, g_i, f \in L_2(\cQ)$ with $f \equiv 0$ if $\lambda = 0$, and 
    \begin{equation*}
   \cP_\lambda u=D_t^{1/2}h+D_ig_i+f \quad \text{in} \,\, \cQ.
    \end{equation*}  
\end{lemma}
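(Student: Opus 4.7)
The plan is to use a temporal cutoff combined with the global $L_2$-solvability (Theorem \ref{Interior L_2 solvability}) and control the resulting commutator terms via the tail estimate (Lemma \ref{tail estimate}). By the scaling $(t,x)\mapsto (R^2 t, R x)$, it suffices to prove the estimate for $R=1$ on $\cQ=\bR\times B_1$. Let $\zeta\in C_0^\infty(\bR)$ satisfy $\zeta=1$ on $(-1,1)$, $\operatorname{supp}\zeta\subset(-2,2)$, and $|\zeta'|\le N$. Set $v=\zeta u\in \mathring H_2^{1/2,1}(\cQ)$. Using the distributional commutation identity (dual of Lemma \ref{lem0513_1})
\[
\zeta\,D_t^{1/2} h = D_t^{1/2}(\zeta h)-L(h,\zeta),\qquad L(h,\zeta)(t,x):=\tfrac{1}{\sqrt{8\pi}}\int_\bR h(t+\ell,x)\tfrac{\zeta(t+\ell)-\zeta(t)}{|\ell|^{3/2}}\,d\ell,
\]
one checks that $v$ satisfies $\cP_\lambda v = D_t^{1/2}(\zeta h)+D_i(\zeta g_i)+\zeta f+\zeta_t u-L(h,\zeta)$ in $\cQ$.

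Because the term $\zeta_t u$ cannot be absorbed into $\sqrt{\lambda}\|u\|_2$ when $\lambda=0$, I use the observation that $u$ also satisfies $\cP_{\lambda+1}u=D_t^{1/2}h+D_i g_i+(f+u)$, so $v$ obeys an analogous equation with an extra $\zeta u$ term on the right. Applying Theorem \ref{Interior L_2 solvability} with parameter $\lambda+1\geq 1$ (so that $1/\sqrt{\lambda+1}\le 1$) yields
\[
\|V\|_{L_2(\cQ)} \le N\bigl\{\|\zeta h\|_2+\|\zeta g\|_2+\tfrac{\|\zeta f\|_2}{\sqrt{\lambda+1}}+\|(\zeta+\zeta_t)u\|_2+\|L(h,\zeta)\|_2\bigr\}
\]
for $V=(D_t^{1/2}v,Dv,\sqrt{\lambda+1}\,v)$. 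Since $v=u$ on $Q_1$ and $D_t^{1/2}v=D_t^{1/2}u+L(u,\zeta)$ there by Lemma \ref{lem0513_1}, this gives $\|U\|_{L_2(Q_1)}\le N\|V\|_{L_2(\cQ)}+\|L(u,\zeta)\|_{L_2(\cQ)}$.

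The terms on the right split into ``good'' ($F$-only) and ``bad'' ($u$-only) contributions. For the good terms: $\|\zeta h\|_2\le \|h\|_{L_2(Q_{\sqrt 2,1})}$ and similarly for $\zeta g,\zeta f/\sqrt{\lambda+1}$ give the $j=1$ entry of the target sum; and Lemma \ref{tail estimate} with $k=0,\,p=2$ gives $\|L(h,\zeta)\|_2\le N\sum_{j\ge 1}2^{-j}\|h\|_{L_2(Q_{2^{j/2},1})}$, which in averaged form is dominated by $N\sqrt{|Q_1|}\sum_j 2^{-j/2}(|h|^2)^{1/2}_{Q_{2^{j/2},1}}\le N\sqrt{|Q_1|}\sum_j 2^{-j/4}(|h|^2)^{1/2}_{Q_{2^{j/2},1}}$. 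For the bad terms, Poincar\'e's inequality (valid since $u$ vanishes on $\partial B_1$) converts $\|u\|_{L_2(S)}$ into $\|Du\|_{L_2(S)}\le \|U\|_{L_2(S)}$ on any $t$-slab $S$; together with Lemma \ref{tail estimate}, this reduces these contributions to a tail sum of the form $\sum_{j\ge 0}2^{-j}\|U\|_{L_2(Q_{2^{j/2},1})}$.

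The main obstacle is the final closure: converting the tail sum of $U$-norms on enlarged cylinders into an $F$-weighted sum with decay $2^{-j/4}$. A naive application of the global estimate $\|U\|_{L_2(\cQ)}\le N\|F\|_{L_2(\cQ)}$ is too lossy, since expanding $\|F\|_{L_2(\cQ)}$ via an annular time decomposition produces the wrong weight $2^{j/2}$ from volume growth. The resolution is to rerun the cutoff argument with the scaled cutoffs $\zeta_k$ from Lemma \ref{tail estimate} at each dyadic scale $k\ge 0$: the prefactor $2^{-k/2}$ in Lemma \ref{tail estimate} provides additional decay that, balanced against the volume growth $2^{j/2}$ and the $2^{-j}$ already present in the Poincar\'e/tail bounds, yields the net weight $2^{-j/4}$. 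Carefully executing this dyadic balancing and verifying that the iteration closes is the technical heart of the proof.
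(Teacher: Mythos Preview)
Your strategy---dyadic temporal cutoffs, the global $L_2$ estimate (Theorem~\ref{Interior L_2 solvability}), the commutator bound of Lemma~\ref{tail estimate}, and Poincar\'e on $B_1$ to convert $u$ into $Du$---is exactly the paper's. But your handling of small $\lambda$ has a genuine gap.

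By shifting uniformly to $\lambda+1$ you introduce, at every scale $k$, the extra source $\zeta_k u$, hence the term $\|\zeta_k u\|_{L_2}/\sqrt{\lambda+1}$ on the right. The $2^{-k/2}$ prefactor from Lemma~\ref{tail estimate} that you invoke helps only with the commutators $L(u,\zeta_k)$ and $L(h,\zeta_k)$; it does nothing for this zeroth-order term, which carries \emph{no} decay in $k$. After Poincar\'e and conversion to averages one obtains a recursion of the shape $A_k\le (N/\sqrt{\lambda+1})\,A_{k+1}+\cdots$, which closes only if $N/\sqrt{\lambda+1}<1$, i.e.\ only for large $\lambda$; for $\lambda=0$ it fails. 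The paper instead shifts by a \emph{scale-dependent} amount $\lambda_k:=\max(2^{-k}-\lambda,0)$, so that $\lambda+\lambda_k=\max(\lambda,2^{-k})$. The added source is then $\lambda_k u\eta_k$, of size $\le 2^{-k}\|u\eta_{k+1}\|$, and since $1/\sqrt{\lambda+\lambda_k}\le 2^{k/2}$ one gains precisely the factor $2^{-k/2}$ needed to match the decay of $(\eta_k)_t u$ and of the commutators.

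Separately, the closure you defer (``carefully executing this dyadic balancing'') is not just bookkeeping. Once one has
\[
A_k\le N_1\sum_{j>k}2^{-j/2}A_j+N_1\,2^{k/2}\sum_{j>k}2^{-j/2}F_j,
\]
the mechanism is: multiply by $2^{-k/2}$, sum over $k\ge k_0$ with $k_0$ chosen so that $N_1\sum_{k\ge k_0}2^{-k/2}\le\tfrac12$, swap the order of summation to absorb the $A$-sum into the left, and then iterate the recursion downward from $k_0$ to $k=0$. Without this absorption argument spelled out, the proof is incomplete.
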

\begin{proof}
Thanks to scaling with the fact that
\[
D_t^{1/2}\left( u(R^2 t,Rx) \right) = R (D_t^{1/2} u)(R^2t,Rx),
\]
we may assume that $R=1$.
Let $\lambda_k=\max(2^{-k}-\lambda,0)$.
Using $\eta_k$ in \eqref{eq0513_02},
we observe that
\[
(u\eta_k)_t-D_i(a_{ij}D_j(u\eta_k))+(\lambda +\lambda_k)u\eta_k
    =D_t^{1/2}(h\eta_k)+D_i(g_i\eta_k)+f\eta_k   
\]
\begin{equation*}
 + h_k +u(\eta_k)_t+\lambda_ku\eta_k,
 \end{equation*}
in $\cQ$, where
\[
h_k(t,x) = -\frac{1}{\sqrt{8\pi}}\int_\bR h(t+\ell,x)\frac{\eta_k(t+\ell)-\eta_k(t)}{|\ell|^{3/2}} \, d \ell,
\]
which can also be written as
\begin{equation*}
h_k = \eta_k D_t^{1/2}h - D_t^{1/2}(h \eta_k),
\end{equation*}
provided that $D_t^{1/2}h \in L_2(\cQ)$.
By applying Theorem \ref{Interior L_2 solvability} to $u\eta_k$ and using \eqref{cut off bound} as well as the equality
\[
u(\eta_k)_t + \lambda_k u \eta_k = u(\eta_k)_t \eta_{k+1} + \lambda_k u \eta_k \eta_{k+1}
\]
in $\cQ$, we see that 
\[
\|D_t^{1/2}(u\eta_k)\|_{L_2(\cQ)}+\|Du\eta_k\|_{L_2(\cQ)}+\sqrt{\lambda}\|u\eta_k\|_{L_2(\cQ)}
\]
\[
\le \|D_t^{1/2}(u\eta_k)\|_{L_2(\cQ)}+\|Du\eta_k\|_{L_2(\cQ)}+\sqrt{\lambda+\lambda_k}\|u\eta_k\|_{L_2(\cQ)}
\]
\[
\le 
N\|h\eta_k\|_{L_2(\cQ)}
+N\|g\eta_k\|_{L_2(\cQ)}
+\frac{N}{\sqrt{\lambda+\lambda_k}}\|f\eta_k\|_{L_2(\cQ)}
\]
\[
+\frac{N}{\sqrt{\lambda+\lambda_k}}\|{h_k}\|_{L_2(\cQ)}+\frac{N(2^{-k}+\lambda_k)}{\sqrt{\lambda+\lambda_k}}\|u\eta_{k+1}\|_{L_2(\cQ)}
\]
\[
\le N\|h\eta_k\|_{L_2(\cQ)}
+N\|g\eta_k\|_{L_2(\cQ)}
+\frac{N}{\sqrt{\lambda}}\|f\eta_k\|_{L_2(\cQ)}
\]
\[
+N2^{k/2}\|h_k\|_{L_2(\cQ)}
+N2^{-k/2}\|u\eta_{k+1}\|_{L_2(\cQ)},
\]
where the last inequality is from the choice of $\lambda_k$.
Using the above estimates and the triangular inequality, we arrive at 
\[
\|\eta_k D_t^{1/2}u \|_{L_2(\cQ)}+\|Du\eta_k\|_{L_2(\cQ)}+\sqrt{\lambda}\|u\eta_k\|_{L_2(\cQ)}
\]
\[
\le 
N\|h\eta_k\|_{L_2(\cQ)}
+N\|g\eta_k\|_{L_2(\cQ)}
+\frac{N}{\sqrt{\lambda}}\|f\eta_k\|_{L_2(\cQ)}
\]
\[
+N2^{k/2}\|h_k\|_{L_2(\cQ)}
+N2^{-k/2}\|u\eta_{k+1}\|_{L_2(\cQ)}
\]
\begin{equation}
\label{local 13}
   +N\|D_t^{1/2}(u\eta_k)-\eta_k D_t^{1/2}u \|_{L_2(\cQ)}.   
\end{equation}

By Lemma \ref{tail estimate}, the following estimates hold:
\begin{equation}
    \label{local 21}
    \|D_t^{1/2}(u\eta_k)- {\eta_k}D_t^{1/2}u \|_{L_2(\cQ)}
    \le N2^{-k/2}
\sum_{j=1}^{\infty}2^{-j}\|u\|_{L_2((-2^{k+j},2^{k+j})\times B_1)}
\end{equation}
and
\begin{equation}
    \label{local 22}
    \|h_k\|_{L_2(\cQ)}
    \le N2^{-k/2}
\sum_{j=1}^{\infty}2^{-j}\|h\|_{L_2((-2^{k+j},2^{k+j})\times B_1)}.
\end{equation}
By combining \eqref{local 13}, \eqref{local 21}, and \eqref{local 22}, and setting $F = (h, g_i, \lambda^{-1/2}f)$, we establish that 
\[
\|D_t^{1/2}u \|_{L_2((-2^k,2^k)\times B_1)}+\|Du\|_{L_2((-2^k,2^k)\times B_1)}+\sqrt{\lambda}\|u\|_{L_2((-2^k,2^k)\times B_1)}
\]
\[
\le 
\|\eta_k D_t^{1/2}u \|_{L_2(\cQ)}+\|Du\eta_k\|_{L_2(\cQ)}+\sqrt{\lambda}\|u\eta_k\|_{L_2(\cQ)}
\]
\[
\le N2^{-k/2}
\sum_{j=1}^{\infty}2^{-j}\|u\|_{L_2((-2^{k+j},2^{k+j})\times B_1)}+ N
\sum_{j=1}^{\infty}2^{-j}\|F\|_{L_2((-2^{k+j},2^{k+j})\times B_1)}
\]\[
\le N2^{-k/2}
\sum_{j=1}^{\infty}2^{-j}\|Du\|_{L_2((-2^{k+j},2^{k+j})\times B_1)}+ N
\sum_{j=1}^{\infty}2^{-j}\|F\|_{L_2((-2^{k+j},2^{k+j})\times B_1)},
\]
where we use the zero boundary condition and the Sobolev embedding theorem for $u$ to derive the last inequality.
By multiplying the above inequality by $2^{-k/2}$, we have
\begin{equation}
\label{local 23}
A_k
\le
N_1\sum_{j=k+1}^{\infty}2^{-j/2}A_j
+N_1 2^{k/2}\sum_{j=k+1}^{\infty}2^{-j/2}F_j,
\end{equation}
where  $A_j:=\left(|D_t^{1/2}u|^2 + |Du|^2 + \lambda |u|^2\right)_{(-2^j,2^j)\times B_1}^{1/2}$, 
$F_j:=\left(|F|^2\right)_{(-2^j,2^j)\times B_1}^{1/2}$, $j=0,1,2,\dots$, and $N_1=N_1(d,\delta)$. We choose a positive integer $k_0$ such that
\[
N_1\sum_{k=k_0}^\infty 2^{-k/2}\le 1/2.
\]
By multiplying the above inequality by $2^{-k/2}$, and summing in $k=k_0,k_0+1,\dots$, we obtain that 
\[
\sum_{k=k_0}^{\infty}2^{-k/2}A_k
\le 
N_1\sum_{k=k_0}^{\infty}2^{-k/2}\sum_{j=k+1}^{\infty}2^{-j/2}A_j
+N_1\sum_{k=k_0}^{\infty}\sum_{j=k+1}^{\infty}2^{-j/2}F_j
\]
\[
=
N_1\sum_{j=k_0+1}^{\infty}\left(2^{-j/2}A_j\sum_{k=k_0}^{j-1}2^{-k/2}\right)
+N_1\sum_{j=k_0+1}^{\infty}\left(2^{-j/2}{F_j}\sum_{k=k_0}^{j-1}1\right)
\]
\[
\le \frac{1}{2}\sum_{j=k_0+1}^{\infty}2^{-j/2}A_j
+N_1\sum_{j=k_0+1}^{\infty}(j-k_0)2^{-j/2}F_j.
\]
\[
\le
\frac{1}{2}\sum_{j=k_0+1}^{\infty}2^{-j/2}A_j
+N\sum_{j=k_0+1}^{\infty}2^{-j/4}F_j.
\]
Thus, it follows that 
\[
\sum_{k=k_0}^{\infty}2^{-k/2}A_k
\le 
N\sum_{j=0}^{\infty}2^{-j/4}F_j.
\]
Finally, for the values $k=k_0-1, k_0-2, \dots, 0$, by utilizing the inequality \eqref{local 23} and induction, we finish the proof.
\end{proof}

\section{Mean oscillation estimates: time measurable $a_{ij}$}							\label{sec05}

In this section, we obtain mean oscillation estimates of solutions to equations as in \eqref{main eq} when the operator is
\[
\cP_\lambda u=u_t- D_i(a_{ij}D_j u)+\lambda u,
\]
where $a_{ij}=a_{ij}(t)$.
We emphasize that the coefficients $a_{ij}(t)$ are functions of only $t \in \bR$, satisfying the ellipticity and boundedness conditions in \eqref{eq0521_02}, but without any regularity conditions.

We begin by establishing Lipschitz estimates for $Du$ and $\sqrt{\lambda}u$ when $u$ is a solution to a homogeneous equation. Subsequently, we derive mean oscillation estimates for $Du$ and $\sqrt{\lambda}u$.
After that, for a homogeneous solution $u$, we obtain Lipschitz estimates for $D_t^{1/2}u$, $Du$, and $\sqrt{\lambda}u$ under the assumption that  $a_{ij}=\delta_{ij}$ and achieve mean oscillation estimates for these terms.

For a function $u$ defined on $\cD\subset \bR^{d+1}$, we denote, for $\mu\in(0,1]$, 
\[
[u]_{C^\mu(\cD)}=\underset{(t,x)\neq (s,y)}{\sup_{(t,x),(s,y)\in \cD}}\frac{|u(t,x)-u(s,y)|}{{|t-s|^{\mu/2}}+|x-y|^\mu}.
\]
We borrow the following $L_\infty$-estimate of the spatial derivatives of $u$ and $u_t$ from \cite{MR2771670}.

\begin{lemma}
\label{ARMA original Lemma 2}
Let $u\in W_2^{1,\infty}(Q_4)$ satisfy 
    \[
    \cP_0 u=0 \quad \text{in} \,\, Q_4.
    \]
    Then, for any multi-index $\alpha$, we have 
    \[
    \|D^\alpha u\|_{L_\infty(Q_1)}+\|D^\alpha u_t\|_{L_\infty(Q_1)}\le N\|u\|_{L_2(Q_4)},
    \]
    where $N=N(d,\delta,\alpha)$.
\end{lemma}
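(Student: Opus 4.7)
The plan is to exploit the fact that $a_{ij}$ depends only on $t$, so that the operator $\cP_0$ commutes with every spatial derivative. This reduces the problem to a standard iteration of Caccioppoli-type energy estimates followed by parabolic Sobolev embedding.

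First I would record the key observation: if $u$ solves $\cP_0 u = 0$, then for each multi-index $\beta$, the function $D^\beta u$ solves the same homogeneous equation $\cP_0(D^\beta u) = 0$ in a slightly smaller cylinder. Indeed, since $a_{ij}(t)$ has no spatial dependence, differentiating in $x$ produces no commutator terms with the coefficients. Under the $W_2^{1,\infty}(Q_4)$ regularity hypothesis this differentiation can be rigorously justified via difference quotients in the $x$-direction. Next I would establish a standard Caccioppoli-type inequality: for $0 < r < R \le 4$ and a smooth cutoff $\zeta(t,x)$ supported in $Q_R$, equal to $1$ on $Q_r$, testing the weak form of $\cP_0 v = 0$ against $\zeta^2 v$ and using ellipticity together with the bound $|a_{ij}| \le \delta^{-1}$ yields
\begin{equation*}
\|Dv\|_{L_2(Q_r)} \le \frac{N(d,\delta)}{R-r}\,\|v\|_{L_2(Q_R)}.
\end{equation*}
Applying this iteratively with $v = D^\beta u$, starting from $Q_4$ and ending at $Q_{3/2}$ with shrinking radii, yields
\begin{equation*}
\|D^\beta u\|_{L_2(Q_{3/2})} \le N(d,\delta,\beta)\,\|u\|_{L_2(Q_4)}
\end{equation*}
for every multi-index $\beta$.

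To upgrade these $L_2$-bounds to $L_\infty$-bounds, parabolic Sobolev embedding requires control of time derivatives as well. Here the equation itself does the work: since $a_{ij}=a_{ij}(t)$, one has pointwise (a.e.\ in $t$)
\begin{equation*}
\partial_t D^\beta u = a_{ij}(t)\, D_{ij} D^\beta u,
\end{equation*}
so the $L_2$-control of all spatial derivatives immediately transfers to $L_2$-control of $\partial_t D^\beta u$ on $Q_{3/2}$. Combining these bounds with a parabolic Sobolev embedding of the form $W_2^{1,m}(Q_{5/4}) \hookrightarrow L_\infty(Q_1)$ for $m$ sufficiently large (depending on $d$ and $\alpha$), we obtain $\|D^\alpha u\|_{L_\infty(Q_1)} \le N\|u\|_{L_2(Q_4)}$. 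The bound on $D^\alpha u_t$ then follows directly from $u_t = a_{ij}(t)D_{ij}u$, since this yields $\|D^\alpha u_t\|_{L_\infty(Q_1)} \le \delta^{-1}\|D^{\alpha+2} u\|_{L_\infty(Q_1)}$, which has already been estimated.

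The main obstacle is not a single hard analytical step but rather the careful bookkeeping in the Caccioppoli iteration: one must justify that the weak-derivative identities $\cP_0(D^\beta u)=0$ and $\partial_t D^\beta u = a_{ij}D_{ij}D^\beta u$ hold on a sufficiently large shrinking family of cylinders, and track the $\beta$-dependence of the constants across arbitrarily many iterations. The $W_2^{1,\infty}$ hypothesis on $u$ is exactly what permits the difference-quotient argument in $x$ at each stage, since there are no time-regularity obstructions once one only differentiates in the spatial variables.
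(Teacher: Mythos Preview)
Your proposal is correct and follows essentially the same route as the paper's proof. The paper simply cites Lemma~2 of \cite{MR2771670} for the bound $\|D^\alpha u\|_{L_\infty(Q_1)}\le N\|u\|_{L_2(Q_2)}$ (whose proof is precisely the Caccioppoli iteration plus Sobolev embedding you outline, exploiting that $a_{ij}=a_{ij}(t)$ so spatial derivatives commute with $\cP_0$), and then obtains the $D^\alpha u_t$ bound from the identity $u_t=a_{ij}(t)D_{ij}u$ exactly as you do.
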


\begin{proof}
The proof is the same as in the proof of Lemma 2 in \cite{MR2771670}. Indeed, the authors have shown that for any multi-index $\alpha$,
\[
\|D^\alpha u\|_{L_\infty(Q_1)}\le N(d,\alpha) \|u\|_{L_2(Q_2)}.
\]
The estimate for the supremum of $D^\alpha u_t$  follows from the equation $u_t=D_i(a_{ij}(t)D_j u)=a_{ij}(t)D_{ij}u$.  
\end{proof}

\begin{lemma}
    \label{holder estimate of u}
Let  $u\in W_2^{1,\infty}(Q_4)$ satisfy 
    \[
   \cP_0 u=0 \quad \text{in} \,\, Q_4.
    \]
    Then, we have 
    \[
    [u]_{C^1(Q_1)}\le N\|u\|_{L_2(Q_4)},
    \]
    where $N=N(d,\delta)$.
 \end{lemma}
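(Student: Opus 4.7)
The strategy is to reduce the claim to the pointwise derivative bounds already supplied by Lemma \ref{ARMA original Lemma 2}, and then to exploit the fact that inside the bounded cylinder $Q_1$ the time difference is automatically comparable to its own square root.

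First I would apply Lemma \ref{ARMA original Lemma 2} with the multi-indices $\alpha = e_i$ and $\alpha = 0$ to obtain
\[
\|Du\|_{L_\infty(Q_1)} + \|u_t\|_{L_\infty(Q_1)} \le N \|u\|_{L_2(Q_4)},
\]
where $N = N(d,\delta)$. This immediately shows that for fixed $t$, the map $x \mapsto u(t,x)$ is Lipschitz in $B_1$ with constant controlled by $\|u\|_{L_2(Q_4)}$, and for fixed $x$, the map $t \mapsto u(t,x)$ is Lipschitz on $(-1,1)$ with the same type of constant.

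Next, for any two points $(t,x), (s,y) \in Q_1$, I would split via the intermediate point $(t,y)$ and use the two $L_\infty$ bounds:
\[
|u(t,x)-u(s,y)| \le \|Du\|_{L_\infty(Q_1)}|x-y| + \|u_t\|_{L_\infty(Q_1)}|t-s|.
\]
Since $(t,x), (s,y) \in Q_1 = (-1,1)\times B_1$, one has $|t-s|<2$ and consequently $|t-s| \le \sqrt{2}\,|t-s|^{1/2}$. Therefore
\[
|u(t,x)-u(s,y)| \le N \|u\|_{L_2(Q_4)} \bigl(|x-y| + |t-s|^{1/2}\bigr),
\]
which is precisely the claimed $[u]_{C^1(Q_1)}$ estimate.

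There is no genuine obstacle here: once Lemma \ref{ARMA original Lemma 2} is invoked, the argument is purely a combination of the mean value theorem in each variable with the trivial observation that $|t-s|$ dominates $|t-s|^{1/2}$ only up to a harmless constant on a bounded time interval. The only minor bookkeeping point is to remember that the $C^1$ seminorm as defined in the paper uses the parabolic distance $|t-s|^{1/2} + |x-y|$, so the Lipschitz-in-time estimate must be converted to a Hölder-$1/2$ estimate using boundedness of the cylinder.
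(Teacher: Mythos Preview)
Your proof is correct and follows essentially the same approach as the paper: invoke Lemma \ref{ARMA original Lemma 2} to bound $\|Du\|_{L_\infty(Q_1)}$ and $\|u_t\|_{L_\infty(Q_1)}$, split via the intermediate point $(t,y)$, and use the boundedness of the time interval to convert $|t-s|$ into $|t-s|^{1/2}$.
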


 \begin{proof}
 Let $(t,x),(s,y)\in Q_1$. 
 By using the triangle inequality, the fundamental theorem of calculus, and Lemma \ref{ARMA original Lemma 2}, we have
 \[
 |u(t,x)-u(s,y)|\le |u(t,x)-u(t,y)|+|u(t,y)-u(s,y)|
 \]
 \[
 \le N \|Du\|_{L_{\infty}(Q_1)}|x-y|+N\|u_t\|_{L_\infty(Q_1)}|t-s|
 \]
 \[
  \le N \|Du\|_{L_{\infty}(Q_1)}|x-y|+N\|u_t\|_{L_\infty(Q_1)}\sqrt{|t-s|}
  \]
 \[
 \le N(|x-y|+\sqrt{|t-s|})\|u\|_{L_2(Q_4)}.
 \]
Dividing the first and last terms in the above inequalities by $(|x-y| + \sqrt{|t-s|})$ yields the desired inequality. 
 \end{proof}

\begin{lemma}
\label{interior holder lemma}
     There exists $N=N(d,\delta)$ such that, for any $\lambda\ge 0$,
\[
\left[Du\right]_{C^1(Q_1)} + \sqrt{\lambda}[u]_{C^1(Q_1)} \leq N \left(\|Du\|_{L_2(Q_4)} + \sqrt{\lambda}\|u\|_{L_2(Q_4)}\right),
\]
provided that $u\in H^{1/2,1}_2( \bR\times B_4)$ and 
    \begin{equation*}
    \cP_\lambda u=0 \quad \text{in} \,\, \bR\times B_4.
    \end{equation*}
\end{lemma}

\begin{proof}
     We only consider the case $\lambda=0$.
For the case $\lambda>0$, we follow S. Agmon's approach as in the proof of \cite[Lemma 3]{MR2771670}.
     We may assume that $u\in W_2^{1,\infty}(\bR\times B_4)$. 
     By observing that $Du$ is also a solution to the equation $u_t-D_i(a_{ij}(t)D_j u)=0$ and using Lemma \ref{holder estimate of u}, we finish the proof.
\end{proof}

We are now prepared to derive mean oscillation estimates.

\begin{lemma}
    \label{mean oscillation cal U}
    Let $r\in(0,\infty)$, $\kappa \in [4,\infty)$, $\lambda\ge0$, and $X_0=(t_0,x_0)\in \bR^{d+1}$. Assume that $u\in H^{1/2,1}_2(\bR^{d+1})$ satisfies 
    \[
    \cP_\lambda u=D_t^{1/2}h+D_ig_i+f \quad \text{in} \,\, \bR\times B_{\kappa r}(x_0), 
    \]
    where $h, g_i, f \in L_2(\bR^{d+1})$  with $f \equiv 0$ if $\lambda = 0$.
Then, we have
\[
\left(|Du - (Du)_{Q_r(X_0)}| \right)_{Q_r(X_0)} + \left(|\sqrt{\lambda}u - (\sqrt{\lambda}u)_{Q_r(X_0)}| \right)_{Q_r(X_0)}
\]
\[
\le N\kappa^{1+d/2}\sum_{j=0}^\infty2^{-j/4}(|F|^2)^{1/2}_{Q_{2^{j/2}\kappa r,\kappa r}(X_0)}+N\kappa^{-1} (|\cU|^2)^{1/2}_{Q_{\kappa r}(X_0)},
\]
where $N=N(d,\delta)$, 
\begin{equation}
							\label{eq1010_01}
F = (h, g_i, \lambda^{-1/2}f), \quad \cU = (Du, \sqrt{\lambda}u).
\end{equation}
\end{lemma}

\begin{proof}
    By dilation and translation, we may assume that $r=4/\kappa$ and $X_0=0$.  By taking $\lambda \searrow 0$, we only consider the case $\lambda>0$.  
    By utilizing Theorem \ref{Interior L_2 solvability}, we find a unique solution $w\in \mathring H_2^{1/2,1}(\bR\times B_{\kappa r})$ satisfying 
    \[
     \cP_\lambda w=D_t^{1/2} h+D_ig_i+f \quad \text{in} \,\, \bR\times B_{\kappa r}.
    \]
    Set $v=u-w$, which belongs to $H_2^{1/2,1}(\bR \times B_{\kappa r})$ and satisfies
\begin{equation}
							\label{eq0521_03}
\cP_\lambda v = 0 \quad \text{in} \,\, \bR \times B_{\kappa r}.
\end{equation}
 Denote
    \[
    \cW=(D_1w,\dots D_d w,\sqrt{\lambda}w), \quad \cV=(D_1v,\dots D_d v,\sqrt{\lambda}v). 
    \]
    From H\"older's inequality and Lemma \ref{local L_2 estimate lemma}, it follows that 
    \[
    (|\cW-(\cW)_{Q_r}|)_{Q_r}\le N(|\cW|^2)^{1/2}_{Q_r}\le N\kappa^{1+d/2}(|\cW|^2)^{1/2}_{Q_{\kappa r}}\]
       \begin{equation}
        \label{estimate w whole}
    \le N\kappa^{1+d/2}\sum_{j=0}^\infty 2^{-j/4}(|F|^2)^{1/2}_{Q_{2^{j/2}\kappa r,\kappa r}}.
         \end{equation}
    By applying Lemma \ref{interior holder lemma} to \eqref{eq0521_03} with $r=4/\kappa$, we have
    \[
    (|\cV-(\cV)_{Q_r}|)_{Q_r}\le N r [\cV]_{C^1(Q_1)}\le N\kappa^{-1} (|\cV|^2)^{1/2}_{Q_4}=N\kappa^{-1} (|\cV|^2)^{1/2}_{Q_{\kappa r}}.
    \]
    By using $v=u-w$ for the last term of the above estimate, we see that 
    \[
    N\kappa^{-1} (|\cV|^2)^{1/2}_{Q_{\kappa r}}\le N\kappa^{-1} (|\cW|^2)^{1/2}_{Q_{\kappa r}}+N\kappa^{-1} (|\cU|^2)^{1/2}_{Q_{\kappa r}}
    \]
    \[
    \le  N\kappa^{-1}\sum_{j=0}^\infty2^{-j/4}(|F|^2)^{1/2}_{Q_{2^{j/2}\kappa r,\kappa r}}+N\kappa^{-1} (|\cU|^2)^{1/2}_{Q_{\kappa r}}.
    \]
    Hence,
    \begin{equation}
        \label{estiamte v whole}
        (|\cV-(\cV)_{Q_r}|)_{Q_r}\le  N\kappa^{-1}\sum_{j=0}^\infty2^{-j/4}(|F|^2)^{1/2}_{Q_{2^{j/2}\kappa r,\kappa r}}+N\kappa^{-1} (|\cU|^2)^{1/2}_{Q_{\kappa r}}.
     \end{equation}
Combining the estimates \eqref{estimate w whole} and \eqref{estiamte v whole}, and using the triangle inequality, we reach that 
\[
(|\cU-(\cU)_{Q_r}|)_{Q_r}\le (|\cW-(\cW)_{Q_r}|)_{Q_r}+(|\cV-(\cV)_{Q_r}|)_{Q_r}
\]
\[
\le N\kappa^{1+d/2}\sum_{j=0}^\infty 2^{-j/4}(|F|^2)^{1/2}_{Q_{2^{j/2}\kappa r,\kappa r}}
  +N\kappa^{-1} (|\cU|^2)^{1/2}_{Q_{\kappa r}}.
    \]
The lemma is proved.
\end{proof}

Here, we consider the case 
\[
\cP_\lambda u= u_t-\Delta u+\lambda u.
\]
In this case, we estimate the mean oscillations of $D_t^{1/2}u$, $Du$, and $\sqrt{\lambda}u$.
 In other words, when the coefficients of the operator are simple, we estimate the mean oscillations including $D_t^{1/2} u$.
We start by establishing Lipshitz estimates including $D_t^{1/2} u$ term.
 
\begin{lemma}
    \label{interior holder simple}
     There exists $N=N(d)$ such that, for any $\lambda\ge 0$,
\[
[D_t^{1/2}u]_{C^1(Q_1)} + \left[Du\right]_{C^1(Q_1)} + \sqrt{\lambda}[u]_{C^1(Q_1)} \leq N \||D_t^{1/2}u| + |Du|+\sqrt{\lambda}|u|\|_{L_2(Q_4)}
\]
provided that $u\in H^{1/2,1}_2( \bR\times B_4)$, and 
    \begin{equation*}
    u_t-\Delta u+\lambda u=0 \quad \text{in} \,\, \bR\times B_4.
    \end{equation*}
\end{lemma}

\begin{proof}
By the well-known $L_p$ theory for the heat equation, it is clear that $u \in C^\infty(\cQ)$ and its partial derivatives belong to $L_2(\cQ)$, where $\cQ = \bR \times B_r$, $r \in (0,4)$.
Thus, Lemma \ref{lem0515_1} implies $D_t^{1/2}u \in H_2^{1/2,1}(\cQ)$ because $u, u_t \in L_2(\cQ)$ and $Du, \partial_t Du \in L_2(\cQ)$ along with the identity \eqref{eq0515_02}.
By (formally) taking the half-time derivative of both sides of the heat equation, one can verify that $D_t^{1/2}u$ is also a solution to
\[
u_t - \Delta u + \lambda u = 0
\]
in $\cQ = \bR \times B_r$, $r \in (0,4)$.
Then, following the proof of Lemma \ref{interior holder lemma}, we conclude the proof.
\end{proof}

We are now set to obtain mean oscillation estimates for $D_t^{1/2}u$, $Du$, and $\sqrt{\lambda}u$, assuming that $D_i(a_{ij}D_j u)=\Delta u$. The proof is identical to that of Lemma \ref{mean oscillation cal U}, using Lemma
\ref{interior holder simple} instead of Lemma \ref{interior holder lemma}. Thus, we omit the proof.

\begin{lemma}
    \label{mean oscillation U simple}
    Let $r\in(0,\infty)$, $\kappa \in [4,\infty)$, $\lambda\ge0$, and $X_0=(t_0,x_0)\in \bR^{d+1}$. Assume that $u\in H^{1/2,1}_2(\bR^{d+1})$ satisfies 
    \[
     u_t-\Delta u+\lambda u=D_t^{1/2}h+D_ig_i+f \quad \text{in} \,\, \bR\times B_{\kappa r}(x_0), 
    \]
    where $h, g_i, f \in L_2(\bR^{d+1})$ with $f \equiv 0$ if $\lambda = 0$.
    Then, we have 
\[
\left(|D_t^{1/2}u - (D_t^{1/2}u)_{Q_r(X_0)}| \right)_{Q_r(X_0)} + 
\left(|Du - (Du)_{Q_r(X_0)}| \right)_{Q_r(X_0)}
\]
\[
+ \left(|\sqrt{\lambda}u - (\sqrt{\lambda}u)_{Q_r(X_0)}| \right)_{Q_r(X_0)}
\]
\[
\le N\kappa^{1+d/2}\sum_{j=0}^\infty2^{-j/4}(|F|^2)^{1/2}_{Q_{2^{j/2}\kappa r,\kappa r}(X_0)}+N\kappa^{-1} (|U|^2)^{1/2}_{Q_{\kappa r}(X_0)},
\]
where $N=N(d)$, $F$ is as in \eqref{eq1010_01}, and
\begin{equation}
							\label{eq1010_02}
U = ( D_t^{1/2}u, Du, \sqrt{\lambda}u).
\end{equation}
\end{lemma}

\section{Proof of Theorem \ref{main whole space}}
							\label{sec06}

In this section, we prove Theorem \ref{main whole space}.
We begin by establishing mean oscillation estimates for the operator
\[\cP_\lambda u=u_t-D_i(a_{ij}D_j u)+\lambda u,\]
where the coefficients satisfy Assumption \ref{assum coeffi}.
We then employ the method of freezing coefficients.
Note that the constant $R_0$, which appears below, is the one from Assumption \ref{assum coeffi}.

\begin{lemma}\label{mean oscillation general coeffi}
Let $\nu\in (2,\infty)$, $\nu'=2\nu/(\nu-2)$, $r\in(0,R_0/\kappa]$, $\kappa \in [4,\infty)$, $\lambda\ge0$, and $X_0=(t_0,x_0)\in \bR^{d+1}$.
Assume that $u \in H^{1/2,1}_\nu(\bR^{d+1}) \cap H^{1/2,1}_2(\bR^{d+1})$ satisfies 
    \[
    \cP_\lambda u=D_t^{1/2}h+D_ig_i+f \quad \text{in} \,\, \bR^{d+1}, 
    \]
    where $h, g_i, f \in L_{\nu}(\bR^{d+1}) \cap L_2(\bR^{d+1})$ with $f \equiv 0$ if $\lambda = 0$.
    Then,
    under Assumption \ref{assum coeffi} ($\gamma$), we have
\[
\left(|Du - (Du)_{Q_r(X_0)}| \right)_{Q_r(X_0)} + \left(|\sqrt{\lambda}u - (\sqrt{\lambda}u)_{Q_r(X_0)}| \right)_{Q_r(X_0)}
\]
\[
\le N\kappa^{1+d/2}\sum_{j=0}^\infty2^{-j/4}(|F|^2)^{1/2}_{Q_{2^{j/2}\kappa r,\kappa r}(X_0)}
\]
\[
+N\kappa^{-1} (|\cU|^2)^{1/2}_{Q_{\kappa r}(X_0)}+N\gamma^{1/\nu'}\kappa^{1+d/2}\sum_{j=0}^\infty2^{-j/4}(|Du|^\nu)_{Q_{2^{j/2}R,R}(X_0)}^{1/\nu},
\]
where $\cU$ and $F$ are as in \eqref{eq1010_01}, and $N=N(d,\delta,\nu)$.
\end{lemma}

\begin{proof}
    Denote $R=\kappa r(\le R_0)$. We may assume that $X_0=0$.
    Let 
    \[
    \overline{\cP_\lambda}u=u_t-D_i(\bar{a}_{ij}(t)D_j u)+\lambda u,
    \]
    where 
    \[
   \bar{a}_{ij}(t):=\dashint_{B_{R}}a_{ij}(t,y)\,dy.
    \]
    Then, we observe that 
    \begin{equation}
        \label{bar eq}
    \overline{\cP_\lambda}u=D_t^{1/2}h+D_i\tilde{g}_i+h \quad \text{in} \,\, \bR^{d+1}, 
        \end{equation}
    where 
    \[
    \tilde{g}_i=g_i+(a_{ij}-\bar{a}_{ij})D_j u.
    \]
By applying Lemma \ref{mean oscillation cal U} to $u$ with the equation \eqref{bar eq}, we have 
\[
\left(|Du - (Du)_{Q_r}| \right)_{Q_r} + \left(|\sqrt{\lambda}u - (\sqrt{\lambda}u)_{Q_r}| \right)_{Q_r}
\]
\[
\le N\kappa^{1+d/2}\sum_{j=0}^\infty2^{-j/4}(|F|^2)^{1/2}_{Q_{2^{j/2}R,R}}+N\kappa^{-1} (|\cU|^2)^{1/2}_{Q_R}
    \]
    \[
    +N\kappa^{1+d/2}\sum_{l=0}^\infty2^{-l/4}(|a_{ij}-\bar{a}_{ij}|^2|Du|^2)^{1/2}_{Q_{2^{l/2}R,R}}.
    \]
Using H\"older's inequality, $|a_{ij}|\le \delta^{-1}$, and Assumption \ref{assum coeffi} ($\gamma$), we see that 
\[
(|a_{ij}-\bar{a}_{ij}|^2|Du|^2)_{Q_{2^{l/2}R,R}}^{1/2}
\le (|a_{ij}-\bar{a}_{ij}|^{\nu'})_{Q_{2^{l/2}R,R}}^{1/\nu'}
(|Du|^\nu)_{Q_{2^{l/2}R,R}}^{1/\nu}
\]
\[
\le  N(|a_{ij}-\bar{a}_{ij}|)_{Q_{2^{l/2}R,R}}^{1/\nu'}
(|Du|^\nu)_{Q_{2^{l/2}R,R}}^{1/\nu} \leq N \gamma^{1/\nu'}
(|Du|^\nu)_{Q_{2^{l/2}R,R}}^{1/\nu},
\]
where for the last inequality, see Remark \ref{rem0522_01}.
From this, we obtain the inequality in the lemma with $X_0 = 0$. We finish the proof.
\end{proof}

For a function $f$ defined on $\bR^{d+1}$, we denote its (parabolic) maximal and strong maximal functions, respectively, by 
\[
\cM f(t,x)=\sup_{Q_r(\tau,z)\ni (t,x)} \dashint_{Q_r(\tau,z)}|f(s,y)|\,dy\,ds,
\]
and 
\[
\cS\cM f(t,x)=\sup_{Q_{r,s}(\tau,z)\ni (t,x)}\dashint_{Q_{r,s}(\tau,z)}|f(s,y)|\,dy\,ds.
\]

Next, we define the following filtration of partitions in $\bR^{d+1}$:
\[
\mathbb{C}_n:=\{Q^n=Q^n_{(i_0,i_1,\dots,i_d)}:(i_0,i_1,\dots,i_d)\in \mathbb{Z}^{d+1}\},
\]
where $n\in \mathbb{Z}$, and 
\[
Q^n_{(i_0,i_1,\dots,i_d)}
\]
\[
=[2i_02^{-2n},2(i_0+1)2^{-2n})\times[i_12^{-n},(i_1+1)2^{-n})\times \cdots \times [i_d2^{-n},(i_d+1)2^{-n}).
\]

\begin{remark}
    \label{comparision cube}    
We note that for $X\in \bR^{d+1}$ and $X\in Q^n\in \mathbb{C}_n$, one can find $X_0\in \bR^{d+1}$ and the smallest $r>0$ (compatible with $2^{-n}$) such that $Q^n\subset Q_r(X_0)$ and 
\begin{equation*}
\dashint_{Q^n}|f-(f)_{Q^n}|\,dY\le N \dashint_{Q_r(X_0)}|f-(f)_{Q_r(X_0)}|\,dY,    
\end{equation*}
where $N=N(d)$.
\end{remark}

We denote the dyadic sharp function of $f$ by 
\[
f^{\#}_{dy}(t,x)=\sup_{n<\infty}\dashint_{Q^n\ni (t,x)}|g(s,y)-g_{|n}(t,x)|\,dy\,ds,
\]
where 
\[
g_{|n}(t,x)=\dashint_{Q^n}g(s,y)\,dy\,ds, \quad (t,x)\in Q^n.
\]

\begin{lemma}
    \label{small support}
    Let $\nu\in (2,\infty)$, $\nu'=2\nu/(\nu-2)$,  $\kappa \in [4,\infty)$, $\rho\in (0,1)$, $\lambda\ge0$, and $X\in \bR^{d+1}$. Assume that $u \in H^{1/2,1}_\nu(\bR^{d+1}) \cap H^{1/2,1}_2(\bR^{d+1})$ vanishes outside $\bR\times B_{\rho R_0}$ and satisfies 
    \[
    \cP_\lambda u=D_t^{1/2}h+D_ig_i+f \quad \text{in} \,\, \bR^{d+1}, 
    \]
    where $h, g_i, f \in L_{\nu}(\bR^{d+1}) \cap L_2(\bR^{d+1})$ with $f\equiv 0$ if $\lambda = 0$.
    Then,
    under Assumption \ref{assum coeffi} ($\gamma$), we have
\[
(Du)^{\#}_{dy}(X) + (\sqrt{\lambda} u)^{\#}_{dy}(X) \le N\kappa^{1+d/2}(\cS\cM (|F|^2)(X))^{1/2}\]
\[
+N(\kappa^{-1}+\rho^{d/2}\kappa^{d/2})(\cM(|\cU|^2)(X))^{1/2} +N\gamma^{1/\nu'}\kappa^{1+d/2}(\cS\cM(|\cU|^\nu)(X))^{1/\nu},
\]
where $\cU$ and $F$ are as in \eqref{eq1010_01}, and $N=N(d,\delta,\nu)$.
\end{lemma}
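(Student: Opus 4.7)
The plan is to fix $X \in \bR^{d+1}$, control the mean oscillation of $\cU$ on an arbitrary dyadic cube $Q^n$ containing $X$, and then take the supremum in $n$ to bound $\cU^{\#}_{dy}(X)$. By Remark \ref{comparision cube}, it suffices to estimate $\dashint_{Q_r(X_0)}|\cU-(\cU)_{Q_r(X_0)}|\,dY$ for a parabolic cylinder $Q_r(X_0) \supset Q^n$ whose radius $r$ is comparable to $2^{-n}$. I would then split the argument according to whether $r \le R_0/\kappa$ or $r > R_0/\kappa$.

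In the case $r \le R_0/\kappa$, Lemma \ref{mean oscillation general coeffi} applies directly. Every rectangular cylinder $Q_{2^{j/2}\kappa r,\kappa r}(X_0)$ and the cube $Q_{\kappa r}(X_0)$ appearing on its right-hand side contains $X$, so their averages are dominated by the corresponding (strong) maximal function at $X$, namely $(\cS\cM(|F|^2)(X))^{1/2}$, $(\cS\cM(|Du|^\nu)(X))^{1/\nu}$, and $(\cM(|\cU|^2)(X))^{1/2}$. The geometric series $\sum 2^{-j/4}$ sums to a finite constant, and the trivial pointwise bound $|Du| \le |\cU|$ upgrades the $\cS\cM(|Du|^\nu)$ term into $\cS\cM(|\cU|^\nu)$. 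This yields the three advertised terms, with the $\kappa^{-1}$ factor carrying the $\cM(|\cU|^2)$ contribution.

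In the case $r > R_0/\kappa$, Lemma \ref{mean oscillation general coeffi} is unavailable, and this is where the support hypothesis must take over. Since $u$ vanishes outside $\bR \times B_{\rho R_0}$, so do $Du$ and $\sqrt{\lambda}u$, and hence $\cU$. The triangle inequality combined with H\"older's inequality gives
\begin{equation*}
\dashint_{Q_r(X_0)}|\cU-(\cU)_{Q_r(X_0)}|\,dY \le 2\left(\frac{|B_r(x_0)\cap B_{\rho R_0}|}{|B_r|}\right)^{1/2}\bigl(|\cU|^2\bigr)^{1/2}_{Q_r(X_0)}.
\end{equation*}
The volume ratio is bounded by $(\rho R_0/r)^d < (\rho\kappa)^d$ because $r > R_0/\kappa$, while $\bigl(|\cU|^2\bigr)^{1/2}_{Q_r(X_0)} \le (\cM(|\cU|^2)(X))^{1/2}$. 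This produces the $\rho^{d/2}\kappa^{d/2}$ contribution to the second term.

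Combining both cases and taking the supremum over all dyadic cubes $Q^n \ni X$ yields the desired pointwise estimate. The delicate point lies in Case 2: although $(\rho\kappa)^d$ may be larger than $1$ for some parameter ranges, it is precisely this small-$\rho$ gain that, later in the proof of Theorem \ref{main whole space}, will permit absorbing the $\cM(|\cU|^2)$ term into the left-hand side by first choosing $\kappa$ large enough to render $\kappa^{-1}$ small and then $\rho$ small enough to render $\rho^{d/2}\kappa^{d/2}$ small.
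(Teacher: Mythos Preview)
Your proposal is correct and follows essentially the same approach as the paper: the same split into the cases $r\le R_0/\kappa$ and $r>R_0/\kappa$, with Lemma \ref{mean oscillation general coeffi} and maximal-function domination in the first case and the H\"older/support argument yielding the $\rho^{d/2}\kappa^{d/2}$ factor in the second. The paper's computations match yours line by line, including the volume-ratio bound $(\rho R_0/r)^d\le(\rho\kappa)^d$ and the passage from $|Du|$ to $|\cU|$ in the last term.
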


\begin{proof}
Let $X\in Q^n$.
Set $\Phi$ to be either $Du$ or $\sqrt{\lambda}u$.
By Remark \ref{comparision cube}, we choose $X_0\in \bR^{d+1}$ and the smallest $r>0$ such that 
     $Q^n\subset Q_r(X_0)$ and 
\begin{equation}
\label{small support eq1}
\dashint_{Q^n}|\Phi-(\Phi)_{Q^n}|\,dY
\le
N \dashint_{Q_r(X_0)}|\Phi-(\Phi)_{Q_r(X_0)}|\,dY.    
\end{equation}

First, we consider the case $r>R_0/\kappa$. 
By direct calculations, we see that 
\[
     \dashint_{Q_r(X_0)}|\Phi-( \Phi)_{Q_r(X_0)}|\,dY
     \le
     2\dashint_{Q_r(X_0)}|\cU|\,dY
     =
     2\dashint_{Q_r(X_0)}I_{B_{\rho R_0}}|\cU|\,dY
     \]
     \[
     \le 2\left(\dashint_{Q_r(X_0)}I_{B_{\rho R_0}}\,dY\right)^{1/2}\left(\dashint_{Q_r(X_0)}|\cU|^2\,dY\right)^{1/2}      
     \]
\begin{equation}
    \label{small support eq2}
\le N\rho^{d/2}(R_0/r)^{d/2}(\cM(|\cU|^2)(X))^{1/2}
\le N\rho^{d/2}\kappa^{d/2}(\cM(|\cU|^2)(X))^{1/2}.
     \end{equation}

Next, we consider the case $r\in(0, R_0/\kappa]$.
By Lemma \ref{mean oscillation general coeffi}, we have
  \[
    \dashint_{Q_r(X_0)}|\Phi-(\Phi)_{Q_r(X_0)}|\,dY
    \le N\kappa^{1+d/2}\sum_{j=0}^\infty2^{-j/4}(|F|^2)^{1/2}_{Q_{2^{j/2}\kappa r,\kappa r}(X_0)}
       \]
    \[ +N\kappa^{-1} (|\cU|^2)^{1/2}_{Q_{\kappa r}(X_0)}
      +N\gamma^{1/\nu'}\kappa^{1+d/2}\sum_{j=0}^\infty2^{-j/4}(|Du|^\nu)_{Q_{2^{j/2}R,R}(X_0)}^{1/\nu}
    \]
   \[\le
    N\kappa^{1+d/2}(\cS\cM (|F|^2)(X))^{1/2}+N\kappa^{-1} (\cM(|\cU|^2)(X))^{1/2}
    \]
    \begin{equation}
        \label{small support eq3}
          +N\gamma^{1/\nu'}\kappa^{1+d/2}(\cS\cM(|\cU|^\nu)(X))^{1/\nu}.
\end{equation}
Finally, by combining the estimates \eqref{small support eq1}, \eqref{small support eq2}, and \eqref{small support eq3}, and taking the supremum with respect to $Q^n\ni X$, we finish the proof.    
\end{proof}

\begin{lemma}
    \label{Lp small support}
    Let $p\in (2,\infty)$, $\lambda \geq 0$, $h, g_i, f \in L_p(\bR^{d+1}) \cap L_2(\bR^{d+1})$ with $f \equiv 0$ if $\lambda = 0$.
There exist positive constants $\gamma$, $\rho$, and $N$ depending only on $d$, $\delta$, and $p$ such that, under Assumption \ref{assum coeffi} ($\gamma$), for any $u \in H^{1/2,1}_p(\bR^{d+1}) \cap H^{1/2,1}_2(\bR^{d+1})$ vanishing outside $\bR\times B_{\rho R_0}$ and satisfying 
    \[
    \cP_\lambda u=D_t^{1/2}h+D_ig_i+f \quad \text{in} \,\, \bR^{d+1}, 
    \]
we have
\[
\|Du\|_p + \sqrt{\lambda}\|u\|_p \leq N \left( \|h\|_p + \|g_i\|_p + \lambda^{-1/2}\|f\|_p\right),
\]
where $\|\cdot\|_p = \|\cdot\|_{L_p(\bR^{d+1})}$.
\end{lemma}

    \begin{proof}
        Let $0<\gamma,\rho<1$ and $\kappa\ge4$ be constants to be specified later. Take an exponent $\nu$ such that $p>\nu>2$.
Note that by interpolation, $h,g_i,f \in L_\nu(\bR^{d+1})$ and $u \in H_\nu^{1/2,1}(\bR^{d+1})$.
Set $\Phi$ to be either $Du$ or $\sqrt{\lambda}u$, and let $F$ and $\cU$ be as in \eqref{eq1010_01}.
By using Lemma \ref{small support}, the Hardy-Littlewood maximal function theorem, and the Fefferman-Stein sharp function theorem (see, for example, \cite[Chapter 4]{MR2435520}), we obtain that 
        \[
        \|\Phi\|_{L_p(\bR^{d+1})}
        \le N\|\Phi^{\#}_{dy}\|_{L_p(\bR^{d+1})}
        \le N\kappa^{1+d/2}\|\cS\cM (|F|^2)^{1/2}\|_{L_p(\bR^{d+1})}
        \]
        \[+N(\kappa^{-1}+\rho^{d/2}\kappa^{d/2})\| \cM(|\cU|^2)^{1/2}\|_{L_p(\bR^{d+1})}        +N\gamma^{1/\nu'}\kappa^{1+d/2}\|\cS\cM(|\cU|^\nu)^{1/\nu}\|_{L_p(\bR^{d+1})}
        \]
    \[\le 
    N_0\kappa^{1+d/2}\|F\|_{L_p(\bR^{d+1})}        +N_0(\kappa^{-1}+\rho^{d/2}\kappa^{d/2}+\gamma^{1/\nu'}\kappa^{1+d/2})\| \cU\|_{L_p(\bR^{d+1})},
        \]
        where $N_0=N_0(d,\delta,p)$.
        Take $\kappa=4N_0$ and then choose sufficiently small $\gamma$ and $\rho$, so that 
        \[
        N_0\gamma^{1/\nu'}\kappa^{1+d/2}\le 1/4,\quad N_0\rho^{d/2}\kappa^{d/2}\le 1/4.
        \]
        This concludes the proof.
    \end{proof}

By employing the standard partition of unity argument with respect to the spatial variables, we establish the following theorem.

     \begin{theorem}
          \label{Lp mathcal U}
Let $p\in (2,\infty)$, $\lambda \geq 0$, $h, g_i, f \in L_p(\bR^{d+1}) \cap L_2(\bR^{d+1})$ with $f \equiv 0$ if $\lambda = 0$.
There exist positive constants $\gamma$  and $N$ depending only on $d$, $\delta$, and $p$ such that, under Assumption \ref{assum coeffi} ($\gamma$), for any $u \in H^{1/2,1}_p(\bR^{d+1}) \cap H^{1/2,1}_2(\bR^{d+1})$ satisfying 
    \[
     \cP_\lambda u=D_t^{1/2}h+D_ig_i+f \quad \text{in} \,\, \bR^{d+1}, 
    \]
    we have
\begin{equation}
        \label{est Lp mathcal U}
\|Du\|_p + \sqrt{\lambda}\|u\|_p \leq N \left( \|h\|_p + \|g_i\|_p + \lambda^{-1/2}\|f\|_p\right),
\end{equation}
where $\|\cdot\|_p = \|\cdot\|_{L_p(\bR^{d+1})}$
provided that $\lambda\ge \lambda_0$, where $\lambda_0=\lambda_0(d,\delta,p,R_0)\geq0$.
     \end{theorem}

    \begin{proof}
        For clarity, we use $N_0$ to denote the constants depending on $d$, $\delta$, $p$, and $R_0$.
        Take  $\zeta\in C_0^\infty(\bR^d)$ satisfying 
        \begin{equation}
            \label{eq4 Lp mathcal U}
        \|\zeta\|_{L_p(\bR^d)}=1,
        \end{equation}
 with support in the ball $B_{\rho R_0}$, where $\rho=\rho(d,\delta,p) \in (0,1)$ is the constant taken from Lemma \ref{Lp small support}.
        We observe that 
         \[
         |u(t,x)|^p=\int_{\bR^d}|u(t,x)\zeta(x-y)|^p\,dy
         \]
         and 
        \[
        |Du(t,x)|^p=\int_{\bR^d}|Du(t,x)\zeta(x-y)|^p\,dy
        \]
        \[
        \le \int_{\bR^d}|D(u(t,x)\zeta(x-y))|^p\,dy
        +N\int_{\bR^d}|u(t,x)D\zeta(x-y)|^p\,dy
        \]
        \[
        \le \int_{\bR^d}|D(u(t,x)\zeta(x-y))|^p\,dy
        +N_0|u(t,x)|^p,
         \]
        where we use 
        \begin{equation}
            \label{eq2 Lp mathcal U}
            \int_{\bR^d}|D\zeta(x)|^p\,dx \le N_0 
        \end{equation}
        to get the last inequality.
         Thus,
\begin{multline}
							\label{eq1 Lp mathcal U}
\|Du\|_{L_p(\bR^{d+1})}^p
             +\lambda^{p/2}\|u\|_{L_p(\bR^{d+1})}^p \leq N_0\|u\|_{L_p(\bR^{d+1})}
\\
+ N\int_{\bR^d}\left(\|D(u\zeta(\cdot-y))\|_{L_p(\bR^{d+1})}^p +N\lambda^{p/2}\|u\zeta(\cdot-y)\|_{L_p(\bR^{d+1})}^p\right)\,dy.
\end{multline}
Notice that $u(t,x)\zeta(x-y)$ satisfies
         \[
         \cP_\lambda(u\zeta(\cdot-y))
         =
         -D_i(a_{ij}uD_j\zeta(\cdot-y))
         -a_{ij}D_juD_i\zeta(\cdot-y)
         \]
         \[
         +D_t^{1/2}(h\zeta(\cdot-y))
         +D_i(g_i\zeta(\cdot-y))-g_iD_i\zeta(\cdot-y)
         +f\zeta(\cdot-y)\]  
    in $\bR^{d+1}$.
By applying Lemma \ref{Lp small support}, we have
\begin{multline}
							\label{eq3 Lp mathcal U}
\|D(u\zeta(\cdot-y))\|_{L_p(\bR^{d+1})}^p +\lambda^{p/2}\|u\zeta(\cdot-y)\|_{L_p(\bR^{d+1})}^p
\\
\le N\|uD\zeta(\cdot-y)\|_{L_p(\bR^{d+1})}^p +N\lambda^{-p/2}\|DuD\zeta(\cdot-y)\|_{L_p(\bR^{d+1})}^p
\\
+N\|h\zeta(\cdot-y)\|_{L_p(\bR^{d+1})}^p +N\|g\zeta(\cdot-y)\|_{L_p(\bR^{d+1})}^p
\\
+N\lambda^{-p/2}\|gD\zeta(\cdot-y)\|_{L_p(\bR^{d+1})}^p +N\lambda^{-p/2}\|f\zeta(\cdot-y)\|_{L_p(\bR^{d+1})}^p.
\end{multline}
By combining \eqref{eq1 Lp mathcal U} and \eqref{eq3 Lp mathcal U} and using \eqref{eq2 Lp mathcal U} and \eqref{eq4 Lp mathcal U}, we arrive at 
    \[
     \|Du\|_{L_p(\bR^{d+1})}^p
             +\lambda^{p/2}\|u\|_{L_p(\bR^{d+1})}^p
    \]
    \[
    \le N_0\|u\|_{L_p(\bR^{d+1})}^p
    +N_0\lambda^{-p/2}\|Du\|_{L_p(\bR^{d+1})}^p
    +N_0\lambda^{-p/2}\|g\|_{L_p(\bR^{d+1})}^p
    \]
    \[
    +N\|h\|_{L_p(\bR^{d+1})}^p
    +N\|g\|_{L_p(\bR^{d+1})}^p
    +N\lambda^{-p/2}\|f\|_{L_p(\bR^{d+1})}^p.
    \]
    By selecting a sufficiently large $\lambda_0$ such that 
    \[
    N_0\lambda_0^{-p/2}\le \frac{1}{2},
    \]
    we see that for $\lambda\ge \lambda_0$, the estimate \eqref{est Lp mathcal U} holds true.
    \end{proof}
    
    In the case where $D_i(a_{ij}D_ju)=\Delta u$, we derive $L_p$-estimates for $D_t^{1/2}u$, $Du$, and $\sqrt{\lambda}u$.
Essentially, for the Laplacian case, one can directly estimate not only $Du$ and $u$, but also $D_t^{1/2}u$. It is worth noting that the assumption on the support of $u$ is not necessary in this case, as we do not employ the freezing coefficients technique.

\begin{theorem}
    \label{Lp laplacian}
    Let $p\in (2,\infty)$, $\lambda \geq 0$, and $h, g_i, f \in L_p(\bR^{d+1}) \cap L_2(\bR^{d+1})$ with $f \equiv 0$ if $\lambda = 0$.
There exist positive constants $\gamma$ and $N$ depending only on $d$, $\delta$, and $p$ such that, under Assumption \ref{assum coeffi} ($\gamma$), for any  $u \in H^{1/2,1}_p(\bR^{d+1}) \cap H^{1/2,1}_2(\bR^{d+1})$ satisfying 
    \[
     u_t-\Delta u+\lambda u=D_t^{1/2}h+D_ig_i+f \quad \text{in} \,\, \bR^{d+1}, 
    \]
we have
\begin{equation}
							\label{eq0715_01}
\|D_t^{1/2}u\|_p + \|Du\|_p + \sqrt{\lambda}\|u\|_p \le N \left( \|h\|_p + \|g_i\|_p +\lambda^{-1/2} \|f\|_p \right),
\end{equation}
where $\|\cdot\|_p = \|\cdot\|_{L_p(\bR^{d+1})}$.

\end{theorem}

\begin{proof}
Using Lemma \ref{mean oscillation U simple} with the argument in the proof of Lemma \ref{small support}, we see that 
\[
\Phi^{\#}_{dy}(X) \le N\kappa^{1+d/2}(\cS\cM (|F|^2)(X))^{1/2}+N\kappa^{-1} (\cM(|U|^2)(X))^{1/2}
\]
for all $X\in \bR^{d+1}$, where $F$ and $U$ are defined in \eqref{eq1010_01} and \eqref{eq1010_02}, respectively.
Here, $\Phi$ can be any one of $D_t^{1/2}u$, $Du$, and $\sqrt{\lambda}u$.
We conclude the proof by using the Fefferman-Stein sharp function theorem and the Hardy-Littlewood maximal function theorem, taking $\kappa$ sufficiently large so that the term $\|U\|_{p}$ can be absorbed into the left-hand side of the estimate \eqref{eq0715_01}.
\end{proof}

\begin{remark}
Since, in Theorem \ref{Lp laplacian}, the operator is the heat operator and the domain is $\bR^{d+1}$, to prove \eqref{eq0715_01}, especially to prove
\[
\|D_t^{1/2}u\|_{L_p(\bR^{d+1})} \lesssim \||h|+|g_i| + \lambda^{-1/2}|f|\|_{L_p(\bR^{d+1})},
\]
one may use the well-known multiplier theorem with respect to $(t,x) \in \bR^{d+1}$.
Here, we use mean oscillation estimates, which are also applicable to domains such as $\bR \times \Omega$, where $\Omega$ is not necessarily the whole Euclidean space.
On the other hand, if we have estimates of $Du$ and $\sqrt{\lambda}u$ (not $D_t^{1/2}u$) for equations as in \eqref{main eq} {\em without} the $D_t^{1/2}h$ term on the right-hand side, which are indeed available in the literature (see, for instance, \cite{MR2764911, MR2771670}) under certain conditions on domains and coefficients, Theorem \ref{Lp laplacian}, combined with the estimates for $Du$ and $\sqrt{\lambda}u$, can yield the main results in this paper.
We do not pursue this direction because we aim to demonstrate how to obtain the desired estimates for equations as in \eqref{main eq} {\em with} the $D_t^{1/2}h$ term on the right-hand side, without relying on known results for the usual parabolic equations (i.e. equations as in \eqref{main eq} without the $D_t^{1/2}h$ term).
Depending on the conditions on domains and coefficients, estimates of $Du$ and $\sqrt{\lambda}u$ for equations as in \eqref{main eq} without the $D_t^{1/2}h$ term are not available.
In some cases, our results can be used to derive estimates of $Du$ and $\sqrt{\lambda}u$ for the usual parabolic equations. 
\end{remark}

We are ready to complete the proof of Theorem \ref{main whole space}.  

\begin{proof}[Proof of Theorem \ref{main whole space}]
First, we consider the case $p\in (2,\infty)$.
Temporally, we assume that 
\begin{equation}
							\label{eq0905_01}
u \in H_p^{1/2,1}(\bR^{d+1}) \cap H_2^{1/2,1}(\bR^{d+1}), \quad h, g_i, f \in L_p(\bR^{d+1}) \cap L_2(\bR^{d+1}).
\end{equation}
Note that 
\begin{equation}
							\label{manipulated eqn}
u_t-\Delta u+\lambda u= D_t^{1/2}h+D_i\tilde{g}_i+f \quad \text{in} \,\, \bR^{d+1},
\end{equation}
where $\tilde{g}_i=g_i+(a_{ij}-\delta_{ij})D_ju$ and $\delta_{ij}$ is the Kronecker delta.
By using Theorem \ref{Lp laplacian}, we have 
\[
\|U\|_{L_p(\bR^{d+1})}\le N\|F\|_{L_p(\bR^{d+1})}+N\|Du\|_{L_p(\bR^{d+1})},
\]
where $U$ is as in \eqref{eq1010_02} and $F$ is as in \eqref{eq1010_01}.
Furthermore, from Theorem \ref{Lp mathcal U}, we also have 
\begin{equation*}
\|Du\|_{L_p(\bR^{d+1})}\le N\|F\|_{L_p(\bR^{d+1})}.
\end{equation*}
Combining the above two assumptions, we obtain the estimate \eqref{main whole space estimate} under the additional assumptions in \eqref{eq0905_01}.

To prove \eqref{main whole space estimate} without those in \eqref{eq0905_01}, let $\zeta \in C_0^\infty(\bR^d)$ be such that
\[
\zeta(x) = \left\{
\begin{aligned}
1 \quad &\text{for} \,\, |x| \leq 1,
\\
0 \quad &\text{for} \,\, |x| \geq 2.
\end{aligned}
\right.
\]
Set $\zeta_k(x) = \zeta(x/2^k)$.
Also, take $\eta_k(t)$ from \eqref{eq0513_02}.
We then define
\[
w^k(t,x):=\eta_k(t)\zeta_k(x) u(t,x),
\]
which satisfies $w^k \in H_p^{1/2,1}(\bR^{d+1}) \cap H_2^{1/2,1}(\bR^{d+1})$ and
\begin{equation}
							\label{eq0905_03}
w^k \to u \quad \text{in} \,\, H_p^{1/2,1}(\bR^{d+1}).
\end{equation}
Indeed, it is clear that $w^k, Dw^k \in L_p(\bR^{d+1}) \cap L_2(\bR^{d+1})$.
To check $D_t^{1/2}w^k \in L_p(\bR^{d+1})$ and \eqref{eq0905_03}, see Lemmas \ref{lem0513_1} and \ref{tail estimate}.
To verify $D_t^{1/2}w^k \in L_2(\bR^{d+1})$, observe that
\[
\sum_{j=1}^\infty 2^{-j}\|\zeta_k u\|_{L_2\left((-2^{k+j},2^{k+j}) \times \bR^d\right)} \leq \sum_{j=1}^\infty 2^{-j}\|u\|_{L_2\left((-2^{k+j},2^{k+j}) \times B_{2^{k+1}}\right)}
\]
\[
\leq N(d,p)2^{k(d+1)(1/2-1/p)} \sum_{j=1}^\infty 2^{-j(1/2+1/p)} \|u\|_{L_p(\bR^{d+1})} < \infty.
\]
Hence, if we set
\[
(\zeta_k u)_k := \frac{1}{\sqrt{8\pi}} \int_\bR \zeta_k(x) u(t+\ell,x) \frac{\eta_k(t+\ell) - \eta_k(t)}{|\ell|^{3/2}} \, d\ell,
\]
by Lemma \ref{tail estimate}, we have $(\zeta_k u)_k \in L_2(\bR^{d+1})$.
From this with Lemma \ref{lem0513_1} it follows that
\[
D_t^{1/2} w^k = \eta_k(t) \zeta_k(x) D_t^{1/2}u +  (\zeta_k u)_k \in L_2(\bR^{d+1}).
\]

Denote
\[
h^k = \eta_k \zeta_k h, \quad
g_i^k = \eta_k \zeta_k g_i - a_{ij} u \eta_k D_j \zeta_k,
\]
\[
f^k = \eta_k\zeta_k f - g_i \eta_k D_i \zeta_k - (\zeta_k h)_k + \eta_k' \zeta_k u - a_{ij} \eta_k D_i\zeta_k D_j u,
\]
where
\[
(\zeta_k h)_k = \frac{1}{8\pi} \int_\bR \zeta_k(x) h(t+\ell,x)\frac{\eta_k(t+\ell)-\eta_k(t)}{|\ell|^{3/2}} \, d\ell.
\]
Observe that
\[
h^k, g_i^k, f^k \in L_p(\bR^{d+1}) \cap L_2(\bR^{d+1}),
\]
and
\begin{equation}
							\label{eq0905_02}
h^k \to h, \quad g_i^k \to g_i, \quad f^k \to f \quad \text{in} \,\, L_p(\bR^{d+1})
\end{equation}
as $k \to \infty$.
In particular, by Lemma \ref{tail estimate} and following the argument above used to verify that $(\zeta_ku)_k \in L_2(\bR^{d+1})$, we have $\|(\zeta_k h)_k\|_{L_p(\bR^{d+1})} \to 0$ and $(\zeta_k h)_k \in L_2(\bR^{d+1})$.

Since $u$ satisfies \eqref{main eq}, we see that $w^k$ satisfies 
\[
w^k_t - D_i(a_{ij} D_j w^k) + \lambda w^k = D_t^{1/2} h^k + D_i g_i^k + f^k \quad \text{in} \,\, \bR^{d+1}.
\]
Given that $w^k, h^k, g_i^k, f^k$ satisfy the assumptions as in \eqref{eq0905_01}, the proof above provides the estimate \eqref{main whole space estimate} for $w^k$.
By recalling \eqref{eq0905_03} and \eqref{eq0905_02}, and letting $k \to \infty$, we finally obtain the estimate \eqref{main whole space estimate} for $u$ when $p \in (2,\infty)$.

To prove the solvability, since we have already proved the estimate \eqref{main whole space estimate}, it is enough to solve
\begin{equation}
							\label{eq0523_03}
u_t - \Delta u + \lambda u = D_t^{1/2}h + D_i g_i + f
\end{equation}
in $\bR^{d+1}$, where $h, g_i, f \in C_0^\infty(\bR^{d+1})$.
Then, one can say that $D_t^{1/2}h + D_i g_i + f \in L_p(\bR^{d+1})$ (see Remark \ref{rem0515_1}), so that by the well-known $L_p$ theory, there exists $u \in L_p(\bR^{d+1})$ such that $u_t, Du, D^2u \in L_p(\bR^{d+1})$ satisfying \eqref{eq0523_03} a.e. in $\bR^{d+1}$.
By Lemma \ref{lem0515_1}, $D_t^{1/2}u \in L_p(\bR^{d+1})$; thus $u \in H_p^{1/2,1}(\bR^{d+1})$, and using Lemma \ref{lem0514_1} (4), $u$ is also a solution to \eqref{eq0523_03} in the sense of \eqref{eq0523_04}.

For the case $p\in (1,2)$, we use the duality argument. Let $q$ be the conjugate exponent of $p$.
Let $\tilde{h}\in L_q(\bR^{d+1})$, $\tilde{g}_i \in L_q(\bR^{d+1})$, and $\tilde{f}\in L_q(\bR^{d+1})$.
By using Theorem \ref{main whole space} for $q\in(2,\infty)$ proved above with the change of variable $t\to -t$, we find a unique solution $\tilde{u}$ of the equation
\[
-\tilde{u}_t-D_j(a_{ij}D_i \tilde{u})+\lambda \tilde{u}=D_t^{1/2}\tilde{h}-D_i\tilde{g_i}+\tilde{f}\quad \text{in} \,\, \bR^{d+1}
\]
with the estimate 
\[
\|\tilde{U}\|_{L_q(\bR^{d+1})}
\le N\|\tilde{F}\|_{L_q(\bR^{d+1})},
\]
where $\tilde{U}$ and $\tilde{F}$ are defined as $U$ and $F$, using $\tilde{u}$, $\tilde{h}$, $\tilde{g}_i$, and $\tilde{f}$.
Note that by Fourier transforms with the help of Remark \ref{rem0506_1},
\[
\int_{\bR^{d+1}} H(D_t^{1/2}\tilde{u}) \, D_t^{1/2} u \, dX = - \int_{\bR^{d+1}} H(D_t^{1/2}u) \, D_t^{1/2} \tilde{u} \, dX.
\]
Then, we see that
\[
\int_{\bR^{d+1}}\tilde{h}D_t^{1/2}u+ \tilde{g_i}D_iu+\tilde{f}u\,dX
\]
\[
=\int_{\bR^{d+1}} {H(D_t^{1/2}\tilde{u}) \, D_t^{1/2}u} \, +a_{ij}D_i\tilde{u}D_ju+\lambda u\tilde{u}\,dX
\]
\[
=\int_{\bR^{d+1}}hD_t^{1/2}\tilde{u}-g_iD_i\tilde{u}+f\tilde{u}\,dX
\le\|\tilde{U}\|_{L_q(\bR^{d+1})}\|F\|_{L_p(\bR^{d+1})}
\]
\[
\le N\|\tilde{F}\|_{L_q(\bR^{d+1})}\|F\|_{L_p(\bR^{d+1})}.
\]
By the duality, we reach that 
\begin{equation*}
\|U\|_{L_p(\bR^{d+1})}\le \|F\|_{L_p(\bR^{d+1})}.
\end{equation*}

The solvability is proved as in the case for $p \in (2,\infty)$.
Finally, Theorem \ref{Interior L_2 solvability} covers the case $p=2$.
This ends the proof.
\end{proof}

\section{Mean oscillation estimates: $x_1$-direction measurable $a_{ij}$, and Proof of Theorem \ref{thm0521_1}}\label{sec x_1}

In this section, we prove Theorem \ref{thm0521_1} under Assumption \ref{assum0521_1}.
We begin with mean oscillation estimates of solutions to homogeneous equations with the operator having coefficients $a_{ij}=a_{ij}(x_1)$, which, in contrast to Section \ref{sec05},  are functions of only $x_1 \in \bR$ without any regularity conditions.
Then, we obtain mean oscillation estimates of solutions to equations when the coefficients $a_{ij}$ are functions of only $x_1 \in \bR$, not on the whole cylinder, but only on $Q_{\kappa r}$ (see Lemma \ref{mean oscillation x_1}).
From this, we obtain mean oscillation estimates for solutions to equations with coefficients satisfying Assumption \ref{assum0521_1} (see Lemma \ref{mean oscillation general coeffi x_1}).
Finally, we conclude the section with a brief proof of Theorem \ref{thm0521_1}.
Throughout the section, set
\[
\cU'=( D'u,\sqrt{\lambda}u) \quad \text{and} \quad \Theta_u:= \sum_{j=1}^d a_{1j}D_j u,
\]
where $D'=(D_2,\dots, D_d)$.

\begin{lemma}
    \label{holder x_1 mble}
   Let $r\in(0,\infty)$, $\kappa \in [2,\infty)$, and $\lambda\ge0$. Assume that $u\in C^\infty_{loc}(\bR^{d+1})$ satisfies 
    \[
    u_t-D_i(a_{ij}(x_1)D_j u)+\lambda u=0 \quad \text{in} \,\, Q_{\kappa r}.
    \]
    Then, we have 
    \[
    (|\cU'-(\cU')_{Q_r}|)_{Q_r}+(|\Theta_u-(\Theta_u)_{Q_r}|)_{Q_r}
    \le 
    N\kappa^{-1/2}(|Du|^2+\lambda|u|^2)_{Q_{\kappa r}}^{1/2}.
    \]
    where $N=N(d,\delta)$.
\end{lemma}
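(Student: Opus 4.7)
The plan is to reduce to $r = 1$ by the parabolic rescaling $u(t, x) \mapsto u(r^2 t, rx)$, and then exploit that $a_{ij}$ depends only on $x_1$: any tangential derivative $\partial_t^\ell D^{\alpha'}_{x'} u$ (with $\alpha'$ a multi-index in $x_2, \ldots, x_d$) is itself a solution of the same homogeneous equation $\cP_\lambda \cdot = 0$ on $Q_\kappa$. In particular, each scalar component of $\cU' = (D'u, \sqrt{\lambda}\, u)$ is a solution of the equation.

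The main workhorse is a standard Caccioppoli estimate: testing the equation for a solution $v$ against $\eta^2 v$ with a cutoff $\eta$ supported in $Q_\kappa$ and identically $1$ on $Q_{\kappa/2}$ gives
\[
\|Dv\|_{L_2(Q_{\kappa/2})} + \sqrt{\lambda}\,\|v\|_{L_2(Q_{\kappa/2})} \le \frac{N}{\kappa}\,\|v\|_{L_2(Q_\kappa)}.
\]
Applied inductively with $v = \partial_t^\ell D^{\alpha'}_{x'} u$ on nested sub-cylinders, and combined with Moser's $L_\infty$-iteration in the tangential directions, this yields pointwise bounds on arbitrarily many tangential derivatives of $u$, each tangential spatial derivative contributing a factor $\kappa^{-1}$. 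For the flux $\Theta_u$, direct differentiation together with the equation yields
\[
D_k \Theta_u = \sum_{j=1}^d a_{1j}(x_1) D_k D_j u \ \ (k \ge 2), \qquad \partial_t \Theta_u = \sum_{j=1}^d a_{1j}(x_1) D_j u_t,
\]
\[
D_1 \Theta_u = u_t + \lambda u - \sum_{i \ge 2}\sum_{j=1}^d a_{ij}(x_1) D_i D_j u,
\]
so every $(t, x)$-derivative of $\Theta_u$ reduces algebraically to a tangential derivative of $u$ or of some $D_j u$ (itself a solution), and is therefore controlled by the previous step.

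The mean oscillation estimate then follows from the pointwise bounds via the elementary inequality $(|f - (f)_{Q_1}|)_{Q_1} \le N\|\partial_t f\|_{L_\infty(Q_1)} + N\|Df\|_{L_\infty(Q_1)}$. The main obstacle is the $x_1$-direction, where $a_{ij}$ is only measurable: one cannot differentiate the equation in $x_1$, so the regularity across $x_1$ must be extracted from the divergence structure via the flux $\Theta_u$ (rather than from $D_1 u$, which need not be any smoother than merely measurable in $x_1$), and one needs in addition a De Giorgi--Nash--Moser-type interior Hölder estimate for each scalar solution to handle the $x_1$-dependence. Tracking the precise $\kappa^{-1/2}$ factor amounts to balancing the Caccioppoli gain of $\kappa^{-1}$ per tangential spatial derivative against the dimensional cost of localizing from $Q_\kappa$ down to $Q_1$; the stated exponent $-1/2$ is not sharp but is enough and most convenient for the downstream mean oscillation machinery.
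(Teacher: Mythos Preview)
The paper does not give a proof but simply cites \cite[Lemma~5.1]{MR2835999}. Your outline supplies the right ingredients---tangential derivatives of $u$ are again solutions, iterated Caccioppoli controls them, and the identity $D_1\Theta_u=u_t+\lambda u-\sum_{i\ge2}a_{ij}D_iD_ju$ reduces the $x_1$-regularity of the flux to tangential quantities---and would yield a version of the lemma with $\kappa^{-\mu}$ in place of $\kappa^{-1/2}$, for some $\mu=\mu(d,\delta)\in(0,1)$. But this does not prove the lemma as stated: the De~Giorgi--Nash--Moser H\"older exponent $\mu$ may be strictly smaller than $1/2$, so the estimate you obtain is \emph{weaker} than $\kappa^{-1/2}$, not ``not sharp'' in the direction you suggest. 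The heuristic about ``balancing Caccioppoli gain against the dimensional cost of localization'' does not identify the correct mechanism either. (For the paper's downstream mean-oscillation argument any fixed negative power of $\kappa$ suffices, so your route would still prove Theorem~\ref{thm0521_1}; it just does not prove this lemma verbatim.)

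The cited reference treats \emph{systems}, for which De~Giorgi--Nash--Moser is unavailable, and obtains the exponent $1/2$ by a different mechanism: one-dimensional Sobolev embedding $H^1\hookrightarrow C^{1/2}$ in the $x_1$-variable. Concretely, for each solution $v$ (a component of $\cU'$) one bounds $\sup_{(t,x')}\|D_1v(t,\cdot,x')\|_{L_2((-1,1))}$ by combining iterated tangential Caccioppoli on cylinders of size $\sim\kappa$ with an anisotropic Sobolev embedding in the $(t,x')$ variables; the scaling of these two steps is precisely what produces $\kappa^{-1/2}$. Together with your formula for $D_1\Theta_u$, the same mechanism handles the flux.
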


\begin{proof}
    We refer to \cite[Lemma 5.1]{MR2835999}.
\end{proof}

Note that in the lemma below, the coefficients $a_{ij}$ are assumed to be $a_{ij}(x_1)$, which are functions of $x_1 \in \bR$, only on $Q_{\kappa r}(X_0)$.

\begin{lemma}\label{mean oscillation x_1}
        Let $r\in(0,\infty)$, $\kappa \in [4,\infty)$, $\lambda\ge0$, and $X_0=(t_0,x_0)\in \bR^{d+1}$. Suppose that $u\in H^{1/2,1}_2(\bR^{d+1})$ satisfies 
    \[
  u_t-D_i(a_{ij}D_ju)+\lambda u=D_t^{1/2}h+D_ig_i+f \quad \text{in} \,\, \bR\times B_{\kappa r}(x_0), 
    \]
    where $h, g_i, f \in L_2(\bR^{d+1})$ with $f \equiv 0$ if $\lambda = 0$, and 
\[
a_{ij}(t,x) = a_{ij}(x_1) \quad \text{in} \,\, Q_{\kappa r}(X_0).
\]
Then, we have 
        \[
 (|\cU'-(\cU')_{Q_r(X_0)}|)_{Q_r(X_0)}+(|\Theta_u-(\Theta_u)_{Q_r(X_0)}|)_{Q_r(X_0)}
 \]\[
    \le N\kappa^{1+d/2}\sum_{j=0}^\infty2^{-j/4}(|F|^2)^{1/2}_{Q_{2^{j/2}\kappa r,\kappa r}(X_0)}+N\kappa^{-1/2} (|\cU|^2)^{1/2}_{Q_{\kappa r}(X_0)},
    \]
    where $N=N(d,\delta)$, where $F$ and $\cU$ are as in \eqref{eq1010_01}.
\end{lemma}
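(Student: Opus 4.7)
The plan is to follow the strategy of the proof of Lemma~\ref{mean oscillation cal U}, replacing the interior H\"older estimate of Lemma~\ref{interior holder lemma} by Lemma~\ref{holder x_1 mble}. By translation I may assume $X_0 = 0$, and by parabolic scaling together with the homogeneity relation $D_t^{1/2}(u(R^2\cdot,R\cdot)) = R\,(D_t^{1/2}u)(R^2\cdot,R\cdot)$ I reduce to $r = 4/\kappa$, so that $Q_{\kappa r} = Q_4$. As in the proof of Lemma~\ref{mean oscillation cal U}, I handle only $\lambda > 0$ and recover $\lambda = 0$ by letting $\lambda \searrow 0$.

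Using Theorem~\ref{Interior L_2 solvability}, I find $w \in \mathring H_2^{1/2,1}(\bR \times B_{\kappa r})$ solving $\cP_\lambda w = D_t^{1/2}h + D_ig_i + f$ in $\bR \times B_{\kappa r}$, where $\cP_\lambda$ uses the original coefficients $a_{ij}$ (which may depend on both $t$ and $x$ outside $Q_{\kappa r}$). Set $v = u - w \in H_2^{1/2,1}(\bR \times B_{\kappa r})$, so that $\cP_\lambda v = 0$ in $\bR \times B_{\kappa r}$. The key observation is that on the subcylinder $Q_{\kappa r} \subset \bR \times B_{\kappa r}$, where by hypothesis $a_{ij}(t,x) = a_{ij}(x_1)$, the function $v$ satisfies exactly the hypothesis of Lemma~\ref{holder x_1 mble}:
\[
v_t - D_i(a_{ij}(x_1) D_j v) + \lambda v = 0 \quad \text{in} \,\, Q_{\kappa r}.
\]

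The $w$-piece is handled as in Lemma~\ref{mean oscillation cal U}: by H\"older's inequality, the comparison $(|\cW|^2)_{Q_r}^{1/2} \le \kappa^{1+d/2}(|\cW|^2)_{Q_{\kappa r}}^{1/2}$, and Lemma~\ref{local L_2 estimate lemma},
\[
(|\cW' - (\cW')_{Q_r}|)_{Q_r} + (|\Theta_w - (\Theta_w)_{Q_r}|)_{Q_r} \le N(|\cW|^2)_{Q_r}^{1/2} \le N\kappa^{1+d/2}\sum_{j=0}^\infty 2^{-j/4}(|F|^2)^{1/2}_{Q_{2^{j/2}\kappa r, \kappa r}},
\]
where I use $|\Theta_w| \le \delta^{-1}|Dw| \le N|\cW|$ on $Q_r \subset Q_{\kappa r}$ (since $a_{ij}$ equals $a_{ij}(x_1)$ there, hence is bounded by $\delta^{-1}$). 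Applying Lemma~\ref{holder x_1 mble} to $v$ yields
\[
(|\cV' - (\cV')_{Q_r}|)_{Q_r} + (|\Theta_v - (\Theta_v)_{Q_r}|)_{Q_r} \le N\kappa^{-1/2}(|\cV|^2)_{Q_{\kappa r}}^{1/2} \le N\kappa^{-1/2}\left((|\cU|^2)_{Q_{\kappa r}}^{1/2} + (|\cW|^2)_{Q_{\kappa r}}^{1/2}\right),
\]
where the last step uses $\cV = \cU - \cW$ and the triangle inequality in $L^2$. Adding the two bounds via $\cU = \cV + \cW$ and $\Theta_u = \Theta_v + \Theta_w$, and controlling the residual $\cW$-contribution once more by Lemma~\ref{local L_2 estimate lemma}, gives the claimed inequality.

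I expect the main technical point to be applying Lemma~\ref{holder x_1 mble}, whose statement presupposes $v \in C^\infty_{loc}$, to a weak solution $v \in H_2^{1/2,1}$. This should follow by a standard mollification/approximation argument exploiting the interior regularity of homogeneous divergence-form parabolic equations with $x_1$-measurable coefficients (cf.\ the proof in \cite{MR2835999} cited in Lemma~\ref{holder x_1 mble}); the nonlocal operator $D_t^{1/2}$ does not interfere here because $v$ solves a \emph{local} homogeneous equation inside $Q_{\kappa r}$, so only interior mollification in $(t,x)$ is needed, with the tail-type considerations of Lemmas~\ref{lem0513_1} and \ref{tail estimate} playing no role.
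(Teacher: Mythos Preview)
Your proposal is correct and follows the same route as the paper: translate/scale to $r=4/\kappa$, $X_0=0$, solve for $w\in\mathring H_2^{1/2,1}(\bR\times B_{\kappa r})$ via Theorem~\ref{Interior L_2 solvability}, set $v=u-w$, control $(\cW',\Theta_w)$ by Lemma~\ref{local L_2 estimate lemma}, and control $(\cV',\Theta_v)$ by Lemma~\ref{holder x_1 mble}.

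The one place where the paper proceeds differently is the justification of the $C^\infty_{loc}$ hypothesis in Lemma~\ref{holder x_1 mble}. Instead of mollifying $v$ or invoking a weak-solution version of that lemma, the paper first reduces---using the a~priori $L_2$-estimate of Theorem~\ref{Interior L_2 solvability}---to the case where $a_{ij}(x_1)$ are smooth and $h,g_i,f\in C_0^\infty(\bR^{d+1})$; then classical parabolic theory gives $v\in C^\infty_{loc}$ directly, and Lemma~\ref{holder x_1 mble} applies as stated. Your suggestion of ``interior mollification in $(t,x)$'' would need more care, since convolving in the $x_1$-variable does not commute with $D_i(a_{ij}(x_1)D_j\,\cdot\,)$; the paper's reduction to smooth coefficients and data sidesteps this cleanly.
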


\begin{proof}
We may assume that $a_{ij}(x_1)$ are smooth.
Thanks to Theorem \ref{Interior L_2 solvability}, we can also assume that $h, g_i, f \in C_0^\infty(\bR^{d+1})$.
By utilizing dilation and translation, we also assume that $r=4/\kappa$ and $X_0=0$.  By taking $\lambda \searrow 0$, we only consider the case $\lambda>0$.  
    By Theorem \ref{Interior L_2 solvability}, we take a unique solution $w\in \mathring H_2^{1/2,1}(\bR\times B_{\kappa r})$ satisfying 
    \[
      w_t-D_i(a_{ij}D_j u)+\lambda u=D_t^{1/2} h+D_ig_i+f \quad \text{in} \,\, \bR\times B_{\kappa r}.
    \]
       Set $v=u-w$, which belongs to $H_2^{1/2,1}(\bR \times B_{\kappa r})$ and satisfies
\[
v_t-D_i(a_{ij}D_j v)+\lambda v = 0 \quad \text{in} \,\, \bR \times B_{\kappa r}.
\]
Denote
    \[
    \cW=(Dw,\sqrt{\lambda}w),
    \quad \cW'=(D'w,\sqrt{\lambda}w),
     \quad  \Theta_w=\sum_{j=1}^d a_{1j}D_jw,\]
    and
      \[
     \cV=(Dv,\sqrt{\lambda}v),
    \quad \cV'=(D'v,\sqrt{\lambda}v), 
    \quad  \Theta_v=\sum_{j=1}^d a_{1j}D_jv.
    \]
 
    As in the proof of Lemma \ref{mean oscillation cal U}, using H\"older's inequality and Lemma \ref{local L_2 estimate lemma}, we have 
    \[
    (|\cW'-(\cW')_{Q_r}|)_{Q_r}+ (|\Theta_w-(\Theta_w)_{Q_r}|)_{Q_r}
       \]
           \begin{equation}
        \label{estimate w whole x_1}
    \le    N (|\cW|^2)_{Q_r}^{1/2}
    \le N\kappa^{1+d/2}\sum_{j=0}^\infty 2^{-j/4}(|F|^2)^{1/2}_{Q_{2^{j/2}\kappa r,\kappa r}}.
         \end{equation}
    By using Lemma \ref{holder x_1 mble} with $r=4/\kappa$ (since $a_{ij}$ are infinitely differentiable and $h, g_i, f \in C_0^\infty(\bR^{d+1})$, by the well-known theory $v \in C_{loc}^\infty(\bR^{d+1)}$), we see that
    \[
    (|\cV'-(\cV')_{Q_r}|)_{Q_r}+ (|\Theta_v-(\Theta_v)_{Q_r}|)_{Q_r}
    \le 
    N\kappa^{-1/2}(|\cV|^2)_{Q_{\kappa r}}^{1/2}
    \]
    \begin{equation}
        \label{estiamte v whole x_1}
        \le 
    N\kappa^{-1/2}\sum_{j=0}^\infty2^{-j/4} (|F|^2)^{1/2}_{Q_{2^{j/2}\kappa r,\kappa r}}+N\kappa^{-1/2} (|\cU|^2)^{1/2}_{Q_{\kappa r}},
        \end{equation}
where we use $v=u-w$ and Lemma \ref{local L_2 estimate lemma} to get the last inequality.
Combining the estimates \eqref{estimate w whole x_1} and \eqref{estiamte v whole x_1} gives
\[
(|\cU'-(\cU')_{Q_r}|)_{Q_r}+(|\Theta_u-(\Theta_u)_{Q_r}|)_{Q_r}
\]
\[
\le N\kappa^{1+d/2}\sum_{j=0}^\infty 2^{-j/4}(|F|^2)^{1/2}_{Q_{2^{j/2}\kappa r,\kappa r}}
  +N\kappa^{-1/2} (|\cU|^2)^{1/2}_{Q_{\kappa r}}.
    \]
The lemma is proved.
\end{proof}

Now, we obtain mean oscillation estimates for the operator,
\[\cP_\lambda u=u_t-D_i(a_{ij}D_j u)+\lambda u,\]
where the coefficients satisfy Assumption \ref{assum0521_1}.
As in the proof of Lemma \ref{mean oscillation general coeffi}, we use the method of freezing coefficients.
However, unlike in the proof of Lemma \ref{mean oscillation general coeffi}, the freezing coefficients are $a_{ij}(t,x_1)$, which are measurable in $x_1 \in \bR$ and piecewise constant in $t \in \bR$. 
Here, the constant $R_0$ is from Assumption \ref{assum0521_1}.
 
\begin{lemma}\label{mean oscillation general coeffi x_1}
Let $\nu\in (2,\infty)$, $\nu'=2\nu/(\nu-2)$, $r\in(0,R_0/\kappa]$, $\kappa \in [4,\infty)$, $\lambda\ge0$, and $X_0=(t_0,x_0)\in \bR^{d+1}$.
Assume that $u \in H_{\nu}^{1/2,1}(\bR^{d+1}) \cap H_2^{1/2,1}(\bR^{d+1})$ satisfies 
    \[
    \cP_\lambda u=D_t^{1/2}h+D_ig_i+f \quad \text{in} \,\, \bR^{d+1}, 
    \]
    where $h, g_i, f \in L_\nu(\bR^{d+1}) \cap L_2(\bR^{d+1})$ with $f \equiv 0$ if $\lambda = 0$.
    Then,
    under Assumption \ref{assum0521_1} ($\gamma$), there exists a function $\cU^Q$ on $Q:=Q_r(X_0)$ such that
\begin{equation}
							\label{eq1010_03}
N^{-1}(|Du| + \sqrt{\lambda}|u|) \le |\cU^Q| \le N(|Du| + \sqrt{\lambda}|u|) \quad \text{on}\,\, Q
\end{equation}
and
\begin{multline*}
(|\cU^Q-(\cU^Q)_{Q_r(X_0)}|)_{Q_r(X_0)}\le N\kappa^{1+d/2}\sum_{j=0}^\infty2^{-j/4}(|F|^2)^{1/2}_{Q_{2^{j/2}\kappa r,\kappa r}(X_0)}
\\
+N\kappa^{-1/2} (|\cU|^2)^{1/2}_{Q_{\kappa r}(X_0)}+N\gamma^{1/\nu'}\kappa^{1+d/2}\sum_{j=0}^\infty2^{-j/4}(|Du|^\nu)_{Q_{2^{j/2}R,R}(X_0)}^{1/\nu},
\end{multline*}
where $N=N(d,\delta,\nu)$, where $F$ and $\cU$ are as in \eqref{eq1010_01}. 
\end{lemma}

\begin{proof}
Let $R=\kappa r$. We may assume that $X_0=0$.
Let $\tau_k = 2kR^2$, $k=0,1,2,\ldots$, and
\[
\bar{a}_{ij}^k(x_1) = \dashint_{Q_R'(\tau_k,0)} a_{ij}(s,x_1,y') \, dy' \, ds.
\]
Then, we set
\[
\bar{a}_{ij}(t,x_1) = \bar{a}^k_{ij}(x_1) \quad \text{for} \,\, t \in (\tau_k-R^2, \tau_k+R^2]
\]
and
\[
\overline{\cP_\lambda}u=u_t-D_i(\bar{a}_{ij}(t, x_1)D_j u)+\lambda u.
\]
We have 
\begin{equation}
        \label{bar eq x_1}
    \overline{\cP_\lambda}u=D_t^{1/2}h+D_i\tilde{g}_i+h \quad \text{in} \,\, \bR^{d+1}, 
        \end{equation}
    where 
    \[
    \tilde{g}_i=g_i+(a_{ij}-\bar{a}_{ij})D_j u.
    \]
Notice that
\[
\bar{a}_{ij}(t,x_1)= {\bar{a}_{ij}^0(x_1)} \quad \text{in} \,\, Q_{R}=Q_{\kappa r}.
\]
By applying Lemma \ref{mean oscillation x_1} to $u$ with the equation \eqref{bar eq x_1}, we have 
    \[
    (|\cU'-(\cU')_{Q_r}|)_{Q_r}+ (|\Theta_u-(\Theta_u)_{Q_r}|)_{Q_r}
    \le N\kappa^{1+d/2}\sum_{l=0}^\infty2^{-l/4}(|F|^2)^{1/2}_{Q_{2^{l/2}R,R}}
    \]
    \[
+N\kappa^{-1/2} (|\cU|^2)^{1/2}_{Q_R}
    +N\kappa^{1+d/2}\sum_{l=0}^\infty2^{-l/4}(|a_{ij}-\bar{a}_{ij}|^2|Du|^2)^{1/2}_{Q_{2^{l/2}R,R}}.
    \]
By H\"older's inequality, $|a_{ij}|\le \delta^{-1}$, Assumption \ref{assum0521_1} ($\gamma$), and the choice of $\bar{a}_{ij}$,  it follows that
\[
(|a_{ij}-\bar{a}_{ij}|^2|Du|^2)_{Q_{2^{l/2}R,R}}^{1/2}
\le (|a_{ij}-\bar{a}_{ij}|^{\nu'})_{Q_{2^{l/2}R,R}}^{1/\nu'}
(|Du|^\nu)_{Q_{2^{l/2}R,R}}^{1/\nu}
 \]
\[
\le  N(|a_{ij}-\bar{a}_{ij}|)_{Q_{2^{l/2}R,R}}^{1/\nu'}
(|Du|^\nu)_{Q_{2^{l/2}R,R}}^{1/\nu} \leq N\gamma^{1/\nu'}(|Du|^\nu)_{Q_{2^{l/2}R,R}}^{1/\nu}
\]
for any $\ell = 0, 1,\ldots$, where for the last inequality, see Remark \ref{rem0522_02}.

From this, we obtain that 
\[
     (|\cU'-(\cU')_{Q_r}|)_{Q_r}+ (|\Theta_u-(\Theta_u)_{Q_r}|)_{Q_r}
    \le N\kappa^{1+d/2}\sum_{j=0}^\infty2^{-j/4}(|F|^2)^{1/2}_{Q_{2^{j/2}\kappa r,\kappa r}}
    \]
    \[+N\kappa^{-1/2} (|\cU|^2)^{1/2}_{Q_{\kappa r}}
    +N\gamma^{1/\nu'}\kappa^{1+d/2}\sum_{j=0}^\infty2^{-j/4}(|Du|^\nu)_{Q_{2^{j/2}R,R}}^{1/\nu}.
    \]
    We set $\cU^Q=(\cU',\Theta_u)$. Then by the ellipticity condition, we have \eqref{eq1010_03}.
This ends the proof.
\end{proof}

We now prove Theorem \ref{thm0521_1}.
\begin{proof}[Proof of Theorem \ref{thm0521_1}]
Essentially, using Lemma \ref{mean oscillation general coeffi x_1} instead of \ref{mean oscillation general coeffi}, the proof remains unchanged from that of Theorem \ref{main whole space}.
It is worth noting that we use a version of the Fefferman-Stein sharp function theorem, \cite[Corollary 2.8]{MR3812104}, to obtain the $L_p$-estimate of $Du$ and $\sqrt{\lambda}u$.
\end{proof}

\bibliographystyle{plain}

\def\cprime{$'$}

\end{document}